\documentclass[a4paper, 11pt]{amsart}
\usepackage[T1]{fontenc}
\usepackage{amssymb,amsthm,amsmath}
\usepackage{enumerate}
\usepackage{eucal}

\newtheorem{thm}{Theorem}[section]
\newtheorem{prop}[thm]{Proposition}
\newtheorem{lem}[thm]{Lemma}
\newtheorem{cor}[thm]{Corollary}

\newtheorem{fact}[thm]{Fact}

\theoremstyle{definition}
\newtheorem{defi}[thm]{Definition}
\newtheorem{exam}[thm]{Example}
\newtheorem{rem}[thm]{Remark}

\numberwithin{equation}{section}
\newcommand{\mb}{\mathbb}

\DeclareMathOperator{\topo}{top}
\DeclareMathOperator{\bor}{bor}

\newcommand{\lss}{\mathbf{LSS}}

\newcommand{\ads}{\mathbf{ADS}}
\newcommand{\ds}{\mathbf{DS}}
\newcommand{\lds}{\mathbf{LDS}}

\def\mc{\mathcal}
\def\mb{\mathbb}

\begin{document}
\title[Locally small spaces]
{Locally small spaces with an application}
\author[A. Pi\k{e}kosz]{Artur Pi\k{e}kosz}
\address{Institute of Mathematics, Cracow University of Technology, Warszawska 24, 31-155 Krak\'ow,  Poland}
\email{pupiekos@cyfronet.pl}

\thanks{}

\subjclass[2010]{Primary:  54A05, 18F10. Secondary: 54B30, 54E99, 18B99.}

\date{\today}

\begin{abstract}
We develop the theory of  locally small spaces in a new simple language
and apply this simplification  to re-build the theory of locally definable spaces over
 structures with topologies.

{\bf Keywords:}  locally small space; generalized topology; Grothendieck topology; locally definable space.
\end{abstract}

\maketitle

\section{Introduction.}
Topological structures play a very important role in mathematics. Today the classical topology (whose fathers were, among others,  H. Poincar\'{e}, F. Hausdorff and K. Kuratowski)  is known by every mathematician.  
Generalizations of the classical notion of a topological space have been invented in many directions by E. \v{C}ech, H. Herrlich, M. Kat\v{e}tov, F. Riesz and many others.

Grothendieck topologies, introduced in the beginning of the 1960s by A. Grothendieck (see \cite{A} and \cite{AGV}), use the language of category theory, but still are loosely considered as generalizations of classical topologies. They became very   fruitful in algebraic geometry and rigid analytic geometry.
S. Bosch, U. G\"{u}ntzer and R. Remmert (\cite{BGR}) used concrete Grothen\-dieck topologies (called by them $G$-topologies).  A $G$-topology on a set $X$ consists of a family of admissible open subsets of $X$ and a family of admissible coverings for each such admissible open set that satisfy  several conditions (\cite{BGR}, Definition 1 in Subsection 9.1.1).
Sometimes, three additional conditions $(G_0)$, $(G_1)$, $(G_2)$ are assumed (\cite{BGR}, Subsection 9.1.2, page 339).

H. Delfs and M. Knebusch (\cite{DK}) added to the list of required properties two important ones: 
open (admissible) sets should be closed under finite unions and all finite coverings of open (admissible) sets should be admissible. This way, they introduced their generalized topological spaces. 
Then they developed a semialgebraic version of homotopy theory, which was extended by A. Pi\k{e}kosz (\cite{Pie1}) to a homotopy theory over  o-minimal  expansions of  fields. 

\'{A}. Cs\'{a}sz\'{a}r,  besides his syntopogeneous structures,  introduced another 
notion of a  generalized topology (\cite{C}), where the family of open sets 
satisfies some, but not all, conditions required for a topology.

We define  a locally small space in a similar way: the distinguished family
of subsets of the underlying set is required to satisfy some conditions a topology satisfies.
Still our locally small spaces can be seen as a special kind of Delfs-Knebusch generalized topological spaces, so, in particular, a special kind of
sets with $G$-topologies on them.
In this paper, a {locally small space} is a pair $(X,\mc{L}_X)$, where $X$ is any set and 
$\mc{L}_X$ is a subfamily of the powerset $\mc{P}(X)$ closed under finite unions and finite intersections, containing the empty set $\emptyset$ and  covering $X$.

In Section 2, we develop the theory of locally small spaces in this simplified language.
The main results are Theorems \ref{1ki}, \ref{subsp},  \ref{2ki}, \ref{refl} and \ref{korefl}. We also give two generalizations of facts known from \cite{DK} in Theorems 
\ref{cpar} and \ref{stlind}.  
As an application, in Section 3, the categories of locally definable spaces over structures with topologies are re-built using the new language of locally small spaces.
This means that all results of the monograph \cite{DK} and of the paper \cite{Pie1} about locally definable spaces stay valid when our approach replaces the approach of H. Delfs and M. Knebusch with a longish and complicated definition of a generalized topological space.   

\textbf{Notation.} For a set $X$, its powerset is denoted by $\mc{P}(X)$.
 
We shall use a special notation for operations on families of sets, for example for family intersection
$$ \mc{U} \cap_1 \mc{V} = \{ U \cap  V : U \in \mc{U}, V \in \mc{V} \},$$ 
and sometimes for families of families, namely
$$ \Phi \cap_2 \Psi = \{ \mc{U} \cap_1 \mc{V} : \mc{U} \in \Phi, \mc{V} \in \Psi \}.$$

\section{Locally small spaces}
\subsection{Basic definitions and examples}
\begin{defi}\label{lss}
A \textbf{locally small space} $\mc{X}$  is a pair $(X,\mc{L}_X)$, where $X$ is any set and  $\mc{L}_X\subseteq  \mc{P}(X)$ satisfies the following conditions:\\
(LS1) $\emptyset \in \mc{L}_X$,\\
(LS2) if $L,M \in \mc{L}_X$, then $L\cap M, L\cup M \in \mc{L}_X$,\\
(LS3) $\forall x \in X \: \exists L_x \in \mc{L}_X  \:  x\in L_x$ (i. e. $\bigcup \mc{L}_X = X$).

Elements of $\mc{L}_X$ are called \textbf{small open} subsets of $X$, \textbf{smops} for short. (Locally small spaces will sometimes be shortly  called spaces.) 
\end{defi}

\begin{defi}
Let a family $\mc{A}$ of subsets of $X$ be given.  
Define a new family
$$\mc{A}^o=\{ Y\subseteq X \: |\:  Y \cap_1 \mc{A} \subseteq \mc{A} \}.$$
Elements of $\mc{A}^o$ will be called the sets \textbf{compatible with} the family 
$\mc{A}$. 
\end{defi}

\begin{lem}
Assume a family $\mc{A}\subseteq \mc{P}(X)$ is closed under finite intersections (i. e.  $\mc{A} \cap_1 \mc{A} \subseteq \mc{A}$). Then  
$$  \mc{A}\subseteq \mc{A}^o \mbox{ and }    \mc{A}^{oo}=\mc{A}^o. $$
\end{lem}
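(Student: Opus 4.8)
The plan is to establish the two assertions separately, proving the equality $\mc{A}^{oo} = \mc{A}^o$ by a double inclusion that bootstraps from the first assertion together with the observation that $\mc{A}^o$ is itself closed under finite intersections.

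First I would verify $\mc{A} \subseteq \mc{A}^o$. Unwinding the definition, a set $A$ lies in $\mc{A}^o$ precisely when $A \cap B \in \mc{A}$ for every $B \in \mc{A}$. For $A \in \mc{A}$ this is exactly the hypothesis $\mc{A} \cap_1 \mc{A} \subseteq \mc{A}$, so $A \in \mc{A}^o$, and the first inclusion follows at once.

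The key intermediate step is that $\mc{A}^o$ is again closed under finite intersections. Given $Y_1, Y_2 \in \mc{A}^o$ and $A \in \mc{A}$, I would rewrite $(Y_1 \cap Y_2) \cap A = Y_1 \cap (Y_2 \cap A)$: since $Y_2 \in \mc{A}^o$ gives $Y_2 \cap A \in \mc{A}$, and then $Y_1 \in \mc{A}^o$ gives $Y_1 \cap (Y_2 \cap A) \in \mc{A}$, I conclude $Y_1 \cap Y_2 \in \mc{A}^o$. Applying the already-proved first assertion with $\mc{A}^o$ in place of $\mc{A}$ (legitimate now that $\mc{A}^o$ is closed under finite intersections) then yields $\mc{A}^o \subseteq (\mc{A}^o)^o = \mc{A}^{oo}$.

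For the reverse inclusion $\mc{A}^{oo} \subseteq \mc{A}^o$, I would take $Y \in \mc{A}^{oo}$ and $A \in \mc{A}$ and show $Y \cap A \in \mc{A}$. Since $\mc{A} \subseteq \mc{A}^o$, we have $A \in \mc{A}^o$, so $Y \in (\mc{A}^o)^o$ forces $Y \cap A \in \mc{A}^o$. The decisive trick, which I expect to be the only nonroutine point, is to feed $A$ itself back into the membership condition for $Y \cap A \in \mc{A}^o$: this condition requires $(Y \cap A) \cap B \in \mc{A}$ for all $B \in \mc{A}$, and choosing $B = A$ collapses $(Y \cap A) \cap A$ to $Y \cap A$ by idempotence of intersection, giving $Y \cap A \in \mc{A}$. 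Hence $Y \in \mc{A}^o$, which completes the double inclusion and the proof.
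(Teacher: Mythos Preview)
Your proof is correct and follows essentially the same approach as the paper's: both use the idempotence trick $(Y\cap A)\cap A = Y\cap A$ for the inclusion $\mc{A}^{oo}\subseteq \mc{A}^o$, and both rely on the fact that $\mc{A}^o$ is closed under binary intersections for the reverse inclusion. The only difference is cosmetic: you isolate that closure property as an explicit intermediate lemma and then invoke the first assertion with $\mc{A}^o$ in place of $\mc{A}$, whereas the paper simply asserts ``Then $W\cap B\in\mc{A}^o$'' and leaves the one-line verification implicit.
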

\begin{proof} The first part in obvious.

Assume $W\in \mc{A}^{oo}$. Take $A\in \mc{A}$. Then $W\cap A\in \mc{A}^{o}$.
But $W\cap A=(W\cap A) \cap A \in \mc{A}$. This means $W\in \mc{A}^{o}$.

Now assume $W\in \mc{A}^{o}$. Take any $B\in \mc{A}^{o}$. Then $W\cap B \in \mc{A}^{o}$.  This means $W\in \mc{A}^{oo}$.
\end{proof}

From now on, we assume that a locally small space  $(X, \mc{L}_X)$ is given.
\begin{defi}
The family $\mc{L}^{o}_X$ of all \textbf{open} subsets of $X$ is the family of all subsets of $X$ compatible with smops:
$$\mc{L}^{o}_X=\{ V \subseteq X \: | \: V \cap_1 \mc{L}_X \subseteq \mc{L}_X \}.$$
\end{defi}

\begin{rem}
By (LS2),  we always have  $\mc{L}_X\subseteq \mc{L}^o_X$.
\end{rem}

\begin{defi}[cf. \cite{AHS}, Ex. 22.2(2) and \cite{Pie3}, Def.  2.2.29]
A \textbf{bornology} on a set $X$ is a family $\mc{B}\subseteq \mc{P}(X)$ such that:
\begin{enumerate}
\item if $A,B \in \mc{B}$, then $A \cup B \in \mc{B}$,
\item if $B \in \mc{B}$ and $A\subseteq B$, then $A \in \mc{B}$.
\end{enumerate}
\end{defi}

\begin{defi}
The family  $\mc{L}^s_X$  of all \textbf{small} subsets of $X$  is the smallest bornology on $X$ containing $\mc{L}_X$:
$$ \mc{L}^s_X=\bor(\mc{L}_X)=  \{ B\subseteq X \: |\:  B\subseteq L \mbox{ for some }L \in \mc{L}_X \}.$$ 
\end{defi}

\begin{prop}
We have $\mc{L}_X = \mc{L}^s_X \cap \mc{L}^o_X$.
\end{prop}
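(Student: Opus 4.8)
The plan is to prove the two inclusions $\mc{L}_X \subseteq \mc{L}^s_X \cap \mc{L}^o_X$ and $\mc{L}^s_X \cap \mc{L}^o_X \subseteq \mc{L}_X$ separately. The first inclusion is essentially immediate from the definitions: if $L \in \mc{L}_X$, then $L \in \mc{L}^o_X$ by the Remark (which records that (LS2) gives $\mc{L}_X \subseteq \mc{L}^o_X$), and $L \in \mc{L}^s_X$ because $L \subseteq L$ witnesses that $L$ is small. So $L \in \mc{L}^s_X \cap \mc{L}^o_X$.

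For the reverse inclusion, I would take an arbitrary $V \in \mc{L}^s_X \cap \mc{L}^o_X$ and show $V \in \mc{L}_X$. Since $V \in \mc{L}^s_X$, there is some $L \in \mc{L}_X$ with $V \subseteq L$. Since $V \in \mc{L}^o_X$, by definition $V$ is compatible with smops, meaning $V \cap_1 \mc{L}_X \subseteq \mc{L}_X$; in particular $V \cap L \in \mc{L}_X$. The key observation is then that $V \cap L = V$, because $V \subseteq L$. Hence $V = V \cap L \in \mc{L}_X$, which is exactly what we need.

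The main obstacle, if any, is simply making sure the two defining properties are invoked in the right combination: smallness supplies a containing smop $L$, and openness (compatibility) is precisely what upgrades the intersection $V \cap L$ back into $\mc{L}_X$. The interplay is that smallness gives us a \emph{witness} $L$ to intersect against, and compatibility then guarantees that intersecting $V$ with that particular witness lands inside $\mc{L}_X$; the containment $V \subseteq L$ finally identifies this intersection with $V$ itself. No finite-union or finite-intersection closure beyond what is already packaged in the definition of $\mc{L}^o_X$ is needed, so the argument is short and requires no case analysis.
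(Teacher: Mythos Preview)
Your proof is correct and follows essentially the same approach as the paper: both establish the easy inclusion $\mc{L}_X \subseteq \mc{L}^s_X \cap \mc{L}^o_X$ directly from the definitions, and for the reverse inclusion both pick a witness $W \in \mc{L}_X$ with $V \subseteq W$ from smallness and then use compatibility to conclude $V = V \cap W \in \mc{L}^o_X \cap_1 \mc{L}_X \subseteq \mc{L}_X$. The only difference is notational (the paper calls the witness $W$ rather than $L$).
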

\begin{proof}
Clearly, $\mc{L}_X \subseteq \mc{L}^s_X$ and  $\mc{L}_X \subseteq \mc{L}^o_X$.
Assume $V \in  \mc{L}^s_X \cap \mc{L}^o_X$. Then there exists some $W \in \mc{L}_X$ such that $V \subseteq W$, hence $V=V\cap W \in \mc{L}^o_X \cap_1 \mc{L}_X \subseteq \mc{L}_X$.
\end{proof}

\begin{defi}
The family  $\mc{L}^{wo}_X$ of all \textbf{weakly open}  subsets of $X$ is the smallest topology on $X$ containing 
$ \mc{L}_X $, so is given by the formula
$$  \mc{L}^{wo}_X = \topo(\mc{L}_X)=\{ \bigcup \mc{U} : \mc{U} \subseteq  \mc{L}_X  \}.$$ 
\end{defi}

\begin{prop}
We always have $\mc{L}^o_X \subseteq \mc{L}^{wo}_X$. 
\end{prop}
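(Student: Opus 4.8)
The plan is to take an arbitrary open set $V \in \mc{L}^o_X$ and exhibit it directly as a union of smops, which is exactly what membership in $\mc{L}^{wo}_X = \topo(\mc{L}_X)$ requires. The natural covering of $V$ to try is the one obtained by intersecting $V$ with every smop, so I would set $\mc{U} = V \cap_1 \mc{L}_X = \{ V \cap L : L \in \mc{L}_X \}$ and aim to show both $\mc{U} \subseteq \mc{L}_X$ and $\bigcup \mc{U} = V$.

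The first inclusion is immediate from the definition of compatibility: since $V \in \mc{L}^o_X$ means precisely $V \cap_1 \mc{L}_X \subseteq \mc{L}_X$, each member $V \cap L$ of $\mc{U}$ is a smop, so $\mc{U} \subseteq \mc{L}_X$. For the equality of unions I would use distributivity of intersection over arbitrary union together with the covering axiom (LS3):
$$ \bigcup \mc{U} = \bigcup_{L \in \mc{L}_X} (V \cap L) = V \cap \bigcup \mc{L}_X = V \cap X = V. $$
This presents $V$ as the union of the subfamily $\mc{U}$ of $\mc{L}_X$, hence $V \in \topo(\mc{L}_X) = \mc{L}^{wo}_X$, and since $V$ was an arbitrary open set this gives the desired inclusion $\mc{L}^o_X \subseteq \mc{L}^{wo}_X$.

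There is no real obstacle here; the only point that must not be overlooked is that the recovery $\bigcup \mc{U} = V$ rests on (LS3). Without the covering property $\bigcup \mc{L}_X = X$, the computation would only yield $V \cap \bigcup \mc{L}_X$, which could be a proper subset of $V$; it is precisely because the smops cover $X$ that intersecting $V$ with all of them reconstructs $V$ on the nose.
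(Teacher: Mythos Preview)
Your proof is correct and follows exactly the same approach as the paper: exhibit $V$ as the union of the family $V \cap_1 \mc{L}_X \subseteq \mc{L}_X$. Your version is simply more explicit, spelling out the role of (LS3) in ensuring $\bigcup(V \cap_1 \mc{L}_X) = V$, which the paper's one-line proof leaves implicit.
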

\begin{proof}
If $V  \in  \mc{L}^o_X$, then $V$ is the union of the family $V\cap_1 \mc{L}_X$, which is a subfamily of  $\mc{L}_X$, so  $V\in \mc{L}^{wo}_X$.
\end{proof}

\begin{defi}
The family of all \textbf{small weakly open} subsets of $X$ is the family
$$\mc{L}^{swo}_X =  \mc{L}^{wo}_X \cap \mc{L}^s_X .$$
\end{defi}

\begin{prop} We always have $\mc{L}_X \subseteq  \mc{L}^{swo}_X$.
\end{prop}
\begin{proof}
We have $\mc{L}_X = \mc{L}^s_X \cap \mc{L}^o_X \subseteq  \mc{L}^s_X \cap \mc{L}^{wo}_X = \mc{L}^{swo}_X$.
\end{proof}

\begin{defi}[cf. \cite{Pie2}, Def. 2.2.8]
The \textbf{weak closure} $wcl(Y)$ of a set $Y\subseteq X$ is the closure of $Y$ in the topological space $(X,\mc{L}_X^{wo})$.
\end{defi}

Many of generalized topological spaces in the sense of Delfs and Knebusch on the real line mentioned in Definition 1.2 of \cite{PW} are locally small. 
We can restate their definitions to get locally small spaces in the sense of Definition \ref{lss} above.
\begin{exam}\label{real-lines}
 For $X=\mathbb{R}$, we can take as the family of smops any of the following families (where the words ``locally'' and ``bounded'' are understood traditionally):\\
(1) $\mc{L}_{om}=$ the finite unions of open intervals  
(we get $\mathbb{R}_{om}$), \\
(2) $\mc{L}_{rom}=$ the finite unions of open intervals with rational endpoints  (we get $\mathbb{R}_{rom}$), \\
(3)   $\mc{L}_{lom}=$ the finite unions of bounded open intervals  (we get $\mathbb{R}_{lom}$),\\
(4)  $\mc{L}_{l^+om}=$ the finite unions of bounded from above open intervals (we get $\mathbb{R}_{l^+om}$),\\
(5)   $\mc{L}_{slom}=$ the locally finite unions of bounded open intervals  (we get $\mathbb{R}_{slom}$),\\
(6)   $\mc{L}_{sl^+om}=$ the locally  finite unions of bounded open intervals that are finite unions on the negative halfline  (we get $\mathbb{R}_{sl^+om}$),\\
(7) $\mc{L}_{st}= $ the natural topology (we get  $\mathbb{R}_{st}$) ,\\
(8)   $\mc{L}_{lst}= $ the bounded sets from the natural topology  (we get  $\mathbb{R}_{lst}$),\\
(9)  $\mc{L}_{l^+st}=$ the bounded from above sets from the natural topology  (we get $\mathbb{R}_{l^+st}$).

On the other hand, the space $\mb{R}_{ut}$ from Definition 1.2 of \cite{PW} is not locally small.
\end{exam}

We shall give a definition of a locally small (Delfs-Knebusch) generalized topological space in Subsection 2.5.

\subsection{Admissible families}
\begin{defi}
Any subfamily  of $\mc{L}_X^o$  will be called an \textbf{open family}.

We say that an open family  $\mc{U}$ is:
\begin{enumerate}
\item[(a)] \textbf{essentially finite} if some finite subfamily $\mc{U}_f \subseteq \mc{U}$ covers the union of~$\mc{U}$ (i. e. $\bigcup \mc{U} = \bigcup \mc{U}_f$), 
\item[(b)] \textbf{locally finite} if each member of $ \mc{L}_X$ has a non-empty intersection with  only finitely many members of $\mc{U}$, 
\item[(c)]   \textbf{admissible}  (or  \textbf{locally essentially finite}) if for each $L\in \mc{L}_X$ there exists a finite subfamily $\mc{U}_L\subseteq \mc{U}$ such that $(\bigcup \mc{U}) \cap L = (\bigcup \mc{U}_L) \cap L$.
\end{enumerate}
\end{defi}

\begin{rem}
Notice that the word ``locally'' has a special meaning in the theory of locally small spaces. 
\end{rem}

\begin{prop}[cf. \cite{Pie3}, Cor. 2.1.19]
Each locally finite open family  in a locally small space  is admissible.
\end{prop}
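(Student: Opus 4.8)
The plan is to show that local finiteness implies admissibility by directly producing, for each smop $L \in \mc{L}_X$, the required finite subfamily $\mc{U}_L$ witnessing $(\bigcup \mc{U}) \cap L = (\bigcup \mc{U}_L) \cap L$. Let $\mc{U}$ be a locally finite open family and fix $L \in \mc{L}_X$. By the definition of local finiteness, only finitely many members of $\mc{U}$ meet $L$; I would collect exactly these into the subfamily
$$ \mc{U}_L = \{ U \in \mc{U} : U \cap L \neq \emptyset \}, $$
which is finite by hypothesis.

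The heart of the argument is the set-theoretic identity $(\bigcup \mc{U}) \cap L = (\bigcup \mc{U}_L) \cap L$. The inclusion $\supseteq$ is immediate since $\mc{U}_L \subseteq \mc{U}$. For $\subseteq$, I would take a point $x \in (\bigcup \mc{U}) \cap L$; then $x \in U$ for some $U \in \mc{U}$, and since $x \in L$ as well we have $U \cap L \neq \emptyset$, so by construction $U \in \mc{U}_L$, giving $x \in (\bigcup \mc{U}_L) \cap L$. This establishes the desired equality, and since $L \in \mc{L}_X$ was arbitrary, $\mc{U}$ is admissible.

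I do not anticipate a serious obstacle here: the proof is essentially an unwinding of the two definitions, and the only thing to be careful about is that the witnessing subfamily is chosen to consist precisely of those members meeting $L$, which local finiteness guarantees is finite. One subtlety worth a remark is that $\mc{U}$ being an open family (a subfamily of $\mc{L}_X^o$) plays no role in the computation itself; the equality of sections is purely combinatorial, and the open-family hypothesis only situates the statement inside the correct ambient context of admissible families. Thus the single substantive input is the finiteness clause of local finiteness, and everything else is a routine verification.
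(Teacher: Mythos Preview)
Your proof is correct and follows exactly the paper's approach: the paper also takes $\mc{U}_L=\{ V \in \mc{U} : V \cap L \neq \emptyset \}$ and leaves the verification implicit. You have simply spelled out the routine details the paper omits.
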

\begin{proof}
For $\mc{U}$ locally finite and $L\in \mc{L}_X$, take $\mc{U}_L=\{ V \in \mc{U} : V \cap L \neq \emptyset \}$.
\end{proof}

\begin{prop}
The family  $\mc{L}_X$ is admissible in $(X, \mc{L}_X)$.
\end{prop}
\begin{proof}
For $\mc{U}=\mc{L}_X$ and $L\in \mc{L}_X$, take $\mc{U}_L=\{ L\}$.
\end{proof}

\begin{exam}
In the space $\mb{R}_{om}=(\mb{R},\mc{L}_{om})$ from Example \ref{real-lines}, 
the open and pairwise disjoint  family $\mc{U}=\{(k,k+1): k \in \mb{Z} \}$ is neither locally finite nor admissible  and its union is not open (but only weakly open).
\end{exam}

\subsection{Smallness and compactness}

\begin{thm} 
For a locally small space $(X,\mc{L}_X)$, the following conditions are equivalent:
\begin{enumerate}
\item[$(1)$]  $X\in \mc{L}_X$, 
\item[$(2)$]  $\mc{L}^s_X=\mc{P}(X)$, 
\item[$(3)$]  $\mc{L}_X = \mc{L}^o_X$, 
\item[$(4)$] \: each admissible family is essentially finite, 
\item[$(5)$] \: each admissible covering of $X$ is essentially finite.
\end{enumerate}
\end{thm}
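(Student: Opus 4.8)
The plan is to prove the equivalence of the five conditions by establishing a cycle of implications, with the condition $X \in \mc{L}_X$ serving as the natural hub. The most direct route seems to be to show $(1)\Rightarrow(2)\Rightarrow(3)\Rightarrow(1)$ as one loop (tying together the three ``structural'' conditions about smops), and then to weave in the two ``covering'' conditions $(4)$ and $(5)$ via implications like $(1)\Rightarrow(4)\Rightarrow(5)\Rightarrow(1)$. Before starting, I would note that each single condition is intuitively a way of saying ``$X$ itself is small'', so the whole theorem is really the statement that smallness of the total space can be detected in several equivalent ways.

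First I would handle the easy structural part. For $(1)\Rightarrow(2)$: if $X \in \mc{L}_X$, then every subset of $X$ is contained in $X \in \mc{L}_X$, so by the definition of $\mc{L}^s_X = \bor(\mc{L}_X)$ every subset is small, giving $\mc{L}^s_X = \mc{P}(X)$. For $(2)\Rightarrow(3)$: recall the earlier Proposition $\mc{L}_X = \mc{L}^s_X \cap \mc{L}^o_X$; if $\mc{L}^s_X = \mc{P}(X)$ then $\mc{L}_X = \mc{P}(X) \cap \mc{L}^o_X = \mc{L}^o_X$. For $(3)\Rightarrow(1)$: the full set $X$ is always compatible with smops (since $X \cap_1 \mc{L}_X = \mc{L}_X \subseteq \mc{L}_X$), so $X \in \mc{L}^o_X$; if $\mc{L}_X = \mc{L}^o_X$, then $X \in \mc{L}_X$. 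This closes the first loop cleanly using only the definitions and the quoted Proposition.

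Next I would connect the covering conditions. For $(1)\Rightarrow(4)$: assume $X \in \mc{L}_X$ and let $\mc{U}$ be admissible. Applying admissibility to the distinguished smop $L = X$ yields a finite subfamily $\mc{U}_X \subseteq \mc{U}$ with $(\bigcup \mc{U}) \cap X = (\bigcup \mc{U}_X) \cap X$; since both unions lie in $X$, this says $\bigcup \mc{U} = \bigcup \mc{U}_X$, which is exactly essential finiteness. The implication $(4)\Rightarrow(5)$ is immediate, since an admissible covering of $X$ is in particular an admissible family. The main obstacle, and the step I would think about most carefully, is $(5)\Rightarrow(1)$: here I would use the admissible covering $\mc{U} = \mc{L}_X$ itself, which is admissible by the earlier Proposition and covers $X$ by (LS3). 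Condition $(5)$ forces a finite subfamily $L_1, \dots, L_n \in \mc{L}_X$ with $X = L_1 \cup \dots \cup L_n$; then (LS2) (closure under finite unions) gives $X \in \mc{L}_X$, establishing $(1)$.

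The subtle point throughout is keeping straight which distinguished family to feed into the admissibility or covering hypotheses: the trick in $(1)\Rightarrow(4)$ is to test with $L = X$ (legitimate precisely because we are assuming $X \in \mc{L}_X$), while the trick in $(5)\Rightarrow(1)$ is to test condition $(5)$ against the canonical admissible covering $\mc{L}_X$ and then invoke finite-union closure. I expect no genuine difficulty beyond this bookkeeping; the two loops $(1)\Rightarrow(2)\Rightarrow(3)\Rightarrow(1)$ and $(1)\Rightarrow(4)\Rightarrow(5)\Rightarrow(1)$ together yield the full equivalence, since every condition becomes interderivable through $(1)$.
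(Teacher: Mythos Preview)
Your proposal is correct and follows essentially the same approach as the paper: the same two cycles $(1)\Rightarrow(2)\Rightarrow(3)\Rightarrow(1)$ and $(1)\Rightarrow(4)\Rightarrow(5)\Rightarrow(1)$, using the Proposition $\mc{L}_X = \mc{L}^s_X \cap \mc{L}^o_X$ for the first loop and testing admissibility against $L=X$ and against the canonical covering $\mc{L}_X$ for the second. Your write-up is slightly more detailed in justifying $X\in\mc{L}^o_X$ and invoking (LS2) and (LS3) explicitly, but the argument is identical.
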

\begin{proof}
$(1) \Rightarrow (2)$ Since $X\in \mc{L}_X \subseteq \mc{L}^s_X$, then $\mc{L}^s_X=\mc{P}(X)$.\\
$(2) \Rightarrow (3)$ We have 
$\mc{L}_X = \mc{L}^s_X \cap \mc{L}^o_X=\mc{L}^o_X$. \\
$(3) \Rightarrow (1)$ We have $X\in \mc{L}^o_X=\mc{L}_X$.\\
$(1) \Rightarrow (4)$ If $\mc{U}$ is an admissible family, then for $L=X$ there exists a finite subfamily  $\mc{U}_X\subseteq \mc{U}$ such that
$\bigcup \mc{U}=\bigcup \mc{U}_X$. So $\mc{U}$ is essentially finite.\\
$(4) \Rightarrow (5)$ Trivial.\\
$(5) \Rightarrow (1)$ The admissible covering $\mc{L}_X$ of $X$   is essentially finite. There exist $L_1,...,L_k \in \mc{L}_X$ such that $X=\bigcup \mc{L}_X = L_1 \cup
...\cup L_k$. Hence $X\in \mc{L}_X$.
\end{proof}

\begin{defi}
A locally small space  $(X, \mc{L}_X)$
is \textbf{small} if it satisfies one of the conditions of the previous theorem.
\end{defi}
 
\begin{defi}[cf. \cite{PW}, Def. 3.2]
A locally small space $(X, \mc{L}_X)$  is \textbf{topologically compact} if the topological space $(X, \mc{L}^{wo}_X)$ is compact (i. e. each subfamily of $\mc{L}^{wo}_X$ covering $X$ admits a finite subcovering).
\end{defi}

\begin{rem}
Topological compactness implies smallness, but smallness does not imply topological compactness (take $\mb{R}_{st}, \mb{R}_{slom}, \mb{R}_{om}$ or $\mb{R}_{rom}$ from Example \ref{real-lines}).
In the class of locally small spaces, admissible compactness (i. e. condition (5) above, 
see also  Definitions 3.2 and 3.3 of \cite{PW}) is equivalent to smallness.
\end{rem}

\subsection{Gluing of spaces}

\begin{defi}
An \textbf{open subspace}  of $(X, \mc{L}_X)$ induced by  $U\in \mc{L}^o_X$ is a pair   of the form $(U, \mc{L}_X \cap_1 U)$. 
\end{defi}

\begin{defi}[cf. \cite{Pie2}, Def. 2.2.43]
For a family of locally small spaces $\{(X_i,\mc{L}_i)\}_{i\in I}$ such that
\begin{enumerate}
\item[$(\star )$] 
each $X_i\cap X_j  (i,j\in I)$ belongs both to $\mc{L}^o_i$ and to $\mc{L}^o_j$ and
the open subspaces induced by  $X_i\cap X_j$ both in $(X_i, \mc{L}_i)$ and in  
$(X_j, \mc{L}_j)$ are equal 
(i. e. $\mc{L}_i  \cap_1 X_j = \mc{L}_j  \cap_1 X_i$),
\end{enumerate}
the  \textbf{admissible union} of this family is the pair $(X, \mc{L}_X)$, 
where $X=\bigcup_{i\in I} X_i$ and $\mc{L}_X\subseteq \mc{P}(X)$ is the smallest ring of sets containing  $\bigcup_{i\in I}\mc{L}_i$. 
We shall then  write 
$$(X,\mc{L}_X)=\bigcup^a_{i \in I} (X_i,\mc{L}_i).$$
\end{defi}

\begin{prop}\label{glu}
For an admissible union  as above:
\begin{enumerate}
\item[$(a)$] 
members of $\mc{L}_X$ are sets of the form $L=L_1 \cup ... \cup L_k$, where
$L_j \in \mc{L}_{i_j}$ for $j=1,...,k$, 
\item[$(b)$] each $(X_i,\mc{L}_i)$ is an open subspace of $(X, \mc{L}_X)$, 
\item[$(c)$] the family $\{X_i\}_{i\in I}$ is admissible in $(X, \mc{L}_X)$.
\end{enumerate}
\end{prop}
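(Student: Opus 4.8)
The plan is to prove part $(a)$ first, since parts $(b)$ and $(c)$ both reduce to it by routine arguments. Write $\mc{G}=\bigcup_{i\in I}\mc{L}_i$ and let $\mc{R}$ denote the family of all finite unions $L_1\cup\dots\cup L_k$ with $L_j\in\mc{L}_{i_j}$. Since every ring of sets is closed under finite unions, any ring containing $\mc{G}$ must contain $\mc{R}$; hence it suffices to verify that $\mc{R}$ is itself a ring of sets, i.e. closed under finite unions (immediate) and finite intersections. Using the distributive law $(\bigcup_j L_j)\cap(\bigcup_l M_l)=\bigcup_{j,l}(L_j\cap M_l)$, closure under intersection follows once I establish the key lemma: for $L\in\mc{L}_i$ and $M\in\mc{L}_j$ the set $L\cap M$ again lies in $\mc{G}$, in fact in both $\mc{L}_i$ and $\mc{L}_j$.

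For this lemma I would exploit both halves of $(\star)$. Since $X_i\cap X_j\in\mc{L}^o_i$ and $L\subseteq X_i$, compatibility yields $L\cap X_j=L\cap(X_i\cap X_j)\in\mc{L}_i$; therefore $L\cap X_j\in\mc{L}_i\cap_1 X_j=\mc{L}_j\cap_1 X_i$, so $L\cap X_j=M'\cap X_i$ for some $M'\in\mc{L}_j$. Then $L\cap M=M\cap(L\cap X_j)=M\cap M'\cap X_i$, and since $M\cap M'\in\mc{L}_j$ and $X_j\cap X_i\in\mc{L}^o_j$, this set lies in $\mc{L}_j$; by symmetry it also lies in $\mc{L}_i$. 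I expect this step to be the main obstacle, as it is essentially the only place where the full strength of $(\star)$ is needed, and one must be careful to invoke both the openness clause $X_i\cap X_j\in\mc{L}^o_i$ and the agreement clause $\mc{L}_i\cap_1 X_j=\mc{L}_j\cap_1 X_i$.

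With $(a)$ in hand, part $(b)$ amounts to checking that $X_i\in\mc{L}^o_X$ and that $\mc{L}_i=\mc{L}_X\cap_1 X_i$. For the equality, I would write an arbitrary $L\in\mc{L}_X$ as $\bigcup_j L_j$ via $(a)$; each $L_j\cap X_i$ lies in $\mc{L}_i$ by the same computation as in the lemma, so $L\cap X_i\in\mc{L}_i$ by (LS2), giving $\mc{L}_X\cap_1 X_i\subseteq\mc{L}_i$, while the reverse inclusion is clear since every $L\in\mc{L}_i$ satisfies $L=L\cap X_i$. The openness $X_i\in\mc{L}^o_X$ then follows because $X_i\cap L\in\mc{L}_X\cap_1 X_i=\mc{L}_i\subseteq\mc{L}_X$ for every $L\in\mc{L}_X$.

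Finally, for part $(c)$ I observe that $\{X_i\}_{i\in I}$ is an open family by $(b)$, and since $\bigcup_{i\in I}X_i=X$, admissibility for a given $L\in\mc{L}_X$ reduces to producing a finite $J\subseteq I$ with $L\subseteq\bigcup_{i\in J}X_i$. Representing $L=L_1\cup\dots\cup L_k$ with $L_j\in\mc{L}_{i_j}$ by $(a)$ and taking $J=\{i_1,\dots,i_k\}$ achieves this, because $L_j\subseteq X_{i_j}$; the finite subfamily $\{X_i\}_{i\in J}$ then witnesses admissibility. Thus all three parts follow, with the only real work concentrated in the intersection lemma underlying $(a)$.
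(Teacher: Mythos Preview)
Your proof is correct and follows essentially the same approach as the paper's. The paper also reduces part $(a)$ to the intersection lemma $L_i\cap L_j\in\mc{L}_i\cap\mc{L}_j$ (via the identity $L_i\cap L_j=(L_i\cap X_j)\cap(X_i\cap L_j)$ followed by the agreement clause of $(\star)$ and then the openness clause), and handles $(b)$ and $(c)$ exactly as you do by decomposing an arbitrary $L\in\mc{L}_X$ into a finite union using $(a)$; your organization is slightly more explicit, but the substance is identical.
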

\begin{proof}
$(a)$: It is sufficient to prove that for $L_i\in \mc{L}_i$ and $L_j\in  \mc{L}_j$ we have $L_i\cap L_j \in  \mc{L}_i \cap  \mc{L}_j$. We have 
$$L_i\cap L_j=(L_i\cap X_j) \cap (X_i \cap L_j)\stackrel{(\star )}{=} (L_i\cap X_j) \cap (\tilde{L}\cap X_j)= (L_i\cap \tilde{L}) \cap X_j \in  \mc{L}_i,$$
where $\tilde{L}\in \mc{L}_i$.  We get $L_i\cap L_j \in  \mc{L}_j$ similarly. \\
$(b)$: We check that $X_i \in  \mc{L}^o_X$.
Take $L \in \mc{L}_X$. Then $L=L_1 \cup ... \cup L_k$, 
where
$L_j \in \mc{L}_{i_j}$ for $j=1,...,k$. 
Since $X_i \cap L_j \in \mc{L}_{i_j}$, we get $X_i \cap L \in  \mc{L}_X$.
Hence $X_i \in  \mc{L}^o_X$.

We check that $ \mc{L}_i =  \mc{L}_X \cap_1 X_i$. 
By the definition of  $\mc{L}_X$, we get $ \mc{L}_i \subseteq  \mc{L}_X \cap_1 X_i$. For the opposite inclusion,   assume $L\in \mc{L}_X$ and $L=\bigcup_{j=1}^k L_j$, where  $L_j \in \mc{L}_{i_j}$ for $j=1,...,k$. 
Then $L\cap X_i=\bigcup_{j=1}^k L_j \cap X_i\stackrel{(\star )}{=} 
\bigcup_{j=1}^k X_{i_j} \cap M_{j_i} \cap X_i$ with some $M_{j_i} \in \mc{L}_i$.
Since $X_{i_j}\cap X_i \in  \mc{L}^o_i$,  we get $L\cap X_i \in \mc{L}_i$.\\
$(c)$: Take $L\in \mc{L}_X$. Then $L=\bigcup_{j\in J} L_j$ $(L_j\in \mc{L}_{i_j})$, where $J$ is finite, so  the family $\{X_i\}_{i\in I} \cap_1 L=\{ X_i \cap L\}_{i\in I}$ 
 is essentially finite, since $L=\bigcup_{j\in J}  X_{i_j} \cap  L$. 
 This proves that $\{X_i\}_{i\in I}$ is admissible. 
\end{proof}

\begin{rem}
Locally small spaces are certain sets with Grothendieck topologies on them.
Proposition \ref{glu} corresponds to Proposition 2 of Subsection 9.1.3 in \cite{BGR}
and Proposition 2.2.42 in \cite{Pie2}.
\end{rem}

\begin{cor} Each locally small space can be written as the admissible union of all its open small subspaces 
$$(X, \mc{L}_X)=\bigcup^a_{L\in\mc{L}_X} (L,\mc{L}_X \cap_1 L).$$
\end{cor}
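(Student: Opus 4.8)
The plan is to check two things in turn: first, that the indexed family $\{(L,\mc{L}_X\cap_1 L)\}_{L\in\mc{L}_X}$ satisfies the compatibility condition $(\star)$, so that its admissible union is genuinely defined; and second, that this admissible union recovers $(X,\mc{L}_X)$ exactly. Almost all of the work is bookkeeping with the operation $\cap_1$, together with repeated use of the fact that $\mc{L}_X$ is closed under finite unions and intersections.

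First I would record that each $(L,\mc{L}_X\cap_1 L)$ really is an open subspace. By the Remark we have $\mc{L}_X\subseteq\mc{L}^o_X$, so every $L\in\mc{L}_X$ is a legitimate inducing set. Moreover the pair $(L,\mc{L}_X\cap_1 L)$ is itself a locally small space: (LS1) holds since $\emptyset=\emptyset\cap L$, (LS2) holds because $\mc{L}_X$ is closed under finite unions and intersections and $\cap L$ distributes over both, and (LS3) holds since $L=L\cap L$ covers the ground set. This observation matters because it lets me apply the Remark \emph{inside} each subspace, yielding $\mc{L}_X\cap_1 L\subseteq(\mc{L}_X\cap_1 L)^o$.

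Next I would verify $(\star)$ for two indices $L,M\in\mc{L}_X$. By (LS2) we have $L\cap M\in\mc{L}_X$, and writing $L\cap M=(L\cap M)\cap L$ exhibits it as a member of $\mc{L}_X\cap_1 L$; by the inclusion just noted, $L\cap M\in(\mc{L}_X\cap_1 L)^o$, and symmetrically $L\cap M\in(\mc{L}_X\cap_1 M)^o$. Equality of the two induced subspaces is immediate from associativity and commutativity of intersection:
$$(\mc{L}_X\cap_1 L)\cap_1 M=\{\,N\cap(L\cap M):N\in\mc{L}_X\,\}=(\mc{L}_X\cap_1 M)\cap_1 L.$$
Thus $(\star)$ holds and the admissible union $(X',\mc{L}')$ exists. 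To identify it, note that its underlying set is $X'=\bigcup_{L\in\mc{L}_X}L=X$ by (LS3). Its generating family is $\bigcup_{L\in\mc{L}_X}(\mc{L}_X\cap_1 L)=\{\,N\cap L:N,L\in\mc{L}_X\,\}$, which equals $\mc{L}_X$: it is contained in $\mc{L}_X$ by (LS2), and every $M\in\mc{L}_X$ is recovered as $M\cap M$. Since $\mc{L}_X$ already contains $\emptyset$ and is closed under finite unions and intersections, it is itself a ring of sets, so the smallest ring of sets containing the generating family $\mc{L}_X$ is $\mc{L}_X$ itself; hence $\mc{L}'=\mc{L}_X$.

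I expect the only mildly delicate point to be confirming that each $(L,\mc{L}_X\cap_1 L)$ is a locally small space, since the argument for $(\star)$ relies on invoking the earlier results (in particular $\mc{L}_X\cap_1 L\subseteq(\mc{L}_X\cap_1 L)^o$) within these subspaces. Everything else reduces to the distributivity of $\cap_1$ over finite unions and intersections and to the fact, noted above, that $\mc{L}_X$ is already a ring of sets.
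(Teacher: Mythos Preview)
Your proof is correct and follows the natural route. The paper states this Corollary without proof, treating it as an immediate consequence of the definitions and of Proposition~\ref{glu}; you have simply written out the routine verifications (that $(\star)$ holds for the family $\{(L,\mc{L}_X\cap_1 L)\}_{L\in\mc{L}_X}$, and that the resulting admissible union recovers $(X,\mc{L}_X)$) which the paper leaves implicit.
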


\subsection{GTS and identification}

\begin{defi}[cf. \cite{DK}, Def.1 in I.1 and \cite{Pie2}, Def. 2.2.2] \label{gts}
A \textbf{(Delfs-Kne\-busch) generalized topological space} (shortly: \textbf{gts}) is a system of the form 
 $(X,Op_X,Cov_X)$, where $X$ is any set, $Op_X\subseteq \mc{P}(X)$,
and $Cov_X\subseteq \mc{P}(Op_X)$,  such that 
the following axioms are satisfied:
\begin{enumerate}{}{\setlength{\leftmargin}{4cm}\setlength{\rightmargin}{.2cm}\setlength{\labelsep}{.2cm}}
\item[(\textbf{finiteness})] if $\mc{U}\subseteq Op_X$ is finite, then  
$\mc{U}\in Cov_X$ and
$\bigcup \mc{U},\bigcap \mc{U}\in Op_X$ (where $\bigcap \emptyset =X$),

\item[(\textbf{stability})]  if $V\in Op_X$ and $\mc{U}\in Cov_X$, then $V\cap_1\mc{U}\in Cov_{X}$, 

\item[(\textbf{transitivity})] if $\mathcal{U}\in\text{Cov}_X$ and, for each $U\in \mathcal{U}$, we have $\mathcal{V}(U)\in\text{Cov}_X$ such that $\bigcup\mathcal{V}(U)=U$, then $\bigcup_{U\in\mathcal{U}}\mathcal{V}(U)\in\text{Cov}_X$,  

\item[(\textbf{saturation})] if $\mc{U}\in Cov_X, \mc{V}\subseteq Op_X, \:\bigcup\mc{U}=\bigcup \mc{V}$ and $\mc{U}$ is a refinement of $\mc{V}$, 
then $\mc{V}\in Cov_X$,

\item[(\textbf{regularity})] if $\mc{U}\in Cov_X$, $W\subseteq \bigcup\mc{U}$, and $W\cap_1\mc{U}\subseteq Op_X$, then $W\in Op_X$.    
\end{enumerate}
Since $Op_X=\bigcup Cov_X$, a generalized topological space is often denoted $(X, Cov_X)$, and $Cov_X$ is called a \textbf{generalized topology}. 
Members of $Cov_X$ are called \textbf{admissible families} and members of $Op_X$ are called \textbf{open sets}.

A mapping between generalized topological spaces is \textbf{strictly continuous} if the preimage of an admissible family is an admissible family. The category of generalized topological spaces and  strictly continuous mappings is denoted by \textbf{GTS}.
\end{defi}

\begin{defi}[cf. \cite{Pie2}, Def. 2.2.25]
A set $S\subseteq X$ is \textbf{small} if each admissible family $\mc{U}$ is essentially finite on $S$ (i. e. $\mc{U} \cap_1 S$ is essentially finite). The family of all small open subsets of $X$ is denoted by $Smop_X$.
\end{defi}

\begin{defi}
A generalized topological space  $(X,Op_X,Cov_X)$  is:
\begin{enumerate}
\item[$(a)$] \textbf{small} if $X$ is a small set (cf. \cite{Pie2}, Def. 2.3.1),
\item[$(b)$] \textbf{locally small} if  there exists an admissible covering of $X$ by small open sets (cf. \cite{Pie3}, Def. 2.1.1).
\end{enumerate}
\end{defi}

\begin{defi}[cf. \cite{PW}, p. 242] For a family of families $\Psi \subseteq \mc{P}^2(X)$, we denote by
$\langle \Psi \rangle_X$  the smallest generalized topology on $X$ containing $\Psi$.
\end{defi}

\begin{defi}[cf. \cite{Pie2}, Def. 2.2.13 and Prop. 2.2.71] For any families $\mc{U}, \mc{V}$ of subsets of $X$, we define
$$EF(\mc{U},\mc{V})= \mbox{the family of subfamilies of $\mc{U}$ essentially finite on members of $\mc{V}$,}$$
$$EssFin(\mc{U})=EF(\mc{U},\{X\}).$$
\end{defi}
The next lemma says that the locally small spaces in the sense of Definition \ref{lss} may be understood as a special kind of Delfs-Knebusch generalized topological spaces.
\begin{lem}\label{id}
For a locally small space $(X, \mc{L}_X)$, all axioms of a Delfs-Knebusch generalized topological space are satisfied by  $(X, \mc{L}^o_X, EF(\mc{L}^o_X,\mc{L}_X))$. 
In particular, the regularity axiom holds:
\begin{enumerate}
\item[(REG)]
if  $\mc{U}$ is an admissible open  family and $W\subseteq \bigcup \mc{U}$ 
 such that $W\cap_1 \mc{U}\subseteq \mc{L}^o_X$, then $W\in \mc{L}^o_X$.
\end{enumerate} 
\end{lem}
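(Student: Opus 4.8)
The plan is to verify the five axioms of a Delfs--Knebusch gts for the system $(X, \mc{L}^o_X, EF(\mc{L}^o_X, \mc{L}_X))$ one by one, treating the regularity axiom as the crux since it is singled out in the statement. Throughout, I would lean on two structural facts already available: that $\mc{L}_X$ is closed under finite intersections, so the preceding lemma gives $\mc{L}^{oo}_X = \mc{L}^o_X$ (i.e. $\mc{L}^o_X$ is itself closed under finite intersections, in fact under the full lattice operations inherited from $\mc{L}_X$), and the definition of admissibility (local essential finiteness) specialized to the family $\mc{L}_X$ of smops. The key observation making everything run is that $\bigcup Cov_X = Op_X$ here: every $V \in \mc{L}^o_X$ is the union of $V \cap_1 \mc{L}_X$, and the singleton-type families needed to witness each open set lie in $EF(\mc{L}^o_X, \mc{L}_X)$, so the candidate $Op_X$ and $Cov_X$ are compatible.

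First I would dispatch \textbf{finiteness}: a finite subfamily $\mc{U} \subseteq \mc{L}^o_X$ is trivially essentially finite on every $L \in \mc{L}_X$ (take $\mc{U}_L = \mc{U}$), so $\mc{U} \in EF(\mc{L}^o_X, \mc{L}_X)$; and $\bigcup \mc{U}, \bigcap \mc{U} \in \mc{L}^o_X$ because $\mc{L}^o_X$ is closed under finite unions and intersections (with $\bigcap \emptyset = X \in \mc{L}^o_X$, since $X \cap_1 \mc{L}_X = \mc{L}_X$). For \textbf{stability}, given $V \in \mc{L}^o_X$ and $\mc{U} \in EF(\mc{L}^o_X, \mc{L}_X)$, I would check that $V \cap_1 \mc{U} \subseteq \mc{L}^o_X$ (closure under intersection) and is essentially finite on each $L$: if $\mc{U}_L$ witnesses essential finiteness of $\mc{U}$ on $L$, then $V \cap_1 \mc{U}_L$ witnesses it for $V \cap_1 \mc{U}$, using $(\bigcup(V\cap_1\mc{U})) \cap L = V \cap (\bigcup\mc{U}) \cap L = V \cap (\bigcup\mc{U}_L) \cap L$. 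The \textbf{transitivity} and \textbf{saturation} axioms are bookkeeping about essential finiteness: for transitivity, if $\mc{U}$ is essentially finite on $L$ via a finite $\mc{U}_L$, and each $U$ is covered admissibly, I would combine the finitely many witnessing subfamilies $\mc{V}(U)_L$ for $U \in \mc{U}_L$ into a finite witness for $\bigcup_U \mc{V}(U)$ on $L$, relying on $\bigcup \mc{V}(U) = U$ to control the union outside $\mc{U}_L$; for saturation, a refinement relation lets a finite witness for $\mc{U}$ on $L$ be transported to one for the coarser $\mc{V}$ covering the same set.

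The main obstacle, as flagged, is \textbf{regularity} (REG): from $\mc{U}$ admissible, $W \subseteq \bigcup \mc{U}$, and $W \cap_1 \mc{U} \subseteq \mc{L}^o_X$, I must conclude $W \in \mc{L}^o_X$, i.e. $W \cap_1 \mc{L}_X \subseteq \mc{L}_X$. Fix $L \in \mc{L}_X$; I would use admissibility of $\mc{U}$ on $L$ to get a finite $\mc{U}_L = \{U_1, \dots, U_n\} \subseteq \mc{U}$ with $(\bigcup\mc{U}) \cap L = (\bigcup_i U_i) \cap L$. Since $W \subseteq \bigcup \mc{U}$, this yields $W \cap L = W \cap (\bigcup_i U_i) \cap L = \bigcup_i (W \cap U_i) \cap L$. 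Now each $W \cap U_i$ lies in $\mc{L}^o_X$ by hypothesis, so $(W \cap U_i) \cap L = (W \cap U_i) \cap_1 L \in \mc{L}_X$ because $U_i \in \mc{L}^o_X$ applied to $L$ keeps us small, or more directly because intersecting a compatible set with a smop lands in $\mc{L}_X$. A finite union of members of $\mc{L}_X$ is again a smop by (LS2), so $W \cap L \in \mc{L}_X$. As $L$ was arbitrary, $W \in \mc{L}^o_X$. The delicate point to get exactly right is that $(W \cap U_i) \cap L$ genuinely lands in $\mc{L}_X$ rather than merely in $\mc{L}^o_X$: this is where finiteness of $\mc{U}_L$ (hence membership in the bornology via $L$) combines with compatibility, and I would spell out that $W \cap U_i \in \mc{L}^o_X$ intersected with the smop $L$ gives an element of $\mc{L}_X$, completing the reduction of the full regularity axiom to the displayed (REG) statement.
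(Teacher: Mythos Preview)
Your proof is correct and follows essentially the same approach as the paper: the paper dispatches the first four axioms with ``the rest of the axioms are easy to check'' and proves (REG) exactly as you do, by fixing $L\in\mc{L}_X$, using admissibility to write $W\cap L=\bigcup_{i=1}^{k}(W\cap U_i\cap L)$ for finitely many $U_i\in\mc{U}$, and noting each $W\cap U_i\cap L\in\mc{L}_X$ so that (LS2) finishes. Your treatment is simply more explicit on the four ``easy'' axioms.
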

\begin{proof}
We are to check if $W\cap L\in \mc{L}_X$ for each $L\in \mc{L}_X$.
Notice that $W\cap L=W\cap L\cap \bigcup \mc{U}=W\cap L\cap (U_1 \cup ...\cup U_k)$ for some $U_1,...,U_k\in \mc{U}$, since $\mc{U}$ is locally essentially finite.
But each $W\cap L\cap U_i$, with $i=1,...,k$, belongs to $\mc{L}_X$, which is a ring of subsets of $X$. That is why $W\cap L \in \mc{L}_X$.  

The rest of the axioms are easy to check.
\end{proof}

\begin{cor}
The union  of an admissible family is an open set.
\end{cor}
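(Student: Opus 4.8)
The plan is to deduce the corollary directly from the regularity axiom (REG) established in Lemma \ref{id}, applied to the special case where the candidate set $W$ equals the whole union $\bigcup \mc{U}$ of an admissible open family $\mc{U}$. So suppose $\mc{U}\subseteq \mc{L}^o_X$ is admissible (locally essentially finite); I must show $\bigcup \mc{U}\in \mc{L}^o_X$, i.e. that the union of an admissible family is open.

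First I would set $W=\bigcup \mc{U}$, so that trivially $W\subseteq \bigcup \mc{U}$. The only hypothesis of (REG) left to verify is the compatibility condition $W\cap_1 \mc{U}\subseteq \mc{L}^o_X$, that is, $W\cap U\in \mc{L}^o_X$ for every $U\in \mc{U}$. But $W\cap U=\bigl(\bigcup \mc{U}\bigr)\cap U=U$, since $U\subseteq \bigcup \mc{U}$, and $U\in \mc{L}^o_X$ by assumption. Hence $W\cap_1 \mc{U}=\mc{U}\subseteq \mc{L}^o_X$, and the hypotheses of (REG) are all met. Applying (REG) yields $W=\bigcup \mc{U}\in \mc{L}^o_X$, which is exactly the claim.

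There is essentially no obstacle here: the content of the corollary is entirely carried by the regularity axiom proved in the preceding lemma, and the corollary is simply the instance $W=\bigcup\mc{U}$. The one point that deserves a word of care is making sure the family to which (REG) is applied is genuinely admissible in the sense of the definition and that its members lie in $\mc{L}^o_X$; both are built into the phrase \emph{admissible family}, since by definition an admissible family is a subfamily of $\mc{L}^o_X$ that is locally essentially finite. Thus the verification of the compatibility hypothesis reduces to the trivial identity $\bigl(\bigcup\mc{U}\bigr)\cap U=U$, and no further computation is required.
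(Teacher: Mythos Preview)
Your proof is correct and is exactly the approach the paper intends: the corollary is stated immediately after Lemma~\ref{id} with no proof, so it is meant to be the trivial instance $W=\bigcup\mc{U}$ of the regularity axiom (REG) just established. Your verification that $W\cap_1\mc{U}=\mc{U}\subseteq\mc{L}^o_X$ is precisely the point.
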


\begin{lem}\label{covx}
For a locally small gts $(X, Op_X, Cov_X)$:
\begin{enumerate}
\item[$(a)$] the pair $(X, Smop_X)$ is a locally small space (in the sense of Definition 2.1),
\item[$(b)$] \quad $Cov_X=EF(Smop_X^o, Smop_X)$. 
\end{enumerate}
\end{lem}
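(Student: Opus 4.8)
The plan is to verify the two parts separately, using the gts axioms of Definition~\ref{gts} and relying throughout on local smallness: there is an admissible covering $\mc{C}\in Cov_X$ of $X$ with $\mc{C}\subseteq Smop_X$.

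For part $(a)$ I would first record two closure properties of small sets: a subset of a small set is small, and a finite union of small sets is small. Both follow from the definition, since for $T\subseteq S$ and any admissible $\mc{U}$ one has $\bigcup(\mc{U}\cap_1 T)=(\bigcup(\mc{U}\cap_1 S))\cap T$, so an essentially finite trace on $S$ restricts to one on $T$, while the traces on $S_1\cup S_2$ are covered by the union of the two finite witnessing subfamilies. Combined with the finiteness axiom (which makes $Op_X$ closed under finite intersections and unions and puts $\emptyset\in Op_X$), this yields (LS1) and (LS2) for $Smop_X$. Finally (LS3) is precisely the local smallness assumption: the members of $\mc{C}\subseteq Smop_X$ already cover $X$, so $\bigcup Smop_X=X$.

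The key intermediate step, needed for part $(b)$, is the identification $Smop_X^o=Op_X$. The inclusion $Op_X\subseteq Smop_X^o$ is immediate, because for $Y\in Op_X$ and $S\in Smop_X$ the set $Y\cap S$ is open by finiteness and small as a subset of $S$. For the reverse inclusion I would invoke the regularity axiom: given $Y\in Smop_X^o$, the family $Y\cap_1\mc{C}$ lies in $Smop_X\subseteq Op_X$ and $Y\subseteq X=\bigcup\mc{C}$ with $\mc{C}\in Cov_X$, so regularity forces $Y\in Op_X$.

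Granting $Smop_X^o=Op_X$, part $(b)$ becomes $Cov_X=EF(Op_X,Smop_X)$. The inclusion $Cov_X\subseteq EF(Op_X,Smop_X)$ is routine: every member of an admissible family is open, and by definition every admissible family is essentially finite on each small set. The hard direction, and the main obstacle, is $EF(Op_X,Smop_X)\subseteq Cov_X$, i.e.\ reconstructing full admissibility of an $\mc{U}\in EF(Op_X,Smop_X)$ from its essentially finite traces on small open sets, and here I plan to combine three axioms in turn. First, an essentially finite subfamily of $Op_X$ is admissible, by applying saturation to a finite witnessing subfamily (which is admissible by finiteness), so each trace $\mc{U}\cap_1 S$ with $S\in\mc{C}$ lies in $Cov_X$; the same computation shows that $O:=\bigcup\mc{U}$ is open, since $O\cap S=\bigcup(\mc{U}\cap_1 S)$ is a finite union of open subsets of the small set $S$, whence $O\in Smop_X^o=Op_X$. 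Next, stability gives $O\cap_1\mc{C}\in Cov_X$, an admissible covering of $O$; applying transitivity to this covering together with the admissible families $\mc{U}\cap_1 S$ (which cover the respective members $O\cap S$) produces an admissible family $\mc{W}\subseteq\mc{U}\cap_1\mc{C}$ with $\bigcup\mc{W}=O=\bigcup\mc{U}$. Since every member of $\mc{W}$ is contained in a member of $\mc{U}$, the family $\mc{W}$ refines $\mc{U}$, and a final appeal to saturation (using $\mc{U}\subseteq Op_X$) yields $\mc{U}\in Cov_X$, completing the argument.
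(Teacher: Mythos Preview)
Your proof is correct and follows essentially the same route as the paper's: both first establish $Op_X=Smop_X^o$ via the regularity axiom, and then handle the hard inclusion $EF(Op_X,Smop_X)\subseteq Cov_X$ by the sequence stability--transitivity--saturation. The only cosmetic difference is that the paper works throughout with the full family $Smop_X$ (which is itself admissible, by saturation applied to your $\mc{C}$) in place of your fixed small-open admissible covering $\mc{C}$.
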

\begin{proof}
$(a)$: The family $Smop_X$ contains the empty set, is closed under finite unions and intersections and covers $X$ by the definition of a locally small gts.

$(b)$: We first prove $Op_X=Smop_X^o$.

Obviously, $Op_X\subseteq Smop_X^o$. If $V\in Smop_X^o$, then $V=V\cap \bigcup Smop_X= \bigcup (V\cap_1 Smop_X)$. In the gts $(X,Op_X, Cov_X)$, the family $Smop_X$ is admissible, and $V\cap_1 Smop_X$ is open. By the regularity axiom, the set $V$ is open, so $Smop_X^o  \subseteq   Op_X$.

Now we prove $Cov_X=EF(Op_X,Smop_X)$.

Clearly, $Cov_X \subseteq EF(Op_X,Smop_X)$.  If $\mc{U}  \in EF(Op_X,Smop_X)$, then, for each $V\in Smop_X$, the family $\mc{U}\cap_1 V$ is essentially finite, so admissible. Hence $(\bigcup \mc{U})\cap V\in Smop_X$ and $\bigcup \mc{U}\in Op_X$ by the regularity axiom. Since $Smop_X$ is admissible, also  $(\bigcup \mc{U})\cap_1 Smop_X$ is admissible by the stability axiom, $\mc{U}\cap_1 Smop_X$ is admissible by the transitivity axiom, and $\mc{U}$ is admissible by the saturation axiom.
We have proved that  $EF(Op_X,Smop_X) \subseteq Cov_X$.
\end{proof}

\begin{lem}\label{smops}
If $(X, \mc{L}_X)$ is a locally small space, then the family of  small open sets in the gts
$(X, \mc{L}^o_X, EF(\mc{L}^o_X,\mc{L}_X))$ is $Smop_X=\mc{L}_X$.
\end{lem}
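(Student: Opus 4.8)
The plan is to identify the small open sets in the gts $(X, \mc{L}^o_X, EF(\mc{L}^o_X,\mc{L}_X))$ by unwinding the definition of ``small set'' in a gts and matching it against $\mc{L}_X$. Recall that in this gts the open sets are $Op_X = \mc{L}^o_X$ and the admissible families are $Cov_X = EF(\mc{L}^o_X,\mc{L}_X)$. A set $S \subseteq X$ is small precisely when every admissible family is essentially finite on $S$, so $Smop_X$ consists of those $V \in \mc{L}^o_X$ that are small in this sense. I want to show this collection equals $\mc{L}_X$, and I would prove the two inclusions separately.

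For the inclusion $\mc{L}_X \subseteq Smop_X$, first note that each $L \in \mc{L}_X$ lies in $\mc{L}^o_X$ (by the Remark that $\mc{L}_X \subseteq \mc{L}^o_X$), so $L$ is open. To see $L$ is small, take any admissible family $\mc{U} \in EF(\mc{L}^o_X, \mc{L}_X)$; by the very definition of $EF(\mc{L}^o_X,\mc{L}_X)$, such a family is essentially finite on every member of $\mc{L}_X$, and in particular on $L$ itself. Thus $\mc{U} \cap_1 L$ is essentially finite, which is exactly the condition that $L$ be small. So every member of $\mc{L}_X$ is a small open set.

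For the reverse inclusion $Smop_X \subseteq \mc{L}_X$, suppose $V$ is a small open set; then $V \in \mc{L}^o_X$ and every admissible family is essentially finite on $V$. The natural family to test against is $\mc{L}_X$ itself, which I showed earlier is admissible in $(X,\mc{L}_X)$, and which sits inside $EF(\mc{L}^o_X, \mc{L}_X)$ (since each $\mc{U}_L = \{L\}$ witnesses essential finiteness on each member of $\mc{L}_X$). By smallness of $V$, the family $\mc{L}_X \cap_1 V$ must be essentially finite on $V$. Now $V = V \cap \bigcup \mc{L}_X = \bigcup (\mc{L}_X \cap_1 V)$ because $\mc{L}_X$ covers $X$ by (LS3), so essential finiteness gives finitely many $L_1,\dots,L_k \in \mc{L}_X$ with $V = (V\cap L_1) \cup \dots \cup (V \cap L_k)$. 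Since $V \in \mc{L}^o_X$, each $V \cap L_i \in \mc{L}_X$, and as $\mc{L}_X$ is closed under finite unions by (LS2) we conclude $V \in \mc{L}_X$.

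The step I expect to require the most care is the reverse inclusion, specifically verifying that essential finiteness of $\mc{L}_X \cap_1 V$ on $V$ actually produces a \emph{finite} subcover of $V$ by smops. The definition of essential finiteness is stated relative to a set $S$ via $\mc{U} \cap_1 S$, so I must be attentive to whether ``essentially finite on $V$'' refers to covering $\bigcup (\mc{L}_X \cap_1 V)$ or covering $V$ directly; these coincide here precisely because $\mc{L}_X$ covers $X$, which makes $\bigcup(\mc{L}_X \cap_1 V) = V$. Once that identification is in place, the closure properties (LS2) together with $V \in \mc{L}^o_X$ finish the argument cleanly, and the forward inclusion is essentially immediate from the definition of $EF(\mc{L}^o_X,\mc{L}_X)$.
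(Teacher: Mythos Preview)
Your proof is correct and follows essentially the same approach as the paper: both directions are handled the same way, with the key step in the reverse inclusion being to test smallness of $V$ against the admissible family $\mc{L}_X$ itself. The only cosmetic difference is in the final line of the reverse inclusion: the paper observes $V \subseteq L_1 \cup \dots \cup L_k$, hence $V \in \mc{L}^s_X$, and then invokes the earlier Proposition $\mc{L}_X = \mc{L}^s_X \cap \mc{L}^o_X$, whereas you write $V = (V\cap L_1)\cup \dots \cup (V\cap L_k)$ and finish directly with (LS2); these are equivalent unpackings of the same conclusion.
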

\begin{proof}
Obviously, $\mc{L}_X \subseteq Smop_X$. We prove $Smop_X \subseteq \mc{L}_X$.
Obviously, $Smop_X \subseteq Op_X=\mc{L}_X^o$. If $V$ is small open in $(X,\mc{L}^o_X, EF(\mc{L}^o_X,\mc{L}_X))$, then the family $\mc{L}_X\in EF(\mc{L}^o_X,\mc{L}_X)$ is essentially finite on $V$, so $V\subseteq L_1 \cup ... \cup L_k \in \mc{L}_X$ and
$V\in \mc{L}^s$. Finally, $Smop_X\subseteq \mc{L}_X^s  \cap \mc{L}_X^o = \mc{L}_X$.
\end{proof}

\begin{rem} 
We shall identify a locally small space  $(X, \mc{L}_X)$   with a locally small gts $(X,\mc{L}^o_X, EF(\mc{L}^o_X,\mc{L}_X))$. 
\end{rem}
 
\begin{rem}
Consequently, each small space    $(X, \mc{L}_X)$ is identified with a small
gts $(X,\mc{L}_X,EssFin(\mc{L}_X))$. 
 \end{rem}
\subsection{Mappings between spaces}

\begin{defi}
Assume $(X, \mc{L}_X)$ and $(Y,\mc{L}_Y)$  are locally small spaces.
A mapping $f:X \to Y$ will be called: 

\begin{enumerate}
\item[(a)] \textbf{weakly continuous}  if 
$f^{-1}(\mc{L}^{wo}_Y)\subseteq \mc{L}^{wo}_X$,
\item[(b)]  \textbf{bounded} if $\mc{L}_X$ is a refinement of $f^{-1}(\mc{L}_Y)$,
\item[(c)]  \textbf{continuous} if $ f^{-1}(\mc{L}_Y) \subseteq  \mc{L}^o_X$ (i. e. $f^{-1}(\mc{L}_Y)$ is compatible with $\mc{L}_X$). 
\end{enumerate} 
\end{defi}

\begin{prop} Each continuous mapping is weakly continuous.
\end{prop}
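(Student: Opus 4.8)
The plan is to show directly that weak continuity follows from continuity by unwinding the definitions. The statement to prove is: if $f:X\to Y$ satisfies $f^{-1}(\mc{L}_Y)\subseteq \mc{L}^o_X$, then $f^{-1}(\mc{L}^{wo}_Y)\subseteq \mc{L}^{wo}_X$. So I would start by taking an arbitrary weakly open set $W\in \mc{L}^{wo}_Y$ and aim to show $f^{-1}(W)\in \mc{L}^{wo}_X$.

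First I would use the explicit description of the weak topology: by the definition of $\mc{L}^{wo}_Y=\topo(\mc{L}_Y)$, every $W\in \mc{L}^{wo}_Y$ is of the form $W=\bigcup \mc{U}$ for some subfamily $\mc{U}\subseteq \mc{L}_Y$. Then preimages commute with arbitrary unions, so
$$f^{-1}(W)=f^{-1}\Big(\bigcup_{U\in\mc{U}} U\Big)=\bigcup_{U\in\mc{U}} f^{-1}(U).$$
Now each $U\in\mc{U}$ lies in $\mc{L}_Y$, so by the continuity hypothesis $f^{-1}(U)\in\mc{L}^o_X$. Hence $f^{-1}(W)$ is a union of members of $\mc{L}^o_X$.

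The remaining step is to conclude that such a union lies in $\mc{L}^{wo}_X$. Here I would invoke the proposition proved earlier in the excerpt that $\mc{L}^o_X\subseteq \mc{L}^{wo}_X$, so each $f^{-1}(U)\in\mc{L}^{wo}_X$; since $\mc{L}^{wo}_X=\topo(\mc{L}_X)$ is a genuine topology, it is closed under arbitrary unions, and therefore $f^{-1}(W)=\bigcup_{U\in\mc{U}}f^{-1}(U)\in\mc{L}^{wo}_X$. This establishes weak continuity.

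I do not expect any serious obstacle: the argument is a routine chaining of three facts — the union-description of the weak topology, the compatibility of preimages with unions, and the inclusion $\mc{L}^o_X\subseteq\mc{L}^{wo}_X$ together with closure of a topology under unions. The only point requiring a moment of care is making sure the continuity hypothesis is applied to members of $\mc{L}_Y$ (the smops) rather than to the weakly open sets directly, which is why decomposing $W$ as a union of smops at the very start is the right move.
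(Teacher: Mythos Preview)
Your proposal is correct and is precisely the argument the paper has in mind; the paper's proof is the one-line remark that it ``follows from the fact that the preimage of a union equals the union of preimages,'' and your write-up simply unpacks this by decomposing $W\in\mc{L}^{wo}_Y$ into smops, pulling back termwise via continuity, and using $\mc{L}^o_X\subseteq\mc{L}^{wo}_X$ together with closure of $\mc{L}^{wo}_X$ under unions.
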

\begin{proof}
Follows from the fact that the preimage of a union equals the union of preimages.
\end{proof}

\begin{prop}
For locally small spaces  $(X, \mc{L}_X)$, $(Y,\mc{L}_Y)$, 
a mapping $f:X \to Y$ is bounded continuous iff it satisfies \\
\ \quad $(bc)$ $f(\mc{L}_X^s)\subseteq \mc{L}_Y^s$ and  $f^{-1}(\mc{L}^{o}_Y)\subseteq \mc{L}^{o}_X$.
\end{prop}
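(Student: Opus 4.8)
The plan is to unwind both definitions and prove the two implications separately, with the reverse direction being the routine one. For the reverse implication, assume $(bc)$. Continuity is immediate: since $\mc{L}_Y \subseteq \mc{L}^o_Y$ (by the remark following the definition of $\mc{L}^o_X$), we get $f^{-1}(\mc{L}_Y) \subseteq f^{-1}(\mc{L}^o_Y) \subseteq \mc{L}^o_X$, which is exactly continuity. For boundedness, I would take any $L \in \mc{L}_X$; since $\mc{L}_X \subseteq \mc{L}_X^s$, the hypothesis $f(\mc{L}_X^s) \subseteq \mc{L}_Y^s$ gives $f(L) \in \mc{L}_Y^s$, so $f(L) \subseteq M$ for some $M \in \mc{L}_Y$, i.e. $L \subseteq f^{-1}(M)$. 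Thus $\mc{L}_X$ refines $f^{-1}(\mc{L}_Y)$, as required.

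For the forward implication, assume $f$ is bounded and continuous. The inclusion $f(\mc{L}_X^s) \subseteq \mc{L}_Y^s$ follows from a straightforward bornological computation: any $B \in \mc{L}_X^s$ lies in some $L \in \mc{L}_X$, boundedness provides $M \in \mc{L}_Y$ with $f(L) \subseteq M$, and then $f(B) \subseteq f(L) \subseteq M$, so $f(B) \in \mc{L}_Y^s$.

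The crux is the second half of $(bc)$, namely $f^{-1}(\mc{L}^o_Y) \subseteq \mc{L}^o_X$. Here I would fix $V \in \mc{L}^o_Y$ and test compatibility of $f^{-1}(V)$ against an arbitrary $L \in \mc{L}_X$. The key idea is to localize using boundedness: pick $M \in \mc{L}_Y$ with $f(L) \subseteq M$, so that $L \subseteq f^{-1}(M)$. Since $V$ is compatible with $\mc{L}_Y$ and $M \in \mc{L}_Y$, we have $V \cap M \in \mc{L}_Y$, and continuity yields $f^{-1}(V \cap M) \in \mc{L}^o_X$. Because $L \subseteq f^{-1}(M)$, I can rewrite $f^{-1}(V) \cap L = f^{-1}(V) \cap f^{-1}(M) \cap L = f^{-1}(V \cap M) \cap L$, and this set lies in $\mc{L}_X$ by the defining property $\mc{L}^o_X \cap_1 \mc{L}_X \subseteq \mc{L}_X$. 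As $L \in \mc{L}_X$ was arbitrary, this proves $f^{-1}(V) \in \mc{L}^o_X$.

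I expect the localization step in the last paragraph to be the main obstacle. Compatibility of $f^{-1}(V)$ is a condition tested against all smops of $X$, whose images under $f$ need not be small a priori, whereas continuity only controls preimages of members of $\mc{L}_Y$. Boundedness is precisely what permits restricting attention to the bounded piece $V \cap M \in \mc{L}_Y$, thereby reducing preservation of open sets to the already-available continuity on smops; without it the argument collapses, which is why both conditions appear jointly in $(bc)$.
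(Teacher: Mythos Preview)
Your proof is correct and follows essentially the same approach as the paper. The paper dismisses the implication $(bc)\Rightarrow$ bounded continuous as obvious and, for the converse, proves only the inclusion $f^{-1}(\mc{L}^o_Y)\subseteq \mc{L}^o_X$ via exactly your localization trick: given $V\in\mc{L}^o_Y$ and $W\in\mc{L}_X$, choose $U\in\mc{L}_Y$ with $f(W)\subseteq U$ and write $f^{-1}(V)\cap W=f^{-1}(V\cap U)\cap W\in f^{-1}(\mc{L}_Y)\cap_1\mc{L}_X\subseteq\mc{L}_X$; your version merely inserts the intermediate observation $f^{-1}(V\cap M)\in\mc{L}^o_X$ before intersecting with $L$, which amounts to the same thing.
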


\begin{proof} Obviously, $(bc)$ implies $(b)$ and $(c)$. We need to prove that $f$ bounded continuous implies $(bc)$.
Assume $V \in  \mc{L}^o_Y$ and $W \in \mc{L}_X$. We need to prove $f^{-1}(V)\cap W \in \mc{L}_X$. Choose some $U \in \mc{L}_Y$ containing $f(W)$.
Then
$$f^{-1}(V)\cap W=f^{-1}(V\cap U)\cap W 
\in f^{-1} (\mc{L}_Y) \cap_1 \mc{L}_X \subseteq \mc{L}_X. $$
\end{proof}

\begin{exam}
Without the boundedness assumption, the equivalence above does not hold. Consider the mapping $f=id_{\mb{R}}:\mb{R}_{om} \to \mb{R}_{lom}$. Then $f^{-1}(\mc{L}_{lom})\subsetneq \mc{L}^{o}_{om}$, but $ f^{-1}(\mc{L}^{o}_{lom})$ contains much more sets than $\mc{L}^{o}_{om}$.
\end{exam}

\begin{lem}  \label{bcsc}
For locally small spaces  $(X, \mc{L}_X)$, $(Y,\mc{L}_Y)$ and a mapping $f:X\to Y$, the following conditions are equivalent:
\begin{enumerate}
\item[$(1)$] $f$ is bounded continuous,
\item[$(2)$] $f$ is strictly continuous (i. e. $f^{-1}(EF(\mc{L}^o_Y,\mc{L}_Y))
\subseteq EF(\mc{L}^o_X,\mc{L}_X)$).
\end{enumerate}
\end{lem}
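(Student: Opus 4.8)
The plan is to prove the equivalence $(1)\Leftrightarrow(2)$ by unwinding both the preimage operation on essentially finite families and the previously established characterization $(bc)$ of bounded continuous maps. The key external facts I would lean on are Lemma~\ref{id} (so that $EF(\mc{L}^o_X,\mc{L}_X)$ is a genuine generalized topology whose open sets are exactly $\mc{L}^o_X$), Lemma~\ref{smops} (so that the distinguished smops $\mc{L}_X$ are recovered as the small open sets $Smop_X$ of this gts), and the $(bc)$-characterization of bounded continuity: $f$ is bounded continuous iff $f(\mc{L}^s_X)\subseteq \mc{L}^s_Y$ and $f^{-1}(\mc{L}^o_Y)\subseteq \mc{L}^o_X$.

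For the direction $(1)\Rightarrow(2)$, I would start with an arbitrary admissible family $\mc{U}\in EF(\mc{L}^o_Y,\mc{L}_Y)$ and show that $f^{-1}(\mc{U})=\{f^{-1}(V):V\in\mc{U}\}$ lies in $EF(\mc{L}^o_X,\mc{L}_X)$. The first thing to check is that $f^{-1}(\mc{U})$ is an open family, i.e. each $f^{-1}(V)$ is in $\mc{L}^o_X$; this is exactly the second clause of $(bc)$, namely $f^{-1}(\mc{L}^o_Y)\subseteq\mc{L}^o_X$. The substantive point is the essential finiteness condition: given $L\in\mc{L}_X$, I need a finite subfamily of $f^{-1}(\mc{U})$ that agrees with $\bigcup f^{-1}(\mc{U})$ on $L$. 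Here I would use boundedness to pick some $M\in\mc{L}_Y$ with $f(L)\subseteq M$ (equivalently $L\subseteq f^{-1}(M)$), then apply admissibility of $\mc{U}$ at the smop $M$ to obtain a finite $\mc{U}_M\subseteq\mc{U}$ with $(\bigcup\mc{U})\cap M=(\bigcup\mc{U}_M)\cap M$. Intersecting with $f^{-1}$ and restricting to $L$ should give $(\bigcup f^{-1}(\mc{U}))\cap L=(\bigcup f^{-1}(\mc{U}_M))\cap L$, since taking preimages commutes with unions and intersections and $L\subseteq f^{-1}(M)$ lets me insert the $M$-restriction for free.

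For the converse $(2)\Rightarrow(1)$, I would specialize strict continuity to cleverly chosen admissible families and read off the two clauses of $(bc)$. Since $\mc{L}^o_Y$ is itself admissible (it is the open family $\mc{U}=\mc{L}^o_Y$, which is trivially locally essentially finite at each smop because $\mc{L}^o_Y$ is closed under finite unions), strict continuity forces $f^{-1}(\mc{L}^o_Y)\subseteq\mc{L}^o_X$, giving the continuity half of $(bc)$. For boundedness, the cleanest route is to invoke the gts reformulation: strict continuity means $f$ is a morphism in $\gts$ between the spaces identified in the Remark following Lemma~\ref{smops}, and such a morphism must send small sets to small sets, so $f(Smop_X)\subseteq$ (small sets of $Y$); combined with Lemma~\ref{smops} this yields $f(\mc{L}^s_X)\subseteq\mc{L}^s_Y$. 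Alternatively one argues directly that if some $L\in\mc{L}_X$ had $f(L)$ contained in no single smop of $Y$, then pulling back the admissible family $\mc{L}_Y$ would produce a family that fails to be essentially finite on $L$, contradicting $(2)$.

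The main obstacle I anticipate is the bookkeeping in $(2)\Rightarrow(1)$ for the boundedness clause: essential finiteness of $f^{-1}(\mc{U})$ at $L$ controls how the union behaves on $L$, but I must convert ``essentially finite preimage'' into the genuinely different statement that $f(L)$ is bounded, i.e. contained in one smop. The delicate step is ensuring that essential finiteness of $f^{-1}(\mc{L}_Y)$ on $L$ really forces $f(L)$ into a finite union $L_1\cup\cdots\cup L_k$ of smops of $Y$ (hence into one smop, by (LS2)); this needs $\mc{L}_Y$ to cover $Y$ so that $\bigcup f^{-1}(\mc{L}_Y)\supseteq L$, after which essential finiteness at $L$ supplies the finite subcover whose union is a single smop. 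Everything else — commuting $f^{-1}$ with the lattice operations, and the admissibility of the test families — is routine once these identifications are in place.
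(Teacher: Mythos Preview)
Your proposal is correct and follows essentially the same approach as the paper: for $(1)\Rightarrow(2)$ you pick a smop $M\supseteq f(L)$ and pull back the finite subfamily $\mc{U}_M$, exactly as the paper does (phrased there as ``$\mc{L}_X$ refines $f^{-1}(\mc{L}_Y)$''), and for $(2)\Rightarrow(1)$ your direct argument with the admissible family $\mc{L}_Y$ simply unpacks what the paper delegates to the cited Propositions~2.1.22 of \cite{Pie3} and~2.2.26 of \cite{Pie2}. Your self-contained treatment of the boundedness clause is a mild improvement in exposition, but the underlying route is the same.
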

\begin{proof}
$(1) \Rightarrow (2)$ Let $\mc{V}\in EF(\mc{L}^o_Y,\mc{L}_Y))$. 
Since $f^{-1}(\mc{L}^o_Y)\subseteq \mc{L}^o_X$ and $\mc{L}_X$ is a refinement of $f^{-1}(\mc{L}_Y)$, the family $f^{-1}(\mc{V})$ is open and essentially finite on each member of $f^{-1}(\mc{L}_Y)$, so also on each member of $\mc{L}_X$. Hence
$f^{-1}(\mc{V})\in EF(\mc{L}^o_X,\mc{L}_X)$. The strict continuity of $f$ is proved.

$(2) \Rightarrow (1)$ 
Each strictly continuous mapping is continuous. 
By Proposition 2.1.22 of \cite{Pie3}, Proposition 2.2.26 of \cite{Pie2} it is also bounded.
\end{proof}

We give a strong restatement of Theorem 2.1.33 of \cite{Pie3} now.
\begin{thm} \label{1ki}
The category of locally small gtses with strictly continuous mappings is concretely isomorphic to the category of locally small spaces and continuous bounded mappings.
\end{thm}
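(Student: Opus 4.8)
The plan is to exhibit a concrete isomorphism of categories by specifying functors in both directions on objects and morphisms, and checking they are mutually inverse. Since we want a \emph{concrete} isomorphism (over $\set$), both functors must act as the identity on underlying sets and underlying mappings; the content is entirely in how the structure (smops versus the generalized topology $(Op_X, Cov_X)$) is translated back and forth. So the real work is to assemble the preceding lemmas into the statement that the two assignments
$$
(X,\mc{L}_X)\;\longmapsto\;(X,\mc{L}^o_X, EF(\mc{L}^o_X,\mc{L}_X))
\qquad\text{and}\qquad
(X,Op_X,Cov_X)\;\longmapsto\;(X,Smop_X)
$$
are inverse bijections on objects, and that strict continuity matches bounded continuity on morphisms.

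First I would define the two functors. In the direction from locally small spaces to locally small gtses, send $(X,\mc{L}_X)$ to $(X,\mc{L}^o_X, EF(\mc{L}^o_X,\mc{L}_X))$; by Lemma~\ref{id} this really is a gts, and it is locally small because $\mc{L}_X$ is an admissible (Proposition above) covering of $X$ by small open sets (these being exactly the smops, small and open). On morphisms, send a bounded continuous $f$ to the same underlying map; Lemma~\ref{bcsc} guarantees it is then strictly continuous. In the reverse direction, send a locally small gts $(X,Op_X,Cov_X)$ to $(X,Smop_X)$, which is a locally small space by Lemma~\ref{covx}(a), and send a strictly continuous map to itself; here I must observe that strict continuity of $f$ forces $f^{-1}(Smop_Y)\subseteq Op_X$ and boundedness, i.e.\ bounded continuity of $f$ viewed between the associated locally small spaces, which again is Lemma~\ref{bcsc} together with the identification of smops.

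Next I would verify that the two functors are mutually inverse on objects. Starting from $(X,\mc{L}_X)$, passing to the gts $(X,\mc{L}^o_X, EF(\mc{L}^o_X,\mc{L}_X))$ and back, Lemma~\ref{smops} gives $Smop_X=\mc{L}_X$, so we recover the original space on the nose. Starting from a locally small gts $(X,Op_X,Cov_X)$, forming $(X,Smop_X)$ and then the associated gts $(X,Smop_X^o, EF(Smop_X^o,Smop_X))$, Lemma~\ref{covx}(b) together with the identity $Op_X=Smop_X^o$ (established inside the proof of that lemma) gives back $(X,Op_X,Cov_X)$ exactly. Since both functors are the identity on underlying sets and on underlying functions, composing them in either order yields the identity functor, and functoriality (preservation of identities and composition) is immediate because composition of underlying maps is preserved.

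The main obstacle, and the only step with genuine content, is confirming that the object-level assignments are \emph{honest inverses}, i.e.\ that $Smop_X^o=Op_X$ and $Cov_X=EF(Op_X,Smop_X)$ for an \emph{arbitrary} locally small gts, not merely for one arising from a locally small space. This is precisely where the full strength of the gts axioms (regularity, stability, transitivity, saturation) is used, and it is exactly Lemma~\ref{covx}(b); once that lemma and Lemmas~\ref{id},~\ref{smops},~\ref{bcsc} are in hand, the theorem is a matter of organizing them into the two functors and checking the round-trips, with no further calculation required. I would therefore present the proof as: define the functors, cite Lemmas~\ref{id} and~\ref{bcsc} for well-definedness on objects and morphisms in one direction, cite Lemma~\ref{covx} for the other direction, and invoke Lemmas~\ref{smops} and~\ref{covx}(b) for the two compositions being identities.
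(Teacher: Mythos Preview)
Your proposal is correct and follows exactly the paper's approach: the paper's proof is the single line ``Follows from Lemmas \ref{id}, \ref{covx}, \ref{smops} and \ref{bcsc}'', and you have simply unpacked how those four lemmas assemble into the two mutually inverse concrete functors. Your additional remark that the gts $(X,\mc{L}^o_X, EF(\mc{L}^o_X,\mc{L}_X))$ is itself locally small (needed for the functor to land in the right category) is a small detail the paper leaves implicit, but it is easily read off from the admissibility of $\mc{L}_X$ together with the easy inclusion $\mc{L}_X\subseteq Smop_X$ in Lemma~\ref{smops}.
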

\begin{proof}
Follows from Lemmas  \ref{id},  \ref{covx}, \ref{smops} and \ref{bcsc}.
\end{proof}

\begin{rem}
The category of locally small spaces and their bounded continuous mappings will be denoted by  \textbf{LSS}.
This category was denoted by \textbf{Sublat} in \cite{Pie3}. Since  \textbf{Sublat}  and \textbf{LSS}  are concretely isomorphic constructs (compare \cite{AHS}, Remark 5.12), we identify them. 
\end{rem}

\begin{cor}
For small spaces $(X, \mc{L}_X)$, $(Y,\mc{L}_Y)$, the following conditions are equivalent for a mapping $f:X \to Y$:  
\begin{enumerate}
\item[$(a)$] $f$ is bounded continuous, 
\item[$(b)$] $f$ is continuous, 
\item[$(c)$] $f^{-1}(\mc{L}_Y)  \subseteq \mc{L}_X$.
\end{enumerate}
\end{cor}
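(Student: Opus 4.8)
The plan is to exploit smallness twice: smallness of the source $X$ collapses $\mc{L}^o_X$ onto $\mc{L}_X$, while smallness of the target $Y$ makes the boundedness requirement vacuous, so that all three conditions land on the single condition ``continuous''.

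First I would record the consequence of $X$ being small. By the theorem characterizing smallness (the equivalence $(1)\Leftrightarrow(3)$ used to define small spaces), $X$ small means precisely $\mc{L}_X=\mc{L}^o_X$. Hence continuity of $f$, which by definition reads $f^{-1}(\mc{L}_Y)\subseteq\mc{L}^o_X$, becomes literally $f^{-1}(\mc{L}_Y)\subseteq\mc{L}_X$. This yields the equivalence $(b)\Leftrightarrow(c)$ immediately, with no further computation.

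Next I would dispatch $(a)\Leftrightarrow(b)$. The implication $(a)\Rightarrow(b)$ is immediate, since ``bounded continuous'' means bounded \emph{and} continuous, hence in particular continuous. For the converse I would observe that boundedness is automatic from smallness of $Y$: since $Y$ is small, $Y\in\mc{L}_Y$, so $f^{-1}(Y)=X$ is a member of $f^{-1}(\mc{L}_Y)$, and every smop $L\in\mc{L}_X$ satisfies $L\subseteq X=f^{-1}(Y)$. Thus $\mc{L}_X$ is a refinement of $f^{-1}(\mc{L}_Y)$, i.e.\ $f$ is bounded, irrespective of continuity. Combining this with $(b)$ gives $(a)$, closing the cycle.

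I do not expect a genuine obstacle here. The single point to be spotted is that a small target trivializes the refinement (boundedness) condition, because the preimage of the whole small space $Y$ already exhausts $X$; once this is noticed, the three conditions coincide and the argument is only a few lines. As an alternative one could invoke Lemma \ref{bcsc} together with the preceding $(bc)$-characterization, but the direct argument above is shorter and more transparent, so I would present that.
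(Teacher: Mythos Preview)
Your argument is correct and is exactly the unwinding the paper intends: the corollary is stated without proof, as it follows immediately from the smallness characterization $\mc{L}_X=\mc{L}^o_X$ (giving $(b)\Leftrightarrow(c)$) together with $Y\in\mc{L}_Y$ (making boundedness automatic, hence $(a)\Leftrightarrow(b)$). There is nothing to add.
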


\begin{rem}
The category of  small spaces and their  continuous mappings, concretely isomorphic to the category of small gtses and their (strictly) continuous mappings,  will be denoted by  \textbf{SS},
in accordance with \cite{Pie2, PW}.
\end{rem}

\subsection{Arbitrary subspaces}

\begin{defi}
A  \textbf{subspace}  of $(X, \mc{L}_X)$  induced by a subset $Y\subseteq X$ 
is the pair $(Y, \mc{L}_X \cap_1 Y)$.
\end{defi}

\begin{thm} \label{subsp}
The subspace induced by $Y\subseteq X$ in the sense of the above definition is, after identification, the same subspace that is induced by $Y$ in  $(X,\mc{L}^o_X, EF(\mc{L}^o_X,\mc{L}_X))$ as an object of $\mathbf{GTS}$.
\end{thm}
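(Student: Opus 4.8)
The plan is to compare the two generalized topological structures on $Y$ by reducing everything to their families of small open sets, via the identification machinery of Lemmas \ref{id}, \ref{covx} and \ref{smops}. Write $\mc{L}_Y = \mc{L}_X \cap_1 Y$ for the smops of the subspace taken in $\mathbf{LSS}$; after identification this yields the locally small gts $(Y, \mc{L}_Y^o, EF(\mc{L}_Y^o, \mc{L}_Y))$, whose small open sets are exactly $\mc{L}_Y$ by Lemma \ref{smops}. On the other hand, let $(Y, Op_Y, Cov_Y)$ denote the subspace of $(X, \mc{L}^o_X, EF(\mc{L}^o_X, \mc{L}_X))$ induced by $Y$ in $\mathbf{GTS}$, so that the inclusion $\iota \colon Y \to X$ is strictly continuous and $(Y, Op_Y, Cov_Y)$ carries the initial structure; in particular $Op_Y = \mc{L}^o_X \cap_1 Y$ and $\iota^{-1}(Cov_X) \subseteq Cov_Y$. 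Since a locally small gts is completely recovered from its small open sets via $Op = Smop^o$ and $Cov = EF(Smop^o, Smop)$ (Lemma \ref{covx}), it suffices to prove that $(Y, Op_Y, Cov_Y)$ is locally small and that its family of small open sets is again $Smop_Y = \mc{L}_X \cap_1 Y$.

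First I would settle the inclusion $\mc{L}_X \cap_1 Y \subseteq Smop_Y$. Each $L \in \mc{L}_X$ is open in $X$, so $L \cap Y \in \mc{L}^o_X \cap_1 Y = Op_Y$; and since $L$ is small in $X$, the set $L \cap Y \subseteq L$ is small in $X$ and hence small in the subspace $Y$ (smallness passes to subsets and is inherited by subspaces). Thus $L \cap Y \in Smop_Y$. As $\mc{L}_X \cap_1 Y$ covers $Y$ and equals the preimage $\iota^{-1}(\mc{L}_X)$ of the admissible family $\mc{L}_X \in Cov_X$, strict continuity of $\iota$ gives $\mc{L}_X \cap_1 Y \in Cov_Y$; so $Y$ admits an admissible covering by small open sets and $(Y, Op_Y, Cov_Y)$ is locally small.

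For the reverse inclusion $Smop_Y \subseteq \mc{L}_X \cap_1 Y$, take $S \in Smop_Y$. Applying the admissible family $\mc{L}_X \cap_1 Y$, which is essentially finite on the small set $S$, produces $L_1, \dots, L_k \in \mc{L}_X$ with $S \subseteq (L_1 \cup \dots \cup L_k) \cap Y \subseteq L$, where $L = L_1 \cup \dots \cup L_k \in \mc{L}_X$. Since $S \in Op_Y = \mc{L}^o_X \cap_1 Y$, write $S = U \cap Y$ with $U \in \mc{L}^o_X$; then $S = S \cap L = (U \cap L) \cap Y$ with $U \cap L \in \mc{L}_X$, so $S \in \mc{L}_X \cap_1 Y$. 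Combining the two inclusions gives $Smop_Y = \mc{L}_X \cap_1 Y$, and then Lemma \ref{covx} forces $Op_Y = (\mc{L}_X \cap_1 Y)^o = \mc{L}_Y^o$ and $Cov_Y = EF(\mc{L}_Y^o, \mc{L}_Y)$, i.e. the two gtses coincide.

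The main obstacle will be the two structural facts about the $\mathbf{GTS}$-subspace that I used without detailed justification: that its open sets are precisely the restrictions $\mc{L}^o_X \cap_1 Y$, and that smallness of a subset of $X$ is inherited by the subspace $Y$. The first requires checking that the initial generalized topology $\langle \iota^{-1}(Cov_X) \rangle_Y$ produces no open sets beyond $\mc{L}^o_X \cap_1 Y$; the second requires tracking essential finiteness of a restricted family $\mc{U} \cap_1 Y$ on $L \cap Y$ back to essential finiteness of $\mc{U}$ on $L$, and then propagating this property through the closure operations (finiteness, stability, transitivity, saturation) that generate $Cov_Y$. Once these are in place, the computation above is routine.
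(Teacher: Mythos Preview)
Your strategy---compute $Smop_Y$ for the $\mathbf{GTS}$-subspace directly and then invoke Lemma~\ref{covx}---is different from the paper's and is in principle workable, but essentially all of the content has been pushed into the two ``obstacles'' you flag at the end. The paper avoids both of them by proving a single structural lemma, namely
\[
EF(\mc{L}^o_X,\mc{L}_X)=\langle \{\mc{L}_X\}\rangle_X,
\]
so that the entire generalized topology is generated by the one admissible family $\mc{L}_X$. With this in hand, the $\mathbf{GTS}$-subspace becomes $\langle\,\langle\{\mc{L}_X\}\rangle_X \cap_2 Y\,\rangle_Y$, and Proposition~2.2.37 of \cite{Pie2} (compatibility of restriction with generation) reduces the comparison to the trivial identity $\{\mc{L}_X\}\cap_2 Y=\{\mc{L}_X\cap_1 Y\}$. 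No tracking of closure operations, no smallness-inheritance argument, and no description of $Op_Y$ is needed.

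About your first obstacle specifically: the assertion $Op_Y=\mc{L}^o_X\cap_1 Y$ is \emph{not} a general feature of $\mathbf{GTS}$-subspaces; what the paper's argument actually yields is $Op_Y=(\mc{L}_X\cap_1 Y)^o$, which a priori can be larger than $\mc{L}^o_X\cap_1 Y$. Fortunately your step~3 only needs the inclusion $Op_Y\subseteq(\mc{L}_X\cap_1 Y)^o$: from $S\in Op_Y$ and $S\subseteq L\cap Y$ you would get $S=S\cap(L\cap Y)\in \mc{L}_X\cap_1 Y$ directly, without ever writing $S=U\cap Y$ for $U\in\mc{L}^o_X$. But proving even that inclusion means controlling how the regularity axiom acts during the generation of $Cov_Y$, which is exactly the work packaged by the paper's lemma. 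Likewise, your second obstacle (smallness is inherited by subspaces) requires an induction over the five closure rules generating $\langle Cov_X\cap_2 Y\rangle_Y$. So your outline is correct, but once the obstacles are honestly addressed you will have reproduced the paper's lemma by another route; the generation lemma is the more economical packaging.
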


\begin{lem} $EF(\mc{L}^o_X,\mc{L}_X)=\langle \{ \mc{L}_X\} \rangle_X$.
\end{lem}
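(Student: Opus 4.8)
The goal is to show that the generalized topology $EF(\mc{L}^o_X,\mc{L}_X)$ — the family of open families essentially finite on each smop — coincides with $\langle \{\mc{L}_X\}\rangle_X$, the smallest generalized topology containing the single admissible family $\mc{L}_X$. This is a two-sided inclusion between families of families.

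My plan is as follows. First I would verify that $EF(\mc{L}^o_X,\mc{L}_X)$ is a genuine generalized topology: this is exactly the content of Lemma~\ref{id}, which checks all five axioms for $(X,\mc{L}^o_X,EF(\mc{L}^o_X,\mc{L}_X))$. Since this generalized topology contains $\mc{L}_X$ as an admissible family (because $\mc{L}_X$ is admissible on each $L\in\mc{L}_X$, taking $\mc{U}_L=\{L\}$), it contains the singleton $\{\mc{L}_X\}$, and hence by minimality $\langle\{\mc{L}_X\}\rangle_X\subseteq EF(\mc{L}^o_X,\mc{L}_X)$. This is the easy inclusion and follows formally from the definition of $\langle\cdot\rangle_X$.

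The substantive inclusion is $EF(\mc{L}^o_X,\mc{L}_X)\subseteq\langle\{\mc{L}_X\}\rangle_X$. Here I would take an arbitrary $\mc{U}\in EF(\mc{L}^o_X,\mc{L}_X)$ and show, using only the five axioms, that it must already lie in any generalized topology $Cov$ containing $\{\mc{L}_X\}$. The strategy mirrors the second half of the proof of Lemma~\ref{covx}(b): starting from the fact that $\mc{L}_X\in Cov$, apply the stability axiom to get $(\bigcup\mc{U})\cap_1\mc{L}_X\in Cov$; then, because $\mc{U}$ is essentially finite on each $L\in\mc{L}_X$, each $(\bigcup\mc{U})\cap L$ is covered by finitely many members of $\mc{U}$, so finiteness and transitivity let me replace the pieces $(\bigcup\mc{U})\cap L$ by families drawn from $\mc{U}$, yielding $\mc{U}\cap_1\mc{L}_X\in Cov$; finally the saturation axiom promotes $\mc{U}\cap_1\mc{L}_X$ to $\mc{U}$ itself, since $\mc{U}$ refines itself and has the same union. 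Each step keeps us inside the arbitrary $Cov$, so $\mc{U}\in\langle\{\mc{L}_X\}\rangle_X$.

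The main obstacle is the bookkeeping in the transitivity step: one must carefully exhibit, for each $L\in\mc{L}_X$, a finite admissible subfamily of $\mc{U}$ covering $(\bigcup\mc{U})\cap L$ and check that the union of these over all $L$ is exactly $\mc{U}\cap_1\mc{L}_X$ (up to saturation), so that transitivity applies cleanly to the admissible family $\mc{L}_X$. The essential finiteness of $\mc{U}$ on members of $\mc{L}_X$ is precisely what guarantees the required finite subfamilies exist, so the argument goes through; the delicate point is merely organizing the covering data so that the five axioms fire in the right order. Once that is arranged, combining both inclusions gives $EF(\mc{L}^o_X,\mc{L}_X)=\langle\{\mc{L}_X\}\rangle_X$.
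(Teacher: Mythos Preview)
Your easy inclusion and the overall skeleton for the hard one---stability, then transitivity, then saturation---match the paper's final step. But there is a genuine gap: you never invoke the \emph{regularity} axiom, and without it two of your three steps cannot fire.

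To apply stability and obtain $(\bigcup\mc{U})\cap_1\mc{L}_X\in Cov$ you need $\bigcup\mc{U}$ to be an \emph{open} set of the minimal gts $Cov=\langle\{\mc{L}_X\}\rangle_X$; and to apply saturation at the end you need $\mc{U}\subseteq Op_{Cov}$. A priori the only sets you know to be open in $Cov$ are the members of $\mc{L}_X$ (plus $X$ and finite Boolean combinations via finiteness); the members of $\mc{U}$ lie only in $\mc{L}^o_X$, which is in general strictly larger. The paper inserts an intermediate step exactly to close this gap: for any $V\in\mc{L}^o_X$ one has $V\cap_1\mc{L}_X\subseteq\mc{L}_X\subseteq Op_{Cov}$ and $V\subseteq\bigcup\mc{L}_X$, so regularity applied to the admissible family $\mc{L}_X$ yields $V\in Op_{Cov}$. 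Once $\mc{L}^o_X\subseteq Op_{Cov}$ is established (and hence also $EssFin(\mc{L}^o_X)\subseteq Cov$ by finiteness and saturation), your stability and saturation moves become legitimate and the rest of your outline goes through. So the real obstacle is not the transitivity bookkeeping you flag, but rather the openness of $\mc{L}^o_X$-sets in the minimal gts---and that is precisely what regularity buys you.
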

\begin{proof}
Obviously, $\mc{L}_X$ belongs to  the generalized topology $EF(\mc{L}^o_X,\mc{L}_X)$, so $\supseteq $ holds.

We prove $EF(\mc{L}^o_X,\mc{L}_X)\subseteq \langle \{ \mc{L}_X\} \rangle_X$.\\
1) Notice that $\mc{L}_X \subseteq \bigcup \langle \{ \mc{L}_X\} \rangle_X$. 
By Proposition 2.2.23 of \cite{Pie2},  $EssFin(\mc{L}_X)\subseteq  \langle \{ \mc{L}_X\} \rangle_X$. 
In particular,  for each $L \in \mc{L}_X$ we have $L\cap_1 \mc{L}_X \in \langle \{ \mc{L}_X\} \rangle_X$.\\
2) Each $V\in \mc{L}^o_X$ is the union of the family $V\cap_1 \mc{L}_X\subseteq \bigcup \langle \{ \mc{L}_X\} \rangle_X$.
By the regularity axiom applied to $V$ and $\mc{L}_X$, we have $V\in  \bigcup \langle \{ \mc{L}_X\} \rangle_X$.
By the stability axiom,  we have $V\cap_1 \mc{L}_X\in \langle \{ \mc{L}_X\} \rangle_X$; moreover, we get
$EssFin(\mc{L}^o_X)\subseteq  \langle \{ \mc{L}_X\} \rangle_X$. \\
3) Take any $\mc{U}\in EF(\mc{L}^o_X,\mc{L}_X)$. Then  for each $L\in \mc{L}_X$ 
we have  $L  \cap_1  \mc{U} \in EssFin(\mc{L}_X)$. Since $\bigcup \mc{U}\in \mc{L}^o_X$ and $\mc{L}_X \cap_1 (\bigcup \mc{U})\in \langle \{ \mc{L}_X\} \rangle_X$,
we get  $\mc{L}_X\cap_1 \mc{U}\in \langle \{ \mc{L}_X\} \rangle_X$ by the transitivity axiom, and  $\mc{U}\in \langle \{ \mc{L}_X\} \rangle_X$ by the saturation axiom.
\end{proof}

\begin{proof}[Proof of the Theorem \ref{subsp}]
By Definition 2.2.41 of \cite{Pie2}, we are to check that 
$$\langle  \langle \{ \mc{L}_X\} \rangle_X \cap_2 Y \rangle_Y=\langle \{ \mc{L}_X \cap_1 Y \} \rangle_Y.$$

Obviously, $ \{ \mc{L}_X \cap_1 Y \} =  \{ \mc{L}_X\}  \cap_2   Y  \subseteq 
 \langle \{ \mc{L}_X\} \rangle_X \cap_2 Y $, which gives $\supseteq$.
 
 By Proposition  2.2.37 of \cite{Pie2}, $\langle \{ \mc{L}_X\} \rangle_X \cap_2 Y \subseteq \langle \{ \mc{L}_X \cap_1 Y \} \rangle_Y$.
\end{proof}


%
\subsection{Paracompact, {Lindel\"of} and regular spaces}

\begin{prop}[cf. \cite{DK}, Prop. I.4.5]
For a locally small space $(X, \mc{L}_X)$, the following conditions are equivalent:
\begin{enumerate}
\item[$(a)$]  any admissible covering of $X$ by smops admits a refinement that is a locally finite covering of $X$ by smops,
\item[$(b)$]  there exists a locally finite subfamily of $\mc{L}_X$ covering~$X$.
\end{enumerate}
\end{prop}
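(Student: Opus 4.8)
The plan is to establish the two implications separately, with the forward direction being essentially immediate and the converse carrying the real content.

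For $(a)\Rightarrow(b)$, I would apply hypothesis $(a)$ to the single most natural admissible covering available, namely $\mc{L}_X$ itself. Indeed $\mc{L}_X$ covers $X$ by (LS3), and it was shown above that $\mc{L}_X$ is admissible in $(X,\mc{L}_X)$. Thus $(a)$ furnishes a locally finite covering of $X$ by smops refining $\mc{L}_X$; since its members are smops, it is a subfamily of $\mc{L}_X$, and this is exactly the family required by $(b)$.

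For $(b)\Rightarrow(a)$, I would fix a locally finite subfamily $\mc{V}\subseteq\mc{L}_X$ covering $X$ and let $\mc{U}$ be an arbitrary admissible covering of $X$ by smops. The idea is to ``cut'' the locally finite skeleton $\mc{V}$ against $\mc{U}$. For each $V\in\mc{V}$, admissibility of $\mc{U}$ supplies a finite subfamily $\mc{U}_V\subseteq\mc{U}$ with $(\bigcup\mc{U})\cap V=(\bigcup\mc{U}_V)\cap V$; since $\mc{U}$ covers $X$ we have $V\subseteq\bigcup\mc{U}$, whence $V=\bigcup_{U\in\mc{U}_V}(V\cap U)$. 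I would then take
$$\mc{W}=\{\,V\cap U : V\in\mc{V},\ U\in\mc{U}_V\,\}.$$
Each $V\cap U$ lies in $\mc{L}_X$ by closure under finite intersections (LS2); each is contained in some $U\in\mc{U}$, so $\mc{W}$ refines $\mc{U}$; and $\bigcup\mc{W}=\bigcup_{V\in\mc{V}}V=X$, so $\mc{W}$ is a covering of $X$ by smops refining $\mc{U}$.

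The one step requiring care, and which I regard as the crux, is the local finiteness of $\mc{W}$. Given $L\in\mc{L}_X$, local finiteness of $\mc{V}$ yields only finitely many $V\in\mc{V}$ meeting $L$; any member $V\cap U$ of $\mc{W}$ meeting $L$ forces its $V$ to meet $L$, so $V$ ranges over this finite set, while for each such $V$ the index set $\mc{U}_V$ is finite by construction. Hence only finitely many members of $\mc{W}$ meet $L$, so $\mc{W}$ is locally finite, completing $(b)\Rightarrow(a)$. No serious obstacle arises beyond bookkeeping: the essential point is that intersecting an admissible covering against a locally finite skeleton preserves local finiteness, because admissibility contributes only finitely many pieces over each smop of the skeleton.
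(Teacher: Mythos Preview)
Your proof is correct and follows essentially the same approach as the paper: for $(a)\Rightarrow(b)$ you apply $(a)$ to the admissible covering $\mc{L}_X$ itself, and for $(b)\Rightarrow(a)$ you intersect the given admissible covering against the locally finite skeleton, just as the paper does (the paper's $\mc{K}$, $\mc{L}$, $\mc{L}_K$, and $\tilde{\mc{L}}=\bigcup_{K\in\mc{K}}(\mc{L}_K\cap_1 K)$ are your $\mc{V}$, $\mc{U}$, $\mc{U}_V$, and $\mc{W}$). Your write-up is in fact more explicit than the paper's in checking local finiteness of the refinement.
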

\begin{proof}
$(a) \Rightarrow (b)$: The family $\mc{L}_X$ is an admissible covering of $X$ by smops, so it admits a locally finite refinement by smops, which is a subcovering of  $\mc{L}_X$.

$(b) \Rightarrow (a)$:
Let $\mc{K} \subseteq \mc{L}_X$ be a locally finite subcovering of $X$ and let $\mc{L}$ be an admissible covering of $X$ by smops.
The family  $\mc{L}$ is essentially finite on elements of $\mc{K}$, so for each $K \in \mc{K}$ there exists a finite subfamily  $\mc{L}_K\subseteq \mc{L}$ such that 
$K \subseteq \bigcup \mc{L}_K$. Consider  
$\tilde{\mc{L}}=\bigcup_{K\in \mc{K}} (\mc{L}_K  \cap K)$. This family of smops is a locally finite refinement of $\mc{L}$ and covers $X$.
\end{proof}

\begin{defi}[cf. \cite{DK}, Def. 2 in I.4]
A locally small space $(X, \mc{L}_X)$  is called \textbf{paracompact} if
it satisfies one of the equivalent conditions in the previous proposition.
\end{defi}
\begin{prop}[cf. \cite{DK}, Prop. I.4.16]
For a locally small space $(X, \mc{L}_X)$, the following conditions are equivalent:
\begin{enumerate}
\item[$(a)$] each admissible covering of $X$ by smops admits a countable admissible subcovering,
\item[$(b)$]  there exists a countable admissible subfamily  of $\mc{L}_X$ covering~$X$.
\end{enumerate}
\end{prop}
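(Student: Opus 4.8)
The plan is to mirror the proof of the preceding paracompactness proposition, replacing ``locally finite'' by ``countable'' throughout, and to reuse the two standing facts that $\mc{L}_X$ is itself an admissible covering of $X$ by smops and that smops are closed under finite operations.

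For $(a)\Rightarrow(b)$ I would simply feed the family $\mc{L}_X$ into hypothesis $(a)$. Since $\mc{L}_X$ is an admissible covering of $X$ by smops, $(a)$ hands back a countable admissible subcovering, which is precisely a countable admissible subfamily of $\mc{L}_X$ covering $X$; this is $(b)$.

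For $(b)\Rightarrow(a)$, let $\mc{K}=\{K_n:n\in\mb{N}\}\subseteq\mc{L}_X$ be a countable admissible covering and let $\mc{L}$ be an arbitrary admissible covering of $X$ by smops. Because $\mc{L}$ is admissible and each $K_n$ is a smop, $\mc{L}$ is essentially finite on $K_n$; as $\mc{L}$ covers $X$, this produces a finite subfamily $\mc{L}_n\subseteq\mc{L}$ with $(\bigcup\mc{L})\cap K_n=(\bigcup\mc{L}_n)\cap K_n=K_n$, so that $K_n\subseteq\bigcup\mc{L}_n$. I then set $\tilde{\mc{L}}=\bigcup_n\mc{L}_n$. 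This is a countable subfamily of $\mc{L}$ (a countable union of finite families) and it covers $X$, since $X=\bigcup_n K_n\subseteq\bigcup\tilde{\mc{L}}$.

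The only real obstacle is verifying that $\tilde{\mc{L}}$ is admissible, and here one must be careful, because a subfamily of an admissible family need not be admissible (witness $\{(k,k+1):k\in\mb{Z}\}\subseteq\mc{L}_{om}$ in $\mb{R}_{om}$, which fails admissibility against the unbounded smop $(0,\infty)$). So admissibility of $\tilde{\mc{L}}$ cannot be inherited for free from $\mc{L}$; instead I would exploit the admissibility of $\mc{K}$. Given $L\in\mc{L}_X$, the essential finiteness of $\mc{K}$ on $L$ together with $\bigcup\mc{K}=X$ yields finitely many indices $n_1,\dots,n_p$ with $L\subseteq K_{n_1}\cup\dots\cup K_{n_p}$, and I claim the finite family $\mc{F}=\mc{L}_{n_1}\cup\dots\cup\mc{L}_{n_p}\subseteq\tilde{\mc{L}}$ witnesses essential finiteness of $\tilde{\mc{L}}$ on $L$. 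The inclusion $(\bigcup\mc{F})\cap L\subseteq(\bigcup\tilde{\mc{L}})\cap L$ is immediate; for the reverse, any $x\in(\bigcup\tilde{\mc{L}})\cap L$ lies in some $K_{n_j}$ and in $\bigcup\mc{L}$, hence $x\in(\bigcup\mc{L})\cap K_{n_j}=(\bigcup\mc{L}_{n_j})\cap K_{n_j}\subseteq\bigcup\mc{F}$. This delivers the admissibility of $\tilde{\mc{L}}$ and completes $(b)\Rightarrow(a)$.
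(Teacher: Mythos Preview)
Your proof is correct and follows essentially the same route as the paper's own argument: apply $(a)$ to $\mc{L}_X$ for one direction, and for the other pick finite subfamilies $\mc{L}_n$ of the given admissible covering over each member of the countable admissible cover, take their union, and verify admissibility by composing the essential finiteness of $\mc{K}$ on smops with the essential finiteness of $\tilde{\mc{L}}$ on members of $\mc{K}$. The paper compresses the admissibility check into a single clause (``essentially finite on elements of $\mc{C}$ and $\mc{C}$ is essentially finite on elements of $\mc{L}_X$''), whereas you spell it out elementwise and add a useful cautionary remark that admissibility is not automatically inherited by subfamilies; but the content is the same.
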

\begin{proof}
$(a) \Rightarrow (b)$: 
The admissible covering  $\mc{L}_X$ admits a countable  admissible subcovering.

$(b) \Rightarrow (a)$:
Let $\mc{C} \subseteq \mc{L}_X$ be an admissible countable subcovering of $X$ and let $\mc{L}$ be an admissible covering of $X$ by smops.
The family  $\mc{L}$ is essentially finite on elements of $\mc{C}$, so for each $C \in \mc{C}$ there exists a finite subfamily  $\mc{L}_C\subseteq \mc{L}$ such that 
$C \subseteq \bigcup \mc{L}_C$. 
The family $\mc{N}=\bigcup_{C\in \mc{C}}   \mc{L}_C$  is a countable subcovering of $\mc{L}$ and is admissible, since it is essentially finite on elements of 
$\mc{C}$ and $\mc{C}$ is essentially finite on elements of  $\mc{L}_X$.
\end{proof}

\begin{defi}[cf. \cite{DK}, Def. 3 in I.4]
A locally small space $(X, \mc{L}_X)$  is  called \textbf{Lindel\"of} if
it satisfies one of the equivalent conditions in the previous proposition.
\end{defi}
\begin{defi}[cf. \cite{DK}, I.3, p. 35]   A locally small space $(X, \mc{L}_X)$  is called  
 \textbf{connected} if $X$ cannot be written as a disjoint union  of its two non-empty open subsets. 
\end{defi}

\begin{thm}[cf. \cite{DK}, Thm. I.4.17] \label{cpar}
Any connected paracompact locally small space is Lindel\"of.
\end{thm}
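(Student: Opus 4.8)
The plan is to reduce the statement to producing a countable admissible subfamily of $\mc{L}_X$ that covers $X$, since that is exactly condition $(b)$ of the proposition defining the Lindel\"of property. First I would invoke paracompactness in form $(b)$: there is a locally finite subfamily $\mc{K}\subseteq\mc{L}_X$ covering $X$, and after discarding empty members I may assume every $K\in\mc{K}$ is non-empty. The idea is then to single out, using connectedness, a countable ``chain-connected'' piece of $\mc{K}$ that already exhausts $X$.

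To organise this I would put a graph structure on $\mc{K}$, joining two smops by an edge exactly when they meet. Local finiteness of $\mc{K}$ says that each $L\in\mc{L}_X$, and in particular each $K\in\mc{K}$, meets only finitely many members of $\mc{K}$; hence every vertex of this graph has finite degree. The key lemma I would prove is that for any subfamily $\mc{C}\subseteq\mc{K}$ which is a union of connected components of this graph, the set $\bigcup\mc{C}$ is open, i.e. lies in $\mc{L}^o_X$. Indeed, given $L\in\mc{L}_X$, only finitely many $K\in\mc{C}$ meet $L$, so $(\bigcup\mc{C})\cap L$ is a finite union of sets $K\cap L\in\mc{L}_X$ and therefore belongs to $\mc{L}_X$; this is precisely the condition $W\cap_1\mc{L}_X\subseteq\mc{L}_X$ for $W=\bigcup\mc{C}$.

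Now I would fix any $K_0\in\mc{K}$ and let $\mc{C}_0$ be its connected component. Setting $U=\bigcup\mc{C}_0$ and $V=\bigcup(\mc{K}\setminus\mc{C}_0)$, both are open by the lemma, their union is $\bigcup\mc{K}=X$, and they are disjoint: a point in $U\cap V$ would force a smop of $\mc{C}_0$ to meet a smop outside $\mc{C}_0$, i.e. an edge between distinct components, which is impossible. Thus $X=U\cup V$ is a partition into two disjoint open sets; since $K_0\neq\emptyset$ we have $U\neq\emptyset$, so connectedness of $X$ forces $V=\emptyset$. As all members of $\mc{K}$ are non-empty, this means $\mc{C}_0=\mc{K}$; in particular $\mc{C}_0$ still covers $X$.

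It remains to observe that $\mc{C}_0$ is countable and admissible. Countability is the standard fact that a connected graph of finite degree has at most countably many vertices (the balls of radius $n$ around $K_0$ are finite and exhaust the component). Admissibility follows because $\mc{C}_0\subseteq\mc{K}$ is locally finite, and every locally finite open family is admissible by the proposition above. Hence $\mc{C}_0$ is a countable admissible subfamily of $\mc{L}_X$ covering $X$, giving condition $(b)$ of the Lindel\"of proposition, and the proof concludes. The main work is the openness lemma for component-unions and the combinatorial countability of a connected locally finite graph; the role of connectedness is simply to collapse the component decomposition to a single component, and everything else is bookkeeping.
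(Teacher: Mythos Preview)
Your proof is correct and follows essentially the same route as the paper's. The paper also fixes a locally finite covering $\mc{K}\subseteq\mc{L}_X$, picks a non-empty $M\in\mc{K}$, and builds the sets $M_0=M$, $M_{n+1}=\bigcup\{K\in\mc{K}:K\cap M_n\neq\emptyset\}$, $M_\infty=\bigcup_n M_n$; this is exactly the breadth-first exhaustion of your connected component $\mc{C}_0$, and the openness of $M_\infty$ and $X\setminus M_\infty$ is argued via the same ``$L\cap X_i$ is a finite union of smops'' computation you give in your lemma. Your graph-theoretic packaging is a bit more explicit (you state the component-union lemma in general and name the finite-degree fact separately), but the mathematical content is identical.
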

\begin{proof}
Let $\mc{K}$ be a locally finite subfamily of $\mc{L}_X$ covering $X$. 
Choose $M\in \mc{K}\setminus \{ \emptyset \}$ and  define 
$M_0=M$, $M_{n+1}=\bigcup \{K\in \mc{K}: K\cap M_n\neq \emptyset \}$ for 
$n\in \mb{N}$ and $M_{\infty}=\bigcup_{n\in \mb{N}} M_n$.
Define $X_1=M_{\infty}, X_2=X \setminus M_{\infty}$. Both $X_1,X_2$ are open.
Indeed, any $L\in \mc{L}_X$ is contained in some finite union of elements of $\mc{K}$.
But for $K\in \mc{K}$ either $K \cap X_i=\emptyset$ or $K \subseteq X_i$  $(i=1,2)$.
Hence $L\cap X_i$ is a finite union of elements of  $\mc{L}_X$, so an element of 
$\mc{L}_X$.

Since the space is connected, $M_{\infty}=X$. Since $M_{\infty}$ is a countable union of members of $\mc{K}$, the space is Lindel\"of.
\end{proof}

\begin{defi}
The family of \textbf{closed sets} in a locally small space is the family
$$\mc{L}_X^c=X\setminus_1 \mc{L}_X^o= 
\{ X \setminus V \: |\:  V\in \mc{L}_X^o \}.$$

A locally small space is \textbf{(strongly) regular} if: \\
a) each singleton is closed,\\
b) for each $x\in X$ and each $F\in \mc{L}_X^c$
not containing $x$, there exist $U,V \in \mc{L}_X^o$ such that $x\in U$, $F \subseteq V$ and $U\cap V = \emptyset$.
\end{defi}

\begin{defi}[cf. \cite{DK}, Def. 2 in I.7]
A locally small space $(X,\mc{L}_X)$ is:
\begin{enumerate}
\item[a)]  \textbf{taut} if the weak closure of each member of 
$\mc{L}_X$ is small (so contained in another member of $\mc{L}_X$), 
\item[b)] \textbf{strongly taut} if the weak closure  of each member of 
$\mc{L}_X$ is small and closed.
\end{enumerate}
\end{defi}

\begin{thm}[cf. \cite{DK}, Prop. I.4.18] \label{stlind}
Each strongly taut, Lindel\"of space is paracompact.
\end{thm}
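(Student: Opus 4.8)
The plan is to adapt the classical argument that a regular Lindel\"of topological space is paracompact, with strong tautness playing the role that regularity plays there. Since paracompactness only requires producing \emph{some} locally finite subfamily of $\mc{L}_X$ covering $X$ (condition $(b)$ of the relevant proposition), I need not refine an arbitrary given covering: it suffices to build one locally finite cover by smops outright.

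First I would invoke the Lindel\"of property to fix a countable admissible subfamily $\{L_n\}_{n\in\mb{N}}\subseteq\mc{L}_X$ covering $X$. Then I would use strong tautness: for each $n$ the weak closure $C_n:=wcl(L_n)$ is both small and closed, so $C_n$ is contained in some smop $M_n\in\mc{L}_X$, while $X\setminus C_n\in\mc{L}_X^o$. The candidate family is $W_n:=M_n\setminus\bigcup_{i<n}C_i$ for $n\in\mb{N}$. The verification then splits into three checks. (i) Each $W_n$ is a smop: a finite union of closed sets is closed, because one checks directly that $\mc{L}_X^o$ is closed under finite intersections, so $X\setminus\bigcup_{i<n}C_i\in\mc{L}_X^o$; and the intersection of the smop $M_n$ with an open set again lies in $\mc{L}_X$ by compatibility. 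This is exactly where strong tautness (closedness of the weak closures, not merely their smallness) is used. (ii) The family covers $X$: given $x$, choose the least index $n_0$ with $x\in C_{n_0}$ --- such $n_0$ exists since $x$ lies in some $L_n\subseteq C_n$ --- and note $x\in M_{n_0}$ while $x\notin C_i$ for $i<n_0$, so $x\in W_{n_0}$.

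The main obstacle, and the final check, (iii), is local finiteness in the sense proper to these spaces: every $L\in\mc{L}_X$ must meet only finitely many $W_n$. Here I would exploit the \emph{admissibility} of $\{L_n\}$, which forces $L\subseteq L_{n_1}\cup\dots\cup L_{n_k}\subseteq C_{n_1}\cup\dots\cup C_{n_k}$ for some finite index set $\{n_1,\dots,n_k\}$. Since $W_n\cap C_i=\emptyset$ whenever $i<n$, taking $N=\max\{n_1,\dots,n_k\}$ yields $W_n\cap L=\emptyset$ for all $n>N$, so $L$ can meet at most $W_1,\dots,W_N$. I expect this step to be the crux, because it is precisely where the nonstandard ``locally finite'' notion of these spaces must be reconciled with the countable bookkeeping coming from Lindel\"ofness; it also explains why admissibility of the Lindel\"of family, rather than merely its being a covering, is the property actually needed. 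Granting these three checks, $\{W_n\}_{n\in\mb{N}}$ is a locally finite subfamily of $\mc{L}_X$ covering $X$, and the space is paracompact.
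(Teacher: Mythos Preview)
Your proof is correct and follows essentially the same telescoping strategy as the paper: start from a countable admissible cover by smops, use strong tautness to enlarge and to subtract closed weak closures, and obtain a locally finite cover by smops. The only cosmetic difference is bookkeeping---the paper first passes to an increasing subfamily $\{M_n\}$ with $wcl(M_n)\subseteq M_{n+1}$ and sets $W_{n+2}=M_{n+2}\setminus wcl(M_n)$, whereas you subtract the full finite union $\bigcup_{i<n}C_i$---but the substance, including where strong tautness enters, is identical.
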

\begin{proof}
Take an admissible  countable  family $\{L_n \}_{n\in \mb{N}}\subseteq \mc{L}_X$ covering $X$. We may assume that this family is increasing.
 By tautness, for each $n\in \mb{N}$ there exists $m\in \mb{N}$ such that  
$wcl(L_n) \subseteq L_m$. By choosing a suitable subfamily, we get an admissible countable increasing covering $\{M_n \}_{n\in \mb{N}}$ of $X$ with property  $wcl(M_n) \subseteq M_{n+1}$.

Finally, we define a locally finite covering 
$$W_1=M_1, W_2=M_2, W_{n+2}=M_{n+2} \setminus 
wcl(M_n), \quad n\in \mb{N}$$
 of $X$. By strong tautness, this covering  consists only of smops.
\end{proof}

\subsection{Open bornological universes.}

\begin{lem}  \label{258}
For any locally small space $(X, \mc{L}_X)$, we have $(\mc{L}_X^{swo})^o=\mc{L}_X^{wo}$.
\end{lem}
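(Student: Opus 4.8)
The plan is to prove the equality by establishing the two inclusions separately, after unwinding the definition: $(\mc{L}_X^{swo})^o$ consists of exactly those $Y\subseteq X$ with $Y\cap_1\mc{L}_X^{swo}\subseteq\mc{L}_X^{swo}$. Throughout I would lean on two structural facts recorded earlier. First, that $\mc{L}_X^{wo}=\topo(\mc{L}_X)$ is a genuine topology, hence closed under finite intersections and arbitrary unions; this holds because $\mc{L}_X$, being closed under finite intersections by (LS2), is a base for the topology it generates, which is precisely why the union formula for $\mc{L}_X^{wo}$ is valid. Second, that $\mc{L}_X\subseteq\mc{L}_X^{swo}$ together with the defining identity $\mc{L}_X^{swo}=\mc{L}_X^{wo}\cap\mc{L}_X^s$, where $\mc{L}_X^s$ is a bornology and so is downward closed.

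For the inclusion $\mc{L}_X^{wo}\subseteq(\mc{L}_X^{swo})^o$, I would take an arbitrary $V\in\mc{L}_X^{wo}$ and verify that it is compatible with $\mc{L}_X^{swo}$. Given any $S\in\mc{L}_X^{swo}$, the set $V\cap S$ is weakly open because $\mc{L}_X^{wo}$ is closed under finite intersections, and it is small because $V\cap S\subseteq S$ with $S$ small and $\mc{L}_X^s$ downward closed. Hence $V\cap S\in\mc{L}_X^{wo}\cap\mc{L}_X^s=\mc{L}_X^{swo}$, which is exactly the required compatibility, so $V\in(\mc{L}_X^{swo})^o$.

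For the reverse inclusion $(\mc{L}_X^{swo})^o\subseteq\mc{L}_X^{wo}$, I would take $Y$ compatible with $\mc{L}_X^{swo}$. Since $\mc{L}_X\subseteq\mc{L}_X^{swo}$, compatibility yields $Y\cap L\in\mc{L}_X^{swo}\subseteq\mc{L}_X^{wo}$ for every $L\in\mc{L}_X$. Because $\mc{L}_X$ covers $X$ by (LS3), I can write $Y=Y\cap X=\bigcup_{L\in\mc{L}_X}(Y\cap L)$ as a union of weakly open sets, which lies in $\mc{L}_X^{wo}$ since a topology is closed under arbitrary unions. This gives $Y\in\mc{L}_X^{wo}$ and finishes the argument.

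I do not anticipate a serious obstacle; the result is essentially bookkeeping once the right facts are assembled, and it is noteworthy that neither admissibility nor the finer families $\mc{L}_X^o$ enter the proof at all. The one point deserving care is the repeated reliance on $\mc{L}_X^{wo}$ being a bona fide topology—its closure under finite intersection in the first inclusion and under arbitrary union in the second—so I would be careful to invoke that property itself rather than merely the union formula defining $\mc{L}_X^{wo}$.
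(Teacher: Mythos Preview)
Your proof is correct and follows essentially the same approach as the paper: the inclusion $\mc{L}_X^{wo}\subseteq(\mc{L}_X^{swo})^o$ is argued identically, and for the reverse inclusion the paper records the compact chain $(\mc{L}_X^{swo})^o \subseteq (\mc{L}_X^{swo})^{wo} \subseteq (\mc{L}_X^{wo})^{wo} = \mc{L}_X^{wo}$, which is precisely your argument (use $\mc{L}_X\subseteq\mc{L}_X^{swo}$ to cover $X$ and write $Y$ as a union of weakly open pieces) in condensed form.
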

\begin{proof}
We have $(\mc{L}_X^{swo})^o \subseteq (\mc{L}_X^{swo})^{wo} \subseteq
(\mc{L}_X^{wo})^{wo} =\mc{L}_X^{wo}$.

On the other hand, if $W\in \mc{L}_X^{wo}$, then for $Z\in \mc{L}_X^{swo}$ we have
$W\cap Z \in \mc{L}_X^{swo}$. This means $W\in (\mc{L}_X^{swo})^o$.
\end{proof}

\begin{prop}
For a locally small space $(X, \mc{L}_X)$, the following are equivalent:
\begin{enumerate}
\item[$(1)$]   \quad  $\mc{L}_X=\mc{L}_X^{swo}$,
\item[$(2)$]  \quad $\mc{L}^o_X=\mc{L}_X^{wo}$.
\end{enumerate}
\end{prop}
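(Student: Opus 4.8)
The plan is to prove both implications directly from the structural facts already established, without introducing any new constructions; the whole statement is a bookkeeping consequence of the definition of $\mc{L}_X^{swo}$, the earlier Proposition $\mc{L}_X=\mc{L}_X^s\cap\mc{L}_X^o$, and Lemma \ref{258}.

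For $(1)\Rightarrow(2)$, I would start from the hypothesis $\mc{L}_X=\mc{L}_X^{swo}$ and apply the compatibility operation $(\cdot)^o$ to both sides. Since the symbol $\mc{L}_X^o$ is by definition exactly $(\mc{L}_X)^o$, substituting $\mc{L}_X^{swo}$ for $\mc{L}_X$ yields $\mc{L}_X^o=(\mc{L}_X^{swo})^o$. Lemma \ref{258} identifies the right-hand side as $\mc{L}_X^{wo}$, so $\mc{L}_X^o=\mc{L}_X^{wo}$, which is precisely $(2)$.

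For $(2)\Rightarrow(1)$, I would unfold the definition $\mc{L}_X^{swo}=\mc{L}_X^{wo}\cap\mc{L}_X^s$ and feed in the hypothesis $\mc{L}_X^o=\mc{L}_X^{wo}$, obtaining $\mc{L}_X^{swo}=\mc{L}_X^o\cap\mc{L}_X^s$. By the earlier Proposition we have $\mc{L}_X=\mc{L}_X^s\cap\mc{L}_X^o$, so the right-hand side is exactly $\mc{L}_X$, and therefore $\mc{L}_X^{swo}=\mc{L}_X$, which is $(1)$. (One could also phrase this as: $\mc{L}_X\subseteq\mc{L}_X^{swo}$ always holds, and $(2)$ upgrades $\mc{L}_X^{swo}$ to $\mc{L}_X^o\cap\mc{L}_X^s=\mc{L}_X$, giving the reverse inclusion.)

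The only point requiring care — and it is the single potential obstacle — is the operator bookkeeping in the first implication: one must recognize that $\mc{L}_X^o$ is literally $(\cdot)^o$ applied to the family $\mc{L}_X$, so that the equality of families $\mc{L}_X=\mc{L}_X^{swo}$ may be substituted inside the operator and Lemma \ref{258} then applied verbatim. The second implication is a purely set-theoretic identity once the hypothesis is substituted, depending only on the already-proved formula $\mc{L}_X=\mc{L}_X^s\cap\mc{L}_X^o$.
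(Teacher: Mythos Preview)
Your proof is correct and follows essentially the same route as the paper: for $(1)\Rightarrow(2)$ the paper also applies $(\cdot)^o$ to the hypothesis and invokes Lemma~\ref{258} to get $\mc{L}_X^o=(\mc{L}_X^{swo})^o=\mc{L}_X^{wo}$, and for $(2)\Rightarrow(1)$ it uses the same chain $\mc{L}_X=\mc{L}_X^s\cap\mc{L}_X^o=\mc{L}_X^s\cap\mc{L}_X^{wo}=\mc{L}_X^{swo}$.
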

\begin{proof}
$(1) \Rightarrow (2)$ 
By Lemma \ref{258}, we have $\mc{L}^o_X=    (\mc{L}_X^{swo})^o   =\mc{L}_X^{wo}$. \\
$(2) \Rightarrow (1)$ 
We have $\mc{L}_X= \mc{L}_X^s \cap \mc{L}_X^o= \mc{L}_X^s \cap \mc{L}_X^{wo}= \mc{L}_X^{swo}$. 
\end{proof}

\begin{defi}[cf. \cite{Pie2},  Prop. 2.2.71 and   \cite{Pie3}, Prop. 2.1.31]
A locally small space $(X,\mc{L}_X)$ is called \textbf{partially topological} if
it satisfies one of the equivalent conditions in the previous proposition.
The full subcategory in \textbf{LSS} of partially topological locally small spaces will be denoted by $\mathbf{LSS}_{pt}$.
\end{defi}

\begin{defi}[cf. \cite{Pie3}, Def. 2.2.29]
A \textbf{bornological universe} is a triple $(X, \tau_X, \mc{B}_X)$.
A \textbf{basis} of bornology $\mc{B}_X$ is a family $\mb{B} \subseteq \mc{B}_X$ such that every element of  $\mc{B}_X$ is contained in some element of $\mb{B}$.
If all elements of  $\mb{B}$ are open in $(X,\tau_X)$, then we call it an \textbf{open basis}.

Assume that another bornological universe $(Y, \tau_Y, \mc{B}_Y)$ is given.
Then a mapping $f: X\to Y$ is called \textbf{bounded} if 
$f(\mc{B}_X)\subseteq  \mc{B}_Y$.
\end{defi}

\begin{rem}
The category of bornological universes having open bases of the bornology (shortly: open bornological universes) and their bounded continuous mappings  will be denoted by  \textbf{OpenBorUniv}, in accordance with \cite{SV}. This category  was 
denoted \textbf{UBorOB} in  \cite{Pie3}.
\end{rem}
We give a strong restatement of Proposition 2.1.31 in \cite{Pie3}.
\begin{thm} \label{2ki}
The constructs $\mathbf{LSS}_{pt}$ and $\mathbf{OpenBorUniv}$ are concretely isomorphic.
\end{thm}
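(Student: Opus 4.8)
### Proof Plan for Theorem \ref{2ki}

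The plan is to exhibit a concrete isomorphism of constructs by constructing functors in both directions and checking they are mutually inverse on objects while preserving the underlying-set structure. Recall that a partially topological locally small space $(X, \mc{L}_X)$ is characterized by $\mc{L}^o_X = \mc{L}^{wo}_X$, equivalently $\mc{L}_X = \mc{L}^{swo}_X$. An open bornological universe is a triple $(X, \tau_X, \mc{B}_X)$ where $\tau_X$ is a topology, $\mc{B}_X$ is a bornology admitting an open basis, and morphisms are the bounded continuous maps. The natural correspondence sends $(X, \mc{L}_X)$ to the triple $(X, \mc{L}^{wo}_X, \mc{L}^s_X)$: the topology is the weak topology generated by the smops, and the bornology is the family of small sets. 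Conversely, given $(X, \tau_X, \mc{B}_X)$, I would set $\mc{L}_X = \tau_X \cap \mc{B}_X$, the family of bounded open sets.

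First I would verify that the forward assignment lands in $\mathbf{OpenBorUniv}$. The family $\mc{L}^{wo}_X = \topo(\mc{L}_X)$ is a topology by definition, and $\mc{L}^s_X = \bor(\mc{L}_X)$ is a bornology by construction. The key point is that $\mc{L}_X$ itself is an open basis for $\mc{L}^s_X$: every small set is by definition contained in some smop, and each smop is weakly open, so $\mc{L}_X \subseteq \mc{L}^{wo}_X$ gives the openness of the basis elements. Second, for the reverse assignment, I would check that $(X, \tau_X \cap \mc{B}_X)$ satisfies (LS1)--(LS3): the empty set is bounded and open, finite unions and intersections of bounded open sets are bounded and open (using both bornology axioms and the topology axioms), and the open basis of $\mc{B}_X$ covers $X$ since a basis consists of sets whose union contains every bounded set, hence covers $X$ when $X = \bigcup \mc{B}_X$.

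The main obstacle, and the crux of the argument, is showing the two assignments are mutually inverse on objects, which is exactly where partial topologicality is needed. Starting from a partially topological space and returning, I must show $\mc{L}^{wo}_X \cap \mc{L}^s_X = \mc{L}_X$; but by definition $\mc{L}^{swo}_X = \mc{L}^{wo}_X \cap \mc{L}^s_X$, and partial topologicality is precisely the condition $\mc{L}_X = \mc{L}^{swo}_X$, so this recovers $\mc{L}_X$. In the other direction, starting from $(X, \tau_X, \mc{B}_X)$, setting $\mc{L}_X = \tau_X \cap \mc{B}_X$, I must recover $\tau_X = \mc{L}^{wo}_X$ and $\mc{B}_X = \mc{L}^s_X$. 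The equality $\mc{B}_X = \mc{L}^s_X$ follows because $\mc{L}_X$ contains the open basis of $\mc{B}_X$ (each basis element is open and bounded, hence a smop), so $\bor(\mc{L}_X)$ both contains and is contained in $\mc{B}_X$. The equality $\tau_X = \mc{L}^{wo}_X$ requires that $\tau_X$ be generated by its bounded open members; here I would use that the open basis of the bornology covers $X$ and that every open set, intersected with the bounded sets, is a union of bounded open sets, so $\topo(\tau_X \cap \mc{B}_X) = \tau_X$.

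Finally, I would match the morphisms. On both sides the morphisms are the bounded continuous maps, and I would check that $f$ is bounded continuous between locally small spaces (satisfying condition $(bc)$: $f(\mc{L}^s_X) \subseteq \mc{L}^s_Y$ together with $f^{-1}(\mc{L}^o_Y) \subseteq \mc{L}^o_X$) if and only if $f$ is bounded in the bornological sense ($f(\mc{B}_X) \subseteq \mc{B}_Y$) and continuous in the topological sense (for $\tau_X, \tau_Y$). Boundedness matches directly via $\mc{B}_X = \mc{L}^s_X$, and topological continuity matches the weak-open preimage condition via $\tau_X = \mc{L}^{wo}_X = \mc{L}^o_X$, the last equality holding precisely by partial topologicality. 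Since both functors are identity on underlying sets and underlying functions, the resulting isomorphism is concrete.
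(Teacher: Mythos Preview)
Your proposal is correct and follows essentially the same approach as the paper: you define the same pair of functors $(X,\mc{L}_X)\mapsto (X,\mc{L}^{wo}_X,\mc{L}^s_X)$ and $(X,\tau,\mc{B})\mapsto (X,\tau\cap\mc{B})$, verify they are mutually inverse using $\mc{L}_X=\mc{L}^{swo}_X$ in one direction and the open-basis property to recover $\tau$ and $\mc{B}$ in the other, and match the morphisms via the $(bc)$ characterization. If anything you are slightly more explicit than the paper in checking (LS1)--(LS3) for $\tau\cap\mc{B}$ and in spelling out why the morphism classes coincide.
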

\begin{proof}
We have a concrete functor $ubor(X, \mc{L}_X)=(X, \mc{L}_X^{wo},  \mc{L}_X^s)$
from $\mathbf{LSS}_{pt}$ to $\mathbf{OpenBorUniv}$ 
 and a concrete functor $lss(X, \tau,\mc{B})=(X, \tau \cap \mc{B})$
from $\mathbf{OpenBorUniv}$ to $\mathbf{LSS}_{pt}$.

1. In both categories the morphisms are the bounded continuous mappings (with the same meaning of ``bounded continuous'').

2. The functor $lss \circ ubor$ is the identity on $\mathbf{LSS}_{pt}$, since
$\mc{L}_X= \mc{L}_X^s \cap \mc{L}_X^{wo}$.

3. The functor $ubor \circ lss$ is the identity on $\mathbf{OpenBorUniv}$.
 Indeed, since $\tau \cap \mc{B}$ is an open basis of the bornology $\mc{B}$, we get
$\bor(\tau \cap \mc{B})=\mc{B}$. Obviously $\topo(\tau \cap \mc{B})\subseteq \tau $.
If $U \in \tau$, then for each $u\in U$ there exists some $V_u\in \tau\cap\mc{B}$ such that $u\in V_u$. So $U=\bigcup_{u \in U}  (V_u \cap U) \in \topo(\tau \cap \mc{B})$. 
That is why $\tau \subseteq \topo(\tau \cap \mc{B})$. Hence 
$ubor \circ lss(X,\tau, \mc{B})=(X,\tau, \mc{B})$.  
\end{proof}

\subsection{Topological-like spaces.}

\begin{defi}
A locally small space $(X, \mc{L}_X)$ will be called \textbf{small partially topological} 
(or  \textbf{topological-like}) if $\mc{L}_X$ is a topology (equivalently: $\mc{L}_X = \mc{L}^{wo}_X$).
The full subcategory in \textbf{LSS} of small partially topological spaces will be denoted by $\mathbf{SS}_{pt}$ (it is concretely isomorphic to the category of small partially topological gtses and their continuous mappings).
\end{defi}

\begin{prop}[cf. \cite{Pie3}, Prop. 2.3.18]
The traditional category $\mathbf{Top}$ of topological spaces and their continuous mappings is concretely isomorphic to the category $\mathbf{SS}_{pt}$ of partially topological small spaces and their (strictly) continuous mappings.  
\end{prop}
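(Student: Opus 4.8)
The plan is to exhibit two mutually inverse concrete functors $F\colon \Top \to \mathbf{SS}_{pt}$ and $G\colon \mathbf{SS}_{pt}\to\Top$, each acting as the identity on underlying sets and on underlying maps. Once the object assignments are seen to be inverse bijections and the morphism classes are seen to coincide, the claimed concrete isomorphism follows automatically.

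First I would set up the object correspondence. Given a topological space $(X,\tau)$, put $F(X,\tau)=(X,\tau)$, keeping the same family. A topology contains $\emptyset$ and is closed under finite intersections and finite unions, so it satisfies (LS1)--(LS2); since it contains $X$ it covers $X$, giving (LS3). Moreover $\topo(\tau)=\tau$, because a topology is already closed under arbitrary unions, hence $\tau=\mc{L}_X^{wo}$ and $(X,\tau)$ is small partially topological. Conversely, $G$ sends a topological-like space $(X,\mc{L}_X)$ --- where by definition $\mc{L}_X$ is a topology --- to the topological space $(X,\mc{L}_X)$. On objects $F$ and $G$ are visibly inverse bijections, since an object of $\mathbf{SS}_{pt}$ is precisely a set together with a topology on it.

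Next I would match the morphisms. An object of $\mathbf{SS}_{pt}$ is small, as $X\in\mc{L}_X$ because $\mc{L}_X$ is a topology, so by the Corollary characterising maps between small spaces the $\mathbf{LSS}$-morphisms $f\colon(X,\mc{L}_X)\to(Y,\mc{L}_Y)$ are exactly those with $f^{-1}(\mc{L}_Y)\subseteq\mc{L}_X$. Since $\mc{L}_X$ and $\mc{L}_Y$ are topologies, this is precisely the condition that $f$ be continuous in the classical topological sense. Thus for each pair of objects the hom-sets of $\Top$ and $\mathbf{SS}_{pt}$ coincide as sets of functions. Because both $F$ and $G$ leave the underlying function untouched, identities and composites are preserved trivially, so $F$ and $G$ are genuine functors, mutually inverse, and compatible with the forgetful functors to $\set$; this is exactly a concrete isomorphism of constructs.

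The only point requiring care is the morphism identification, and even there the work is done by the earlier Corollary: it guarantees that for small spaces the three a priori different notions (bounded continuous, continuous, and $f^{-1}(\mc{L}_Y)\subseteq\mc{L}_X$) collapse into one, which on topologies is ordinary continuity. I do not expect a genuine obstacle; the remainder is the routine check that the topology axioms translate termwise into (LS1)--(LS3) together with $\mc{L}_X=\mc{L}_X^{wo}$.
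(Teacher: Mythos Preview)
Your proof is correct and follows essentially the same approach as the paper. The paper's proof is the one-line assertion that the correspondence $(X,\tau)\mapsto (X,\tau,EssFin(\tau))$ is a concrete isomorphism of constructs; under the identification of a small space $(X,\mc{L}_X)$ with the gts $(X,\mc{L}_X,EssFin(\mc{L}_X))$ this is exactly your $F(X,\tau)=(X,\tau)$, and you have simply spelled out the verifications (that $\tau$ satisfies (LS1)--(LS3) with $\mc{L}_X=\mc{L}_X^{wo}$, and that the morphism classes agree via the small-space Corollary) which the paper leaves implicit.
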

\begin{proof}
The correspondence 
$$(X,\tau) \to (X,\tau, EssFin(\tau))$$
gives a concrete isomorphism of constructs.
\end{proof}

\begin{rem}[cf. \cite{Pie2}, Rem. 2.2.63]
A more natural embedding of \textbf{Top} into \textbf{GTS} is given by the correspondence
$$(X,\tau) \to (X,\tau, \mc{P}(\tau)).$$
Most often the gts $(X,\tau, \mc{P}(\tau))$ is not  locally small.
\end{rem}
\subsection{A concrete reflector  and a concrete  coreflector}

\begin{thm}\label{refl}
$(1)$ The functor of smallification  $sm:\mathbf{LSS} \to \mathbf{SS}$ 
defined by $sm(X, \mc{L}_X)=(X, \mc{L}^o_X)$
is a concrete reflector.\\
$(2)$  After identification, it is the restriction of the functor of smallification
$sm:\mathbf{GTS} \to \mathbf{SS}$ considered in Proposition 2.3.16 of \cite{Pie2}. 
\end{thm}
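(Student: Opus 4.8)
The plan is to verify the two assertions in turn, treating (1) as the substantive part and (2) as an identification check. For (1), I must show three things: that $sm$ is a well-defined functor $\mathbf{LSS}\to\mathbf{SS}$, that each object lands in $\mathbf{SS}$, and that $(X,\mc{L}^o_X)$ together with the identity map $\eta_X=\mathrm{id}_X$ is a reflection of $(X,\mc{L}_X)$ along the inclusion $\mathbf{SS}\hookrightarrow\mathbf{LSS}$. First I would check that $(X,\mc{L}^o_X)$ is a small space: by the Lemma on compatible families, $\mc{L}^o_X=(\mc{L}_X)^o$ satisfies $(\mc{L}^o_X)^o=\mc{L}^o_X$, so the open sets of $(X,\mc{L}^o_X)$ are again $\mc{L}^o_X$; thus this space satisfies condition (3) of Theorem~2.3 (namely $\mc{L}_X=\mc{L}^o_X$ for the new space), hence it is small. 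I would then confirm $sm$ is the identity on underlying sets and that a bounded continuous $f\colon(X,\mc{L}_X)\to(Y,\mc{L}_Y)$ induces a bounded continuous map on the smallifications; by the Corollary after Lemma~\ref{bcsc}, for small spaces bounded continuity is equivalent to $f^{-1}(\mc{L}^o_Y)\subseteq\mc{L}^o_X$, which is exactly the condition $(bc)$ already available for bounded continuous $f$. So functoriality is essentially free from the earlier propositions.

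The heart of (1) is the universal property. Given any small space $(Z,\mc{L}_Z)$ (so $\mc{L}_Z=\mc{L}^o_Z$) and a bounded continuous $f\colon(X,\mc{L}_X)\to(Z,\mc{L}_Z)$, I would show $f$ factors uniquely through $\eta_X\colon(X,\mc{L}_X)\to(X,\mc{L}^o_X)$. Uniqueness is immediate because $\eta_X$ is the identity on points, so the only candidate factoring map is $f$ itself viewed as a map out of $(X,\mc{L}^o_X)$. The content is that $f\colon(X,\mc{L}^o_X)\to(Z,\mc{L}_Z)$ is still bounded continuous. Continuity means $f^{-1}(\mc{L}_Z)\subseteq(\mc{L}^o_X)^o=\mc{L}^o_X$; since $f$ was continuous as a map from $(X,\mc{L}_X)$ we already have $f^{-1}(\mc{L}_Z)\subseteq\mc{L}^o_X$, and this is the same target family, so continuity transfers verbatim. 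For boundedness I would use that the small sets of $(X,\mc{L}^o_X)$ form the bornology $\mathrm{bor}(\mc{L}^o_X)$, which equals $\mc{P}(X)$ if $X$ is small but in general I should instead invoke condition $(bc)$: $f(\mc{L}_X^s)\subseteq\mc{L}_Z^s$ already holds, and since the small sets only enlarge or stay the same under passage to $\mc{L}^o_X$ while the target is unchanged, the image condition is preserved. I would phrase this carefully using the characterization $(bc)$ from the Proposition preceding Lemma~\ref{bcsc}.

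For (2), I would appeal directly to the cited Proposition~2.3.16 of~\cite{Pie2}. Under the identification of Theorem~\ref{1ki} sending $(X,\mc{L}_X)$ to the gts $(X,\mc{L}^o_X,EF(\mc{L}^o_X,\mc{L}_X))$, the general smallification functor on $\mathbf{GTS}$ sends a gts to the small gts on its underlying set determined by its small open sets. By Lemma~\ref{smops} the small open sets of this gts are exactly $\mc{L}_X$, so the general $sm$ produces the small space whose smops are $\mc{L}^o_X$ — precisely $(X,\mc{L}^o_X)$ as computed above via the identification in the Remark after Lemma~\ref{smops}. Hence the restriction of the $\mathbf{GTS}$-smallification to the subcategory $\mathbf{LSS}$ agrees object-for-object and morphism-for-morphism with our $sm$.

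The main obstacle I anticipate is the boundedness half of the factorization in the universal property: one must be sure that replacing $\mc{L}_X$ by the larger family $\mc{L}^o_X$ does not break the image condition $f(\mc{L}_X^s)\subseteq\mc{L}_Z^s$, and this requires comparing the bornology generated by $\mc{L}^o_X$ with that generated by $\mc{L}_X$. The clean way around it is to note that for the reflection we only need $\eta_X$ itself to be bounded continuous and the factoring map to inherit condition $(bc)$; since condition $(bc)$ refers to $\mc{L}^o_X$ (unchanged) and $\mc{L}^s_X$, and since for the target small space $\mc{L}_Z=\mc{L}^o_Z$, the relevant inclusions are exactly the ones guaranteed by the earlier characterization, so no genuinely new estimate is needed — only a careful bookkeeping of which family plays which role.
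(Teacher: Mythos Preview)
Your outline for (1) is correct and follows the same route as the paper, but your boundedness discussion is tangled. The point you are circling around is in fact trivial: since $(Z,\mc{L}_Z)$ is small, we have $Z\in\mc{L}_Z$, so \emph{every} map into $(Z,\mc{L}_Z)$ is bounded. Equivalently, $(X,\mc{L}^o_X)$ is itself small (as you already noted, $X\in\mc{L}^o_X$), so its bornology is all of $\mc{P}(X)$, and likewise $\mc{L}_Z^s=\mc{P}(Z)$; the inclusion $f(\mc{P}(X))\subseteq\mc{P}(Z)$ is automatic. Your sentence ``the small sets only enlarge \dots\ so the image condition is preserved'' has the logic reversed: enlarging the \emph{source} bornology makes condition $(bc)$ harder, not easier; what actually saves you is that the target bornology is everything. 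No ``careful bookkeeping'' is needed here.

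For (2) you have misidentified the functor. The smallification $sm:\mathbf{GTS}\to\mathbf{SS}$ of \cite{Pie2} is \emph{not} ``take the small open sets''; it is defined by
\[
sm(X,Op_X,Cov_X)=(X,Op_X,EssFin(Op_X)),
\]
i.e.\ keep $Op_X$ and declare every essentially finite open family admissible. Applied to the gts $(X,\mc{L}^o_X,EF(\mc{L}^o_X,\mc{L}_X))$ this yields $(X,\mc{L}^o_X,EssFin(\mc{L}^o_X))$, which under the identification (using $\mc{L}^{oo}_X=\mc{L}^o_X$) is precisely the small space $(X,\mc{L}^o_X)$. Your appeal to Lemma~\ref{smops} to get $Smop_X=\mc{L}_X$ is a red herring: that computation is irrelevant to the comparison, and if the $\mathbf{GTS}$-smallification really extracted $Smop_X$ you would land on $(X,\mc{L}_X)$, not $(X,\mc{L}^o_X)$.
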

\begin{proof}
$(1)$  The reflection for $(X, \mc{L}_X)$ is the mapping
$$ r_X = id_X: (X, \mc{L}_X)   \to   (X, \mc{L}^o_X). $$
Indeed, for any morphism $f:(X, \mc{L}_X) \to (Y, \mc{L}_Y)$ in \textbf{LSS} into a small space $(Y, \mc{L}_Y)$ there exists a unique morphism $\hat{f}:(X, \mc{L}^o_X) \to (Y, \mc{L}_Y)$ such that $f=\hat{f} \circ r_X$, where $\hat{f}=f$ as functions.
By Proposition 4.22 of \cite{AHS}, all the (identity-carried) reflections form a functor that is a concrete reflector.

$(2)$ The functor $sm:\mathbf{GTS} \to \mathbf{SS}$ is defined  by 
$$ sm(X, Op_X, Cov_X)=(X, Op_X, EssFin(Op_X)).$$
In our situation, the object
$(X,\mc{L}^o_X)$ is identified with the gts 
$$(X,\mc{L}^{oo}_X,EF(\mc{L}^{oo}_X,\mc{L}^o_X))=(X,\mc{L}^o_X,EssFin(\mc{L}^{o}_X)).$$
\end{proof}

\begin{thm} \label{korefl}
$(1)$ The functor of partial topologization   $pt:\mathbf{LSS} \to \mathbf{LSS}_{pt}$
defined by $pt(X, \mc{L}_X)=(X, \mc{L}^{swo}_X)$
 is a concrete coreflector.\\
$(2)$ After identification, it is  the restriction of the functor $pt:\mathbf{GTS} \to \mathbf{GTS}_{pt}$ considered in Definition 4.1 of \cite{PW}.
\end{thm}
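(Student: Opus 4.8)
The plan is to prove part (1) by exhibiting, for each locally small space $(X,\mc{L}_X)$, an identity-carried coreflection arrow $c_X=id_X:(X,\mc{L}_X^{swo})\to(X,\mc{L}_X)$ and verifying its universal property, then to invoke the dual of Proposition 4.22 of \cite{AHS} to assemble these arrows into a concrete coreflector; part (2) will follow by unwinding the gts-identification. First I would check that $pt(X,\mc{L}_X)=(X,\mc{L}_X^{swo})$ actually lies in $\mathbf{LSS}_{pt}$. Writing $\mc{M}=\mc{L}_X^{swo}=\mc{L}_X^{wo}\cap\mc{L}_X^s$, the family $\mc{M}$ satisfies (LS1)--(LS3) since $\mc{L}_X\subseteq\mc{M}$, while intersections and unions stay in $\mc{M}$ because $\mc{L}_X^{wo}$ is a topology and $\mc{L}_X^s$ a bornology. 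From $\mc{L}_X\subseteq\mc{M}\subseteq\mc{L}_X^{wo}$ and monotonicity of $\topo$ I get $\mc{M}^{wo}=\mc{L}_X^{wo}$, and from $\mc{L}_X\subseteq\mc{M}\subseteq\mc{L}_X^s$ and monotonicity of $\bor$ I get $\mc{M}^s=\mc{L}_X^s$; hence $\mc{M}^{swo}=\mc{M}^{wo}\cap\mc{M}^s=\mc{L}_X^{wo}\cap\mc{L}_X^s=\mc{M}$, so $(X,\mc{M})$ is partially topological.

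Next, $c_X=id_X$ is bounded continuous: boundedness holds because every $M\in\mc{L}_X^{swo}\subseteq\mc{L}_X^s$ lies in a member of $\mc{L}_X$, and continuity holds because $\mc{L}_X\subseteq\mc{L}_X^{swo}$ together with closure of $\mc{L}_X^{swo}$ under intersection gives $\mc{L}_X\subseteq(\mc{L}_X^{swo})^o$. For the universal property I would take any partially topological $(Y,\mc{L}_Y)$ and any bounded continuous $f:(Y,\mc{L}_Y)\to(X,\mc{L}_X)$; since $c_X$ is identity-carried, the only candidate factorization is $f$ itself, so uniqueness is automatic and it remains to verify that $f:(Y,\mc{L}_Y)\to(X,\mc{L}_X^{swo})$ is bounded continuous. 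Using the characterization $(bc)$, boundedness $f(\mc{L}_Y^s)\subseteq(\mc{L}_X^{swo})^s=\mc{L}_X^s$ is exactly the boundedness of the original $f$, while continuity amounts to $f^{-1}((\mc{L}_X^{swo})^o)\subseteq\mc{L}_Y^o$, which by Lemma \ref{258} reads $f^{-1}(\mc{L}_X^{wo})\subseteq\mc{L}_Y^o$.

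I expect this last continuity step to be the crux. Continuity into the richer target $(X,\mc{L}_X^{swo})$ would fail for a general domain, and it is precisely the partial topologicality of $(Y,\mc{L}_Y)$ that saves the argument: since $f$ is continuous it is weakly continuous, so $f^{-1}(\mc{L}_X^{wo})\subseteq\mc{L}_Y^{wo}$, and because $(Y,\mc{L}_Y)$ is partially topological we have $\mc{L}_Y^o=\mc{L}_Y^{wo}$, whence $f^{-1}(\mc{L}_X^{wo})\subseteq\mc{L}_Y^{wo}=\mc{L}_Y^o$, as required. In other words, it is the fact that $\mc{L}_Y^o$ is a genuine topology, closed under arbitrary unions, that upgrades mere weak continuity to continuity into the partially topologized codomain.

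With the universal property established, the dual of Proposition 4.22 of \cite{AHS} promotes the identity-carried arrows $c_X$ into a concrete coreflector and simultaneously fixes the action $pt(f)=f$ on morphisms (well-defined because every $(X,\mc{L}_X^{swo})$ lies in $\mathbf{LSS}_{pt}$), completing part (1). For part (2) I would unwind the identification: the object $(X,\mc{L}_X^{swo})$ corresponds, by Lemma \ref{258} and the analogue of Lemma \ref{smops} applied to $(X,\mc{L}_X^{swo})$, to the gts $\bigl(X,\mc{L}_X^{wo},EF(\mc{L}_X^{wo},\mc{L}_X^{swo})\bigr)$, and I would compare this with the value of the gts-level functor $pt$ of Definition 4.1 of \cite{PW} on the gts $\bigl(X,\mc{L}_X^o,EF(\mc{L}_X^o,\mc{L}_X)\bigr)$ identified with $(X,\mc{L}_X)$. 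Matching the open families via Lemma \ref{258} and the admissible coverings via the $EF$-descriptions is then a routine unwinding of definitions.
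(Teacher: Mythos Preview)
Your argument for part (1) is correct and matches the paper's proof closely; you in fact supply more detail than the paper does (verifying that $(X,\mc{L}_X^{swo})$ is partially topological and that $c_X$ is a morphism), while the key steps---the equalities $(\mc{L}_X^{swo})^s=\mc{L}_X^s$ and $(\mc{L}_X^{swo})^o=\mc{L}_X^{wo}$ from Lemma \ref{258}, together with the AHS proposition---are the same. (The paper cites Proposition 4.27 of \cite{AHS}, which is precisely the dual of the 4.22 you invoke.)

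For part (2) you set up the right comparison: the gts attached to $(X,\mc{L}_X^{swo})$ is $(X,\mc{L}_X^{wo},EF(\mc{L}_X^{wo},\mc{L}_X^{swo}))$, and one must show this coincides with $\langle EF(\mc{L}_X^o,\mc{L}_X)\cup EssFin(\mc{L}_X^{wo})\rangle_X$. However, calling this a ``routine unwinding of definitions'' undersells it. The inclusion $\supseteq$ is immediate, but the inclusion $\subseteq$ is not a mere unpacking: the paper argues it by taking $\mc{U}\in EF(\mc{L}_X^{wo},\mc{L}_X^{swo})$, observing that each $\mc{U}\cap_1 L$ with $L\in\mc{L}_X$ lies in $EssFin(\mc{L}_X^{wo})$, and then climbing from $\mc{U}\cap_1\mc{L}_X$ back up to $\mc{U}$ via the stability, transitivity, and saturation axioms of a gts. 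This is short but genuinely uses the gts machinery rather than just the $EF$-description, so you should either carry it out or at least indicate which axioms are in play.
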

\begin{proof}
$(1)$
The coreflection for  $(X, \mc{L}_X)$  is the mapping 
$$  c_X = id_X: (X, \mc{L}^{swo}_X)   \to   (X, \mc{L}_X). $$
Indeed,  for any morphism $f:(Y, \mc{L}_Y) \to (X, \mc{L}_X)$ in \textbf{LSS} from a partially topological space $(Y, \mc{L}_Y)$ there exists a unique morphism 
$\hat{f}:(Y, \mc{L}_Y) \to (X, \mc{L}^{swo}_X)$ such that $f=c_X \circ \hat{f}$, where $\hat{f}=f$ as functions. That $\hat{f}$ is a morphism follows from the equality
$\mc{L}_X^s=(\mc{L}_X^{swo})^s$ and Lemma \ref{258}.
By Proposition 4.27 of \cite{AHS}, all the (identity-carried) coreflections form a functor that is a concrete coreflector.

$(2)$  
The functor $pt:\mathbf{GTS}\to \mathbf{GTS}_{pt}$ is defined by 
$$pt(X,Cov_X)=(X,\langle Cov_X \cup EssFin(\topo(Op_X))\rangle_X ). $$
We are to check if $EF(\mc{L}^{wo}_X,\mc{L}^{swo}_X)$ is equal to 
$\langle EF(\mc{L}^{o}_X,\mc{L}_X) \cup EssFin(\mc{L}^{wo}_X) \rangle_X$,
the latter family of families will be denoted by $\Psi$.
Clearly, 
$$EF(\mc{L}^{o}_X,\mc{L}_X) \subseteq EF(\mc{L}^{wo}_X,\mc{L}^{swo}_X)
\mbox{ and }EssFin(\mc{L}^{wo}_X) \subseteq  EF(\mc{L}^{wo}_X,\mc{L}^{swo}_X).$$
It remains to check that $EF(\mc{L}^{wo}_X,\mc{L}^{swo}_X)\subseteq \langle EF(\mc{L}^{o}_X,\mc{L}_X) \cup EssFin(\mc{L}^{wo}_X) \rangle_X$.

Let $\mc{U}\in EF(\mc{L}^{wo}_X,\mc{L}^{swo}_X)$. For each $L \in \mc{L}_X$, the family $\mc{U} \cap_1  L$ is weakly open and essentially finite, so $\mc{U}\in EssFin(\mc{L}^{wo}_X)$. Now $(\bigcup \mc{U})\cap_1 \mc{L}_X$ belongs to $\Psi$ by the stability axiom,  $\mc{U}\cap_1 \mc{L}_X \in \Psi$ by the transitivity axiom   and 
$\mc{U} \in \Psi$ by the saturation axiom.
\end{proof}

\begin{rem}
There exist many important functors in mathematics that are  reflectors, but not concrete reflectors (see \cite{AHS}, Examples 4.17 (3) --- (12)). Similarly, there exist many coreflectors that are not concrete coreflectors (see \cite{AHS}, Examples 4.26 (3) and (4)). 
This emphasizes the importance of functors $sm$ and $pt$ above. Notice that the compositions $sm \circ pt$ and $pt \circ sm$ are equal on \textbf{GTS}.
\end{rem}

\section{Building locally definable spaces} 

\subsection{Spaces over sets}   \ \ \ 

\vspace{2mm}
We re-establish the theory of locally definable spaces from \cite{Pie3}, Section 3.

Assume $M$ is any non-empty set. 

\begin{defi}
A \textbf{function sheaf over $M$} on a locally small space  $\mc{X}=(X, \mc{L}_X)$ is a family 
$\mc{F}$ of functions $h:U\to M$, where $U\in \mc{L}^o_X$,
which is closed under:
\begin{enumerate}
\item[a)] restrictions of functions to open subsets $V\subseteq U$,

\item[b)] gluings of compatible families of functions defined on members of  locally essentially finite open families.\\
\end{enumerate}
  
Denote by $FSh(\mc{X},M)$ the family of all function sheaves on a space $\mc{X}$
over a set~$M$.
\end{defi}

\begin{defi}
For function sheaves $\mc{F}\in FSh(\mc{X},M)$, 
$\mc{G}\in FSh(\mc{Y},M)$
and a  mapping $f:X\to Y$, we define the following families
$$f_{*}\mc{F}=\{ h:V\to M |V\in \mc{L}^o_Y,\: h\circ f\in \mc{F}\},  \mbox{ called the \textbf{image} of }\mc{F}\mbox{ by }f, $$
$$ f^{-1}\mc{G}=\mc{G}\circ f=\{h\circ f|h\in \mc{G}\},   \mbox{ called the \textbf{preimage} of   } \mc{G} \mbox{ by }  f.$$
\end{defi}

\begin{defi}
A \textbf{locally small space over $M$} is a pair $(\mc{X}, \mc{O}_X )$, where 
$\mc{X}$ is a locally small space  and $\mc{O}_X$ is  a function sheaf over $M$ on $X$.
A \textbf{morphism} $f:(\mc{X}, \mc{O}_X )\rightarrow (\mc{Y}, \mc{O}_Y)$ \textbf{of locally small spaces  over} $M$ is a  morphism $f: \mc{X}\to \mc{Y}$ in \textbf{LSS} (i. e. a strictly continuous mapping $f: X\rightarrow Y$) such that $\mc{O}_Y\subseteq f_*\mc{O}_X$ (equivalently: $f^{-1}\mc{O}_Y\subseteq \mc{O}_X$).

We get the category $\lss(M)$ of locally small  spaces over $M$ and their morphisms.
\end{defi}

\begin{defi}
For an object $((X,\mc{L}_X),\mc{O}_X)$ of $\lss(M)$ and an open subset $Y\subseteq X$ (i. e. $Y \in \mc{L}^o_X$), we induce:  
  $$\mc{L}_Y= \mc{L}_X \cap_1 Y, \quad \mc{O}_Y= i_{YX}^{-1}\mc{O}_X,$$
  where $i_{YX}: Y \to X$ is the inclusion.
Then $((Y,\mc{L}_Y),\mc{O}_Y)$ will be called an \textbf{open subspace} of  $((X,\mc{L}_X),\mc{O}_X)$   in $\lss(M)$.
\end{defi} 

\begin{defi}
An object $(\mc{X}, \mc{O}_X)$ of $\lss(M)$ is
the \textbf{admissible union} of a family $\{(\mc{X}_i, \mc{O}_i)\}_{i\in I}$  of  locally small spaces  over $M$, 
if  $\displaystyle (X, \mc{L}_X)= \bigcup^a_{i\in I} (X_i, \mc{L}_i)$ and $\mc{O}_X$ is the smallest  function sheaf containing $\bigcup_{i\in I} \mc{O}_i$. We write then
$$(\mc{X}, \mc{O}_X)= \bigcup^a_{i\in I} (\mc{X}_i, \mc{O}_i). $$
If $I$ is finite, such an admissible union will be called a   \textbf{finite open union} of objects of $\lss(M)$.
\end{defi}
\begin{defi}
 An object  of $\lss(M)$  is \textbf{small}, \textbf{connected}, \textbf{regular}, \textbf{paracompact} or \textbf{Lindel\"of} if its underlying locally small space is such.
\end{defi}

\subsection{Structures with topologies}

\begin{defi}[cf. \cite{Pie3}, Def. 3.2.1 and Rem. 3.2.2]
 A \textbf{structure with a topology} (or a \textbf{weakly topological structure}) is a 
pair $(\mc{M},\sigma)$ composed of a (first order, one-sorted) structure $\mc{M}=(M,...)$ in the sense of model theory and a topology $\sigma$  on the underlying set $M$ of $\mc{M}$. This gives the product topologies $\sigma^n$ on Cartesian powers $M^n$ and the induced topologies $\sigma_D$ on definable (with parameters) sets $D\subseteq M^n$.
For a \textbf{topological structure} one assumes that some basis of $\sigma$ is an 
$\mc{M}$-definable family of subsets of $M$.
\end{defi}

\begin{exam}
O-minimal structures (studied in \cite{vdd}) are examples of topological structures.
If $\mc{M}=(M,< ,...)$ is an o-minimal structure with $<$ a binary relation (interdefinable with) a linear order, then
the formula $a<x<b$ gives a definable family of open intervals, which is a basis of the order topology on $M$.
\end{exam}

From now on, fix a  structure with a topology $(\mc{M},\sigma)$, no connection between $\sigma$ and primitive relations and functions of $\mc{M}$ is assumed. 

\begin{defi}[cf. \cite{Pie3}, Def. 3.2.7]  \label{ad}
For each definable (with parameters) set $D\subseteq M^n$, 
 we  set:
 \begin{enumerate}
 \item[(i)] the \textbf{family of smops} $\mc{L}_D$ of $D$ is the family of  $\sigma_D$-open, $\mc{M}$-definable subsets of $D$,
 \item[(ii)] the \textbf{structure sheaf} $\mc{DC}_D$ of $D$ is the function sheaf of all $\mc{M}$-definable  $(\sigma_D,\sigma)$-continuous functions 
from respective  $U\in  \mc{L}_D$  into $M$.
\end{enumerate}
Hence $((D,\mc{L}_D), \mc{DC}_D)$ becomes a small  object of $\lss(M)$. 
\end{defi}

\begin{fact}\label{proj}
For each definable $D\subseteq M^n$, all the projections $\pi_i:D\to M$ ($i=1,...,n$) have the following properties:
\begin{enumerate}
\item[$(1)$]  are $\mc{M}$-definable $(\sigma_D, \sigma)$-continuous,
\item[$(2)$]  belong to $\mc{DC}_D$,
\item[$(3)$]  are morphisms of   $\lss(M)$. 
\end{enumerate}
\end{fact}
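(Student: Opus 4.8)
The plan is to prove each of the three properties in turn, observing that (1) is the substantive claim and (2), (3) follow from it almost immediately via the definitions given in the excerpt. Fix a definable set $D\subseteq M^n$ and a coordinate index $i\in\{1,\dots,n\}$, and consider the projection $\pi_i:D\to M$ sending a tuple to its $i$-th coordinate.

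For property (1), I would first check $\mc{M}$-definability. The graph of $\pi_i$ inside $D\times M$ is cut out by the formula expressing that the last coordinate equals the $i$-th coordinate of the tuple, intersected with the defining formula of $D$; since $D$ is definable with parameters and equality is available in any structure, $\pi_i$ is $\mc{M}$-definable. Next I would check $(\sigma_D,\sigma)$-continuity. Here the key point is that $\sigma_D$ is, by Definition 3.8(i)'s setup, the topology induced on $D$ from the product topology $\sigma^n$ on $M^n$, and $\pi_i:M^n\to M$ is continuous for $\sigma^n\to\sigma$ essentially by the definition of the product topology. Restricting the domain to the subspace $D$ preserves continuity, so $\pi_i:D\to M$ is $(\sigma_D,\sigma)$-continuous. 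This is the main (and only real) obstacle, but it is mild: it amounts to unwinding that $\sigma_D$ is the subspace topology of a product topology and that coordinate projections are continuous in the product topology.

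For property (2), I would invoke Definition \ref{ad}(ii) directly: $\mc{DC}_D$ is defined as the function sheaf of all $\mc{M}$-definable $(\sigma_D,\sigma)$-continuous functions from members of $\mc{L}_D$ into $M$. Taking $U=D$ (which lies in $\mc{L}_D$ since $D$ is $\sigma_D$-open in itself and $\mc{M}$-definable), property (1) says exactly that $\pi_i$ meets both membership requirements, so $\pi_i\in\mc{DC}_D$.

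For property (3), I would use that a morphism in $\lss(M)$ from $((D,\mc{L}_D),\mc{DC}_D)$ to $((M,\mc{L}_M),\mc{DC}_M)$ is a strictly continuous (equivalently, by Lemma \ref{bcsc}, bounded continuous) map $f$ with $\mc{DC}_M\subseteq f_*\mc{DC}_D$, equivalently $f^{-1}\mc{DC}_M\subseteq\mc{DC}_D$. I would verify bounded continuity of $\pi_i$ (preimages of smops of $M$ are smops of $D$, and images of small sets are small, both again following from definability and continuity of $\pi_i$ together with the fact that $D$ itself is small, so $\mc{L}_D^s=\mc{P}(D)$), and then check the sheaf condition: for $h\in\mc{DC}_M$, the composite $h\circ\pi_i$ is $\mc{M}$-definable and $(\sigma_D,\sigma)$-continuous as a composition of such maps, hence lies in $\mc{DC}_D$. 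Thus $\pi_i$ is a morphism of $\lss(M)$, completing the proof.
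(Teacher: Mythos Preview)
The paper states this result as a \emph{Fact} and offers no proof; it is evidently regarded as immediate from the definitions. Your argument is correct and supplies exactly the routine verifications one would expect: definability of the projection via its graph, continuity from the definition of the product and subspace topologies, membership in $\mc{DC}_D$ by reading off Definition~\ref{ad}(ii), and the morphism property by checking bounded continuity (trivial since both $D$ and $M$ are small, so $\mc{L}_D^s=\mc{P}(D)$ and $\mc{L}_M^s=\mc{P}(M)$) together with closure of definable continuous functions under composition. There is nothing to compare against, and nothing to correct.
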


\begin{prop}[cf. \cite{Pie3}, Prop. 3.2.13]\label{morph}
For a mapping $f=(f_1,...,f_n):E\to D$ with definable $E\subseteq M^m$ and 
$D\subseteq M^n$, the following conditions are equivalent:
\begin{enumerate}
\item[$(1)$] $f$ is a morphism of $\lss(M)$,
\item[$(2)$] $f$ is $\mc{M}$-definable and $(\sigma_E,\sigma_D)$-continuous,
\item[$(3)$] all of the coordinates of $f$ are functions from $\mc{DC}_E$.
\end{enumerate}
\end{prop}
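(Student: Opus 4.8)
The plan is to prove the cycle $(1)\Rightarrow(3)\Leftrightarrow(2)\Rightarrow(1)$, treating the equivalence $(2)\Leftrightarrow(3)$ as pure book-keeping and $(3)\Rightarrow(1)$ as the substantive step. First I would settle $(2)\Leftrightarrow(3)$. Definability of $f=(f_1,\dots,f_n)$ is equivalent to definability of each coordinate $f_i:E\to M$, since $f_i=\pi_i\circ f$ is a composition with a definable projection and, conversely, the graph of $f$ is cut out by the finite conjunction $\bigwedge_i (y_i=f_i(x))$. Likewise, because $\sigma_D$ is the subspace topology inherited from the product topology $\sigma^n$ on $M^n$, the map $f$ is $(\sigma_E,\sigma_D)$-continuous iff each $f_i$ is $(\sigma_E,\sigma)$-continuous. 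Since $E$ is small we have $E\in\mc{L}_E=\mc{L}_E^o$, so the statement ``$f_i\in\mc{DC}_E$'' means exactly that $f_i$ is an $\mc{M}$-definable $(\sigma_E,\sigma)$-continuous function on $E$; combining the two observations yields $(2)\Leftrightarrow(3)$.

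Next I would prove $(1)\Rightarrow(3)$. If $f$ is a morphism of $\lss(M)$, then by definition $f^{-1}\mc{DC}_D\subseteq\mc{DC}_E$. By Fact \ref{proj} each projection $\pi_i:D\to M$ lies in $\mc{DC}_D$, hence $f_i=\pi_i\circ f\in f^{-1}\mc{DC}_D\subseteq\mc{DC}_E$, which is precisely $(3)$.

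Finally, $(3)\Rightarrow(1)$ is the heart of the argument. Assuming $(3)$, hence $(2)$, I must verify both ingredients of a morphism in $\lss(M)$. For the underlying $\mathbf{LSS}$-morphism: since $E$ and $D$ are small, the Corollary to Theorem \ref{1ki} reduces the requirement to the inclusion $f^{-1}(\mc{L}_D)\subseteq\mc{L}_E$, and for any $\sigma_D$-open $\mc{M}$-definable $V\in\mc{L}_D$ the set $f^{-1}(V)$ is $\sigma_E$-open by continuity and $\mc{M}$-definable as the preimage of a definable set under a definable map, so $f^{-1}(V)\in\mc{L}_E$. For the sheaf condition $f^{-1}\mc{DC}_D\subseteq\mc{DC}_E$: given $h:U\to M$ in $\mc{DC}_D$ with $U\in\mc{L}_D$, the composite $h\circ f$ is defined on $f^{-1}(U)\in\mc{L}_E$, is $\mc{M}$-definable as a composition of definable maps, and is $(\sigma_{f^{-1}(U)},\sigma)$-continuous as a composition of continuous maps, so $h\circ f\in\mc{DC}_E$. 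I expect the main obstacle to be not any single deep step but the careful coordination in $(3)\Rightarrow(1)$: one must track $\mc{M}$-definability and $(\sigma_E,\sigma_D)$-continuity simultaneously through both preimages and compositions, and invoke smallness of $E$ and $D$ to identify the $\mathbf{LSS}$-morphism condition with the smop inclusion $f^{-1}(\mc{L}_D)\subseteq\mc{L}_E$.
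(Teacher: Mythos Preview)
Your proof is correct and follows essentially the same route as the paper's: the cycle $(1)\Rightarrow(3)$, $(3)\Leftrightarrow(2)$, $(2)\Rightarrow(1)$ matches the paper's $(1)\Rightarrow(3)\Rightarrow(2)\Rightarrow(1)$, with the same ingredients (Fact~\ref{proj} for the projections, definability and continuity passing through compositions and preimages). The only cosmetic difference is that in $(1)\Rightarrow(3)$ you invoke $\pi_i\in\mc{DC}_D$ directly together with $f^{-1}\mc{DC}_D\subseteq\mc{DC}_E$, whereas the paper first composes morphisms $\pi_i\circ f$ and then reads off membership in $\mc{DC}_E$ via $id_M\in\mc{DC}_M$; and you spell out the strict-continuity check via the Corollary to Theorem~\ref{1ki} where the paper simply says ``clearly strictly continuous''.
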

\begin{proof}
$(1) \Rightarrow (3)$
We have $f=(\pi_1 \circ f, ..., \pi_n \circ f)$. By Fact \ref{proj}, all 
$\pi_1,...,\pi_n:D\to M$ are morphisms of $\lss(M)$,
so all the coordinates $\pi_i \circ f \: (i=1,...,n)$ of $f$ are morphisms of $\lss(M)$. 
But $id_M \in \mc{DC}_M$.  Hence
all $\pi_i \circ f: E \to M$  $(i=1,...,n)$ belong to $\mc{DC}_{E}$.

$(3)\Rightarrow (2)$
Since all the coordinates of $f$ are $\mc{M}$-definable and $(\sigma_E,\sigma)$-continuous, the mapping $f$ is $\mc{M}$-definable and $(\sigma_E,\sigma_D)$-continuous. 

$(2) \Rightarrow (1)$
The mapping $f$ is clearly strictly continuous. Assume $h\in \mc{DC}_D$.
We check if $h\circ f\in \mc{DC}_E$. Since $f$ is $\mc{M}$-definable and $(\sigma_E,\sigma_D)$-continuous and  $h$ is $\mc{M}$-definable and $(\sigma_D,\sigma)$-continuous, $h\circ f$ is $\mc{M}$-definable and $(\sigma_E,\sigma)$-continuous and we are done.
\end{proof}

\begin{cor}\label{homeo}
For two definable sets $D\subseteq M^m$ and $E\subseteq M^n$, the following conditions are equivalent:
\begin{enumerate}
\item  $((D,\mc{L}_D), \mc{DC}_D)$ and  $((E,\mc{L}_E), \mc{DC}_E)$
are isomorphic as objects of $\lss(M)$, 
\item $D$ and $E$ are definably homeomorphic.
\end{enumerate}
\end{cor}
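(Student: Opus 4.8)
The plan is to prove Corollary \ref{homeo} directly from Proposition \ref{morph}, since an isomorphism in a category is precisely a pair of mutually inverse morphisms, and Proposition \ref{morph} has already characterized the morphisms of $\lss(M)$ between such objects as exactly the $\mc{M}$-definable $(\sigma_D,\sigma_E)$-continuous maps. So the whole statement should reduce to matching up the two notions of ``invertible morphism.''

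First I would prove the implication $(1)\Rightarrow (2)$. Suppose $((D,\mc{L}_D),\mc{DC}_D)$ and $((E,\mc{L}_E),\mc{DC}_E)$ are isomorphic in $\lss(M)$, witnessed by morphisms $f:D\to E$ and $g:E\to D$ with $g\circ f=id_D$ and $f\circ g=id_E$. By the equivalence $(1)\Leftrightarrow (2)$ in Proposition \ref{morph}, both $f$ and $g$ are $\mc{M}$-definable and continuous in the respective subspace topologies ($f$ is $(\sigma_D,\sigma_E)$-continuous and $g$ is $(\sigma_E,\sigma_D)$-continuous). Being mutually inverse set-maps, $f$ is a bijection with $f^{-1}=g$. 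A continuous bijection whose inverse is also continuous is a homeomorphism, and since both $f$ and $g$ are $\mc{M}$-definable, $f$ is an $\mc{M}$-definable homeomorphism; that is exactly the definition of $D$ and $E$ being definably homeomorphic.

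For $(2)\Rightarrow (1)$ I would run the argument backwards. Assume $D$ and $E$ are definably homeomorphic via a definable homeomorphism $f:D\to E$, so $f$ is a bijection, $f$ is $\mc{M}$-definable and $(\sigma_D,\sigma_E)$-continuous, and $f^{-1}:E\to D$ is likewise $\mc{M}$-definable and $(\sigma_E,\sigma_D)$-continuous. Applying $(2)\Rightarrow (1)$ of Proposition \ref{morph} to $f$ and to $f^{-1}$ separately shows both are morphisms of $\lss(M)$. Since $f\circ f^{-1}=id_E$ and $f^{-1}\circ f=id_D$ as set-maps, and the identities are morphisms, $f$ and $f^{-1}$ are mutually inverse in $\lss(M)$; hence $f$ is an isomorphism and the two objects are isomorphic in $\lss(M)$.

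I do not expect a genuine obstacle here: the corollary is essentially a formal consequence of Proposition \ref{morph} together with the elementary fact that ``definable homeomorphism'' unpacks as ``definable continuous bijection with definable continuous inverse,'' which is precisely ``isomorphism'' once morphisms are identified with definable continuous maps. The only point requiring a sentence of care is ensuring that the inverse of the categorical morphism is itself $\mc{M}$-definable and continuous rather than merely a set-theoretic inverse; this is handled automatically because in $\lss(M)$ an inverse morphism \emph{is} a morphism, so Proposition \ref{morph} applies to it as well. Thus the proof is just two short symmetric paragraphs invoking Proposition \ref{morph} in each direction.
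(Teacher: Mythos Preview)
Your proof is correct and matches the paper's approach: the paper states this as an immediate corollary of Proposition~\ref{morph} with no separate proof, and your argument is exactly the natural unpacking of that implication, identifying isomorphisms in $\lss(M)$ with mutually inverse $\mc{M}$-definable continuous maps.
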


\subsection{Locally definable spaces over  structures with topologies}

\begin{defi}[cf. \cite{Pie3}, Def. 3.3.1, 3.3.2, 3.4.1]
An \textbf{affine definable space over} $(\mc{M}, \sigma )$ is an object of $\lss(M)$ isomorphic to  $((D,\mc{L}_D), \mc{DC}_{D})$  for some definable subset  $D\subseteq M^n$.

A \textbf{locally definable space  over} $(\mc{M},\sigma)$ is an object of $\lss(M)$ that 
is an admissible union of some  affine definable  spaces. 
In particular, a finite open union of some affine definable spaces is called 
a \textbf{definable space over} ($\mc{M},\sigma )$.

\textbf{Morphisms of  (affine or locally) definable spaces  over} $(\mc{M},\sigma )$ are their morphisms in $\lss(M)$. We obtain full subcategories $\ads(\mc{M},\sigma)$, $\lds(\mc{M}, \sigma)$ and $\ds(\mc{M},\sigma)$  of $\lss(M)$.
\end{defi}

\begin{rem}
Definable spaces  over o-minimal structures were extensively studied by L. van den Dries (\cite{vdd}), locally semialgebraic spaces by  H. Delfs and M. Knebusch (\cite{DK}), and locally definable spaces over o-minimal expansions of fields by A. Pi\k{e}kosz (\cite{Pie1}). Especially regular paracompact locally definable spaces are interesting, because the o-minimal homotopy theory is available for them.
\end{rem}

\begin{defi}
A \textbf{locally definable set} in a locally definable space $((X,\mc{L}_X), \mc{O}_X)= \displaystyle \bigcup^a_{i\in I} ((X_i,\mc{L}_i), \mc{O}_i)$ is a subset $Y\subseteq X$  having a definable trace on each $X_i$. A \textbf{definable set} is a small locally definable set.
\end{defi}

\begin{fact}
By gluing the corresponding topological spaces $(X_i,\sigma_{X_i})$, each locally definable space $((X, \mc{L}_X), \mc{O}_X)$ has a natural topology $\sigma_X$. Then:
\begin{enumerate} 
\item[$(1)$]  $\mc{L}_X$ is the family of  all definable $\sigma_X$-open subsets of $X$, 
\item[$(2)$]  $\mc{L}^o_X$  is the family of  all locally definable $\sigma_X$-open subsets of $X$,
\item[$(3)$] $\mc{L}^s_X=\bor(\{ X_i \}_{i\in I}),$
\item[$(4)$] $\mc{L}^{wo}_X\subseteq \sigma_X$. 
\end{enumerate}
\end{fact}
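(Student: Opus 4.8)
The plan is to prove the four-part Fact describing the invariants of a locally definable space $((X,\mc{L}_X),\mc{O}_X) = \bigcup^a_{i\in I}((X_i,\mc{L}_i),\mc{O}_i)$ after gluing the topologies $\sigma_{X_i}$ into a natural topology $\sigma_X$. First I would make precise how $\sigma_X$ is constructed: each $X_i$ carries $\sigma_{X_i}$, and by the gluing condition $(\star)$ the traces of these topologies agree on overlaps $X_i\cap X_j$, so the standard topological gluing yields a well-defined $\sigma_X$ on $X=\bigcup_i X_i$ in which a set is open iff its trace on every $X_i$ is $\sigma_{X_i}$-open, and each $X_i$ is $\sigma_X$-open. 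Throughout I would exploit the description of $\mc{L}_X$ from Proposition \ref{glu}(a): members of $\mc{L}_X$ are exactly the finite unions $L_1\cup\dots\cup L_k$ with $L_j\in\mc{L}_{i_j}$.

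For $(1)$ I would show $\mc{L}_X$ equals the definable $\sigma_X$-open subsets. A smop $L=\bigcup_{j} L_j$ is a finite union of definable $\sigma_{X_{i_j}}$-open sets, hence definable (each $L_j$ has a definable trace on every $X_i$ by $(\star)$) and $\sigma_X$-open. Conversely, if $U$ is definable and $\sigma_X$-open, then its trace $U\cap X_i$ is a definable $\sigma_{X_i}$-open subset of $X_i$, so lies in $\mc{L}_i$; definability of $U$ (as a \emph{definable}, i.e. small, set) forces the trace to be nonempty for only finitely many $i$, and then $U=\bigcup_i(U\cap X_i)$ is a finite union of smops, so $U\in\mc{L}_X$. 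For $(2)$ I would use the characterization $V\in\mc{L}^o_X$ iff $V\cap_1\mc{L}_X\subseteq\mc{L}_X$; combined with $(1)$ this says $V$ is locally definable $\sigma_X$-open exactly when $V\cap L\in\mc{L}_X$ for every smop $L$, and since $\mc{L}_X$ is the family of definable $\sigma_X$-open sets, this is precisely the condition that $V$ be locally definable and $\sigma_X$-open.

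For $(3)$, recall $\mc{L}^s_X=\bor(\mc{L}_X)$. Since each $X_i$ is a smop-covered open subspace and by Proposition \ref{glu}(c) the family $\{X_i\}_{i\in I}$ is admissible, every smop $L$ is contained in a finite union $X_{i_1}\cup\dots\cup X_{i_k}$; conversely every such finite union lies in $\bor(\{X_i\})$ because each $X_i$ is itself small. Thus $\bor(\mc{L}_X)$ and $\bor(\{X_i\}_{i\in I})$ have the same downward-closed saturation, giving the claimed equality. For $(4)$, $\mc{L}^{wo}_X=\topo(\mc{L}_X)$ is generated by unions of smops; since each smop is $\sigma_X$-open and $\sigma_X$ is closed under arbitrary unions, every weakly open set is $\sigma_X$-open, i.e. $\mc{L}^{wo}_X\subseteq\sigma_X$.

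The main obstacle I expect is in $(1)$: verifying that a definable $\sigma_X$-open set meets only finitely many $X_i$ nontrivially, i.e. that definability (smallness) genuinely forces essential finiteness against the admissible family $\{X_i\}$. This is where one must invoke that a \emph{definable} (small) locally definable set is essentially finite on the admissible covering $\{X_i\}_{i\in I}$, tying the model-theoretic notion of definability to the combinatorial smallness built into the locally small structure; the remaining parts are then largely formal consequences of the characterizations of $\mc{L}^o_X$, $\mc{L}^s_X$ and $\mc{L}^{wo}_X$ established earlier in the paper.
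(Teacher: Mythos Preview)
The paper states this result as a \emph{Fact} without proof, so there is nothing to compare your argument against directly. Your outline is essentially correct and proceeds along the natural route via Proposition~\ref{glu} and the definitions of $\mc{L}^o_X$, $\mc{L}^s_X$, $\mc{L}^{wo}_X$.

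One imprecision is worth flagging in part~(1): you assert that a definable (i.e.\ small) $\sigma_X$-open set $U$ has nonempty trace on only finitely many $X_i$. That is not what smallness gives, and it need not be true when the charts overlap heavily. What smallness does give is that $U$ is contained in some smop $L=L_1\cup\dots\cup L_k$ with $L_j\in\mc{L}_{i_j}$, hence $U\subseteq X_{i_1}\cup\dots\cup X_{i_k}$. Then $U=\bigcup_{j=1}^{k}(U\cap X_{i_j})$, each $U\cap X_{i_j}$ is definable and $\sigma_{X_{i_j}}$-open, so lies in $\mc{L}_{i_j}$, and thus $U\in\mc{L}_X$. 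Your final paragraph (``essentially finite on the admissible covering $\{X_i\}$'') is in fact the correct formulation; the earlier sentence should be adjusted to match it.

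A second tacit point you rely on in (2) and (3) is that each affine chart satisfies $X_i\in\mc{L}_i\subseteq\mc{L}_X$, i.e.\ the affine pieces are small. This holds because $X_i$ is definable and $\sigma_{X_i}$-open in itself, hence a smop of $(X_i,\mc{L}_i)$; it is worth making this explicit, since it is what lets you intersect with $X_i$ in the converse direction of (2) and what makes $\bor(\{X_i\}_{i\in I})\subseteq\bor(\mc{L}_X)$ immediate in (3).
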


As in Proposition \ref{morph}, we can characterize the morphisms between definable spaces.
From Corollary \ref{homeo}, we get
\begin{cor} If two definable spaces over $(\mc{M}, \sigma )$
are isomorphic in $\lss(M)$, then they are definably homeomorphic.
\end{cor}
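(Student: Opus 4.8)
The plan is to reduce the statement to Corollary~\ref{homeo}, which already handles the affine case. The key observation is that a definable space over $(\mc{M},\sigma)$ is, by definition, a \emph{finite} open union of affine definable spaces. So I would first write two isomorphic definable spaces as finite open unions
\[ ((X,\mc{L}_X),\mc{O}_X)=\bigcup^a_{i=1}^k ((D_i,\mc{L}_{D_i}),\mc{DC}_{D_i}), \qquad
   ((Y,\mc{L}_Y),\mc{O}_Y)=\bigcup^a_{j=1}^l ((E_j,\mc{L}_{E_j}),\mc{DC}_{E_j}), \]
where each $D_i\subseteq M^{m_i}$ and each $E_j\subseteq M^{n_j}$ is definable. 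By Fact~3.22 each of these carries its natural topology $\sigma_X$, $\sigma_Y$, obtained by gluing the $\sigma_{D_i}$ and $\sigma_{E_j}$. The goal is to promote the given $\lss(M)$-isomorphism $\varphi:(X,\mc{L}_X)\to(Y,\mc{L}_Y)$ (together with $\varphi^{-1}$) to a definable homeomorphism for these natural topologies.

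Next I would work locally on the finite charts. For the affine pieces, Proposition~\ref{morph} tells us exactly what a morphism of $\lss(M)$ between definable subsets of powers of $M$ is: an $\mc{M}$-definable, $(\sigma_E,\sigma_D)$-continuous map. So the strategy is to restrict $\varphi$ to the (finitely many) intersections $\varphi^{-1}(E_j)\cap D_i$, which are open in the affine chart $D_i$ and hence open subspaces, and to observe that on each such piece $\varphi$ corepresents as a tuple of coordinate functions lying in the appropriate structure sheaf. Since there are only finitely many charts on each side, the map $\varphi$ is determined by finitely many affine-to-affine morphisms. Applying Corollary~\ref{homeo} (or directly Proposition~\ref{morph}) chart-by-chart yields that each restriction is $\mc{M}$-definable and continuous for the trace topologies, and the same for $\varphi^{-1}$.

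Finally I would glue. Because $\sigma_X$ is by construction the gluing of the $\sigma_{D_i}$ and $\sigma_Y$ the gluing of the $\sigma_{E_j}$, a map is $(\sigma_X,\sigma_Y)$-continuous iff its restriction to each chart is continuous into the target; and definability is likewise checked chartwise, since a locally definable map with finitely many charts on a definable (hence small) space is definable. Thus $\varphi$ and $\varphi^{-1}$ assemble into mutually inverse $\mc{M}$-definable $(\sigma_X,\sigma_Y)$-continuous bijections, i.e.\ a definable homeomorphism.

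The main obstacle I anticipate is the bookkeeping of the natural topology and its interaction with the gluing: one must verify that the $\sigma_X$ produced by gluing is independent of the chosen affine presentation and that the finitely many chart overlaps genuinely reduce continuity and definability of $\varphi$ to the affine situation covered by Corollary~\ref{homeo}. Once the finiteness of the open cover is used to pass from ``locally definable and continuous on each chart'' to ``definable and continuous globally,'' the remaining steps are the routine chartwise application of Proposition~\ref{morph}.
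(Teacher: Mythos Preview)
Your proposal is correct and is essentially the approach the paper intends. The paper gives no detailed argument at all: it simply writes ``As in Proposition~\ref{morph}, we can characterize the morphisms between definable spaces. From Corollary~\ref{homeo}, we get\ldots'' and states the corollary. Your chartwise reduction to the affine case via Proposition~\ref{morph}, using finiteness of the open cover to pass from piecewise to global definability and continuity, is precisely the natural way to unpack this gesture; the bookkeeping you flag about the glued topology $\sigma_X$ is exactly what is hidden in the paper's one-line deduction.
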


\begin{fact}[cf. \cite{DK}, Prop. I.1.3 and \cite{Pie3}, Fact 3.4.3]
Assume two objects $((X,\mc{L}_X), \mc{O}_X)$ and $((Y,\mc{L}_Y), \mc{O}_Y)$ of $\lds(\mc{M},\sigma)$ are given.
 A mapping $f:X\to Y$   is a morphism of $\lds(\mc{M},\sigma)$ iff the
following conditions are satisfied:
\begin{enumerate}
\item[a)] $f$ is bounded (i. e. $f(\mc{L}_X)$ is a refinement of $\mc{L}_Y$),
\item[b)] $f$ is $(\sigma_X,\sigma_Y)$-continuous,
\item[c)] $f$ is piecewise definable
(i. e. if $U\in \mc{L}_X$ and $V\in \mc{L}_Y$ is such that $f(U)\subseteq V$, then
the restriction  $f|^V_U:U\to V$ is $\mc{M}$-definable).
\end{enumerate}
\end{fact}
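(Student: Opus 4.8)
The plan is to characterize morphisms of $\lds(\mc{M},\sigma)$ by reducing to the local, affine situation via the admissible unions defining the two spaces, and then invoking Proposition \ref{morph}. Write $((X,\mc{L}_X),\mc{O}_X)=\bigcup^a_{i\in I}((X_i,\mc{L}_i),\mc{O}_i)$ and $((Y,\mc{L}_Y),\mc{O}_Y)=\bigcup^a_{j\in J}((Y_j,\mc{L}_j),\mc{O}_j)$ with each $X_i,Y_j$ affine definable. By Proposition \ref{glu}, each $X_i$ is an open subspace of $X$ and the family $\{X_i\}$ is admissible; the same holds for $\{Y_j\}$. The basic observation is that a morphism of $\lss(M)$ is strictly continuous and respects the structure sheaves, and by Theorem \ref{1ki} strict continuity is the same as bounded continuity, which on a locally small space is a local condition on members of $\mc{L}_X$. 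So I first reduce the global statement to the affine pieces and then apply the already-proved affine characterization.

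First I would prove the forward direction. Suppose $f:X\to Y$ is a morphism of $\lss(M)$. Boundedness (a) is immediate: a bounded continuous map sends smops into smops up to refinement, i.e. $f(\mc{L}_X)$ refines $\mc{L}_Y$, which is exactly the definition recalled in Lemma \ref{bcsc} and the surrounding propositions. For (b), continuity with respect to the natural topologies $\sigma_X,\sigma_Y$ follows because $f$ is weakly continuous (every continuous map is weakly continuous) and, by the preceding Fact, $\mc{L}^{wo}_X\subseteq\sigma_X$ while the $\sigma$-open definable sets generate $\mc{L}_Y$; more precisely, I restrict to an affine piece $X_i$, where $Y$ is covered admissibly by the $Y_j$, and use that on the affine pieces $(\sigma_X,\sigma_Y)$-continuity coincides with the $\lss(M)$-morphism condition by Proposition \ref{morph}. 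For (c), piecewise definability, take $U\in\mc{L}_X$ and $V\in\mc{L}_Y$ with $f(U)\subseteq V$; since $U$ is small it meets only finitely many $X_i$ and $V$ only finitely many $Y_j$, so the restriction $f|^V_U$ is covered by finitely many affine-to-affine restrictions, each $\mc{M}$-definable by the equivalence $(1)\Leftrightarrow(2)$ in Proposition \ref{morph}, and a finite gluing of $\mc{M}$-definable maps is $\mc{M}$-definable.

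Conversely, assume (a), (b), (c). The goal is to show $f$ is a morphism of $\lss(M)$, i.e.\ strictly continuous and satisfying $f^{-1}\mc{O}_Y\subseteq\mc{O}_X$. Strict continuity, equivalently bounded continuity by Lemma \ref{bcsc}, I obtain from (a) together with the fact that $f^{-1}(\mc{L}^o_Y)\subseteq\mc{L}^o_X$: given $V\in\mc{L}^o_Y$ and $W\in\mc{L}_X$, boundedness produces $U\in\mc{L}_Y$ with $f(W)\subseteq U$, and then $f^{-1}(V)\cap W=f^{-1}(V\cap U)\cap W$, where the restriction $f|^U_W$ is $\mc{M}$-definable and $(\sigma_W,\sigma_U)$-continuous by (b) and (c), so the preimage of the smop $V\cap U$ is a definable $\sigma_W$-open subset of $W$, hence a smop; this is exactly the bounded-continuous criterion $(bc)$. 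For the sheaf condition, I check $h\circ f\in\mc{O}_X$ for $h\in\mc{O}_Y$ by working locally: restrict to each $X_i$ and each affine piece of $Y$, where $h\circ f$ is $\mc{M}$-definable and $(\sigma,\sigma)$-continuous by (b), (c) and the composition argument in Proposition \ref{morph}, then glue over the admissible cover $\{X_i\}$ using that $\mc{O}_X$ is a function sheaf closed under gluings on locally essentially finite open families.

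The main obstacle will be the bookkeeping in the converse direction: translating the purely local hypotheses (b) and (c) into the global sheaf-theoretic morphism condition requires carefully invoking admissibility of the covers $\{X_i\}$ and $\{Y_j\}$ so that the relevant restrictions are finite, and then using the gluing axiom for $\mc{O}_X$. The affine case, Proposition \ref{morph}, does all the genuine model-theoretic work (the equivalence of $\lss(M)$-morphisms with definable continuous maps), so the present proof is essentially a localization-and-gluing argument layered on top of it, and the delicate point is ensuring that piecewise definability (c) plus boundedness (a) exactly captures the difference between the affine and locally definable settings.
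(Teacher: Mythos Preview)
The paper does not supply its own proof of this Fact; it is stated with external references to \cite{DK} and \cite{Pie3} and left unproved. So there is nothing to compare against, and your proposal should be judged on its own merits.

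Your overall strategy---localize to the affine pieces via the admissible coverings furnished by Proposition \ref{glu}, and then invoke the affine characterization Proposition \ref{morph}---is the natural and correct one, and your converse direction (deriving bounded continuity from (a)+(b)+(c) via the criterion $(bc)$, then gluing the sheaf condition over the admissible cover) is sound.

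Two imprecisions are worth flagging. First, in the forward direction for (c) you write that a smop $U$ ``meets only finitely many $X_i$''. Admissibility of $\{X_i\}$ does not give this (that would be local finiteness); it only gives that $U$ is \emph{covered} by finitely many $X_i$, which is what Proposition \ref{glu}(a) actually says and is all you need. Second, your first attempt at (b) in the forward direction---deducing $(\sigma_X,\sigma_Y)$-continuity from weak continuity and the inclusion $\mc{L}^{wo}_X\subseteq\sigma_X$---does not work as stated, because $\sigma_Y$ can be strictly larger than $\mc{L}^{wo}_Y$, so controlling preimages of $\mc{L}^{wo}_Y$-open sets is not enough. You immediately correct this by passing to affine pieces and using Proposition \ref{morph}; that localization is where the genuine argument for (b) lives, and the preliminary remark about weak continuity should simply be dropped.
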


\begin{defi}[cf. \cite{Pie3}, Def. 3.3.7 and 3.4.4] 
Assume a locally definable space $((X,\mc{L}_X),\mc{O}_X)$ over $(\mc{M}, \sigma )$ is given as  the admissible union of some
affine definable spaces $(({X}_i, \mc{L}_{i}),\mc{O}_{X_i}), \: i\in I$. If $Y\subseteq X$ is {locally definable}, then it induces a  \textbf{subspace} $((Y,\mc{L}_Y), \mc{O}_Y)$ of  $((X,\mc{L}_X),\mc{O}_X)$  {in} $\lds(\mc{M},\sigma)$, given by the corresponding admissible union of 
$(({Y}_i,\mc{L}_{i}\cap_1 Y),\mc{O}_{Y_i})$, where $Y_i=X_i\cap Y, \: i\in I$.
\end{defi}

\begin{fact}[cf. \cite{Pie3}, Thm. 3.3.14]
For each structure with a topology $(\mc{M},\sigma)$, the categories 
$\ads(\mc{M},\sigma)$ and $\ds(\mc{M},\sigma)$ are concretely finitely complete.
\end{fact}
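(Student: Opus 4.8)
The plan is to establish concrete finite completeness of each category separately, since the two cases differ in how the terminal object and finite products behave. Recall that a concrete category is finitely complete when it has a terminal object and binary products (equalizers then come for free, or are checked directly), and "concretely" means the forgetful functor to $\set$ preserves these limits — so the underlying set of a product is the Cartesian product, and the underlying set of an equalizer is the set-theoretic equalizer. For $\ads(\mc{M},\sigma)$ the terminal object is a single point $(\{*\},\mc{L},\mc{DC})$ with the obvious structures, which is affine (it is definably homeomorphic to a point in $M^0$ or a singleton in $M^1$). The crux is the binary product: for two affine definable spaces represented as $((D,\mc{L}_D),\mc{DC}_D)$ with $D\subseteq M^m$ and $((E,\mc{L}_E),\mc{DC}_E)$ with $E\subseteq M^n$, I would take $D\times E\subseteq M^{m+n}$ with its induced family of smops $\mc{L}_{D\times E}$ and structure sheaf $\mc{DC}_{D\times E}$, and verify that this carries the categorical product.

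First I would check the universal property using Proposition \ref{morph}. A cone over the product diagram from a definable space $((F,\mc{L}_F),\mc{DC}_F)$ consists of morphisms $g\colon F\to D$ and $h\colon F\to E$; by Proposition \ref{morph}(2), each is $\mc{M}$-definable and continuous for the respective induced topologies. The mediating map $\langle g,h\rangle\colon F\to D\times E$ is then $\mc{M}$-definable (definability is preserved under pairing of definable functions) and $(\sigma_F,\sigma_{D\times E})$-continuous, because $\sigma_{D\times E}$ is the subspace topology from $\sigma^{m+n}=\sigma^m\times\sigma^n$, so continuity into a product is equivalent to continuity of each coordinate block. Applying Proposition \ref{morph}(2)$\Rightarrow$(1) again gives that $\langle g,h\rangle$ is a morphism of $\lss(M)$, hence of $\ads(\mc{M},\sigma)$, and uniqueness is forced set-theoretically. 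The projections $D\times E\to D$ and $D\times E\to E$ are morphisms by Fact \ref{proj} applied to the coordinate projections. Since the underlying set of $D\times E$ is the genuine Cartesian product and the projections are the set projections, the product is concrete.

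Next I would handle equalizers in $\ads(\mc{M},\sigma)$. Given parallel morphisms $f,g\colon D\to E$, the equalizer set is $Z=\{x\in D: f(x)=g(x)\}$, which is $\mc{M}$-definable (an equality of definable functions) and a subset of $D$; I would equip it with the induced object structure $((Z,\mc{L}_D\cap_1 Z),\mc{DC}_Z)$ via the subspace construction of Definition \ref{ad} (equivalently, as the affine definable space attached to $Z\subseteq M^m$). One must confirm $Z$ is itself affine definable — it is, being a definable subset of $M^m$ — and that the inclusion equalizes $f,g$ with the correct universal property, which again reduces through Proposition \ref{morph} to the fact that any morphism into $D$ equalizing $f,g$ factors set-theoretically through $Z$ and that the factorization remains $\mc{M}$-definable and continuous. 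Concreteness is immediate since $Z$ is the set-theoretic equalizer.

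Finally I would transfer everything to $\ds(\mc{M},\sigma)$, the category of finite open unions of affine definable spaces. The terminal object is still a point. For finite products, if $X=\bigcup^a_{i} X^{(i)}$ and $Y=\bigcup^a_{j} Y^{(j)}$ are finite open unions of affine pieces, then $X\times Y$ should be the finite open union of the affine products $X^{(i)}\times Y^{(j)}$, glued along the open subspaces induced by their overlaps; here I would invoke Proposition \ref{glu} to see that this admissible union is well defined (the compatibility condition $(\star)$ holds because the pieces overlap in open subspaces inherited from $X$ and $Y$), and that the resulting object is again a definable space because the index set is finite. The main obstacle — and the step needing the most care — is precisely this gluing: one must check that the product object built piecewise, together with the two projections, satisfies the universal property globally, i.e. that a cone out of an arbitrary definable space restricts compatibly to each affine product piece and reassembles (by the sheaf-gluing axiom for $\mc{O}_X$ and the admissibility from Proposition \ref{glu}(c)) into a single mediating morphism. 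Equalizers in $\ds(\mc{M},\sigma)$ are then obtained by taking the equalizer on each affine chart and gluing, using that the equalizer commutes with the open restrictions. Since all constructions carry the Cartesian-product and set-equalizer underlying sets, both categories are concretely finitely complete.
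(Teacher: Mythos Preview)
The paper itself offers no proof of this statement; it is recorded as a Fact with a reference to \cite{Pie3}, Theorem~3.3.14, so there is nothing in the present paper to compare your argument against. Your outline is the natural one and is essentially correct: for $\ads(\mc{M},\sigma)$ the product of affine pieces $D\subseteq M^m$ and $E\subseteq M^n$ is the affine space on $D\times E\subseteq M^{m+n}$, the equalizer of $f,g\colon D\to E$ is the affine space on the definable set $\{x\in D:f(x)=g(x)\}$, and in both cases Proposition~\ref{morph} reduces the universal property to $\mc{M}$-definability and $(\sigma_{\bullet},\sigma_{\bullet})$-continuity, which are stable under pairing and under restriction to a definable subset. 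The passage to $\ds(\mc{M},\sigma)$ by gluing the finitely many affine charts $X^{(i)}\times Y^{(j)}$ via Proposition~\ref{glu} is also sound, and the equalizer in $\ds$ is the definable subspace $\{x\in X:f(x)=g(x)\}$ with its induced structure.

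One small correction: your parenthetical remark that equalizers ``come for free'' from a terminal object and binary products is false in general; finite completeness requires terminal object, binary products, \emph{and} equalizers as independent ingredients. Since you go on to check equalizers explicitly, the mathematics is unaffected, but the phrasing should be dropped.
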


\begin{fact}[cf. \cite{Pie3}, Fact 3.4.10 and Thm. 3.4.11]
For each structure with a topology $(\mc{M},\sigma)$, the category $\lds(\mc{M},\sigma)$ has concrete finite limits and concrete coproducts.
\end{fact}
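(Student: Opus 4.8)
The plan is to establish concrete finite limits by exhibiting a concrete terminal object, concrete binary products, and concrete equalizers (these three generate all finite limits), and to treat coproducts separately as disjoint admissible unions. The decisive reduction throughout is that every morphism of $\lss$ is bounded, hence carries small sets to small sets; since each affine piece $X_i$ of a locally definable space $X=\bigcup^a_{i\in I}X_i$ is small, its image under any morphism is small and therefore, by Proposition \ref{glu}(a), contained in a finite union of affine pieces of the target. This lets me reduce every local computation to the case of a definable space, where I invoke the previous Fact that $\ds(\mc{M},\sigma)$ is concretely finitely complete.

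The terminal object is the affine definable space carried by a definable singleton $\{a\}\subseteq M$; it lies in $\lds(\mc{M},\sigma)$ and its carrier is a point, so it is concrete. For the binary product of $X=\bigcup^a_{i\in I}X_i$ and $Y=\bigcup^a_{j\in J}Y_j$, I set $X\times Y=\bigcup^a_{(i,j)\in I\times J}(X_i\times Y_j)$, where $X_i\times Y_j$ is the concrete product in $\ads(\mc{M},\sigma)$, carried by the definable set $X_i\times Y_j\subseteq M^{m+n}$. I first verify the gluing condition $(\star )$: the overlap of two pieces is $(X_i\cap X_{i'})\times(Y_j\cap Y_{j'})$, which is open in each factor, and both the smop families and the structure sheaves induced on this overlap are compatible because they already agree on $X$ and on $Y$ separately. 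The carrier is the Cartesian product $X\times Y$, so the projections are morphisms and the construction is concrete. For the universal property, given $Z=\bigcup^a_{k}Z_k$ with morphisms into $X$ and $Y$, I form the pairing set-theoretically; on each small piece $Z_k$ the two images land in finite unions of affine pieces, so by the product property in $\ds(\mc{M},\sigma)$ the pairing restricted to $Z_k$ is a morphism, and these restrictions glue to a morphism $Z\to X\times Y$ because $\{Z_k\}$ is admissible.

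For the equalizer of $f,g:X\to Y$ I take $E=\{x\in X: f(x)=g(x)\}$ with the induced subspace structure. On each affine piece $X_i$, boundedness forces $f(X_i)\cup g(X_i)$ to be small, hence contained in a finite union $Y'$ of affine pieces of $Y$; thus $f|_{X_i},g|_{X_i}$ factor through the definable space $Y'$, and by the concrete equalizers in $\ds(\mc{M},\sigma)$ the trace $E\cap X_i$ is a definable subspace of $X_i$. Therefore $E$ is a locally definable subset of $X$, the admissible union $E=\bigcup^a_{i\in I}(E\cap X_i)$ is an object of $\lds(\mc{M},\sigma)$, and its inclusion equalizes $f$ and $g$; the carrier is the set-theoretic equalizer, giving concreteness, and the universal property is checked piecewise and glued as above. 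Together with the terminal object and binary products, this yields all concrete finite limits.

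Finally, for a family $\{X_\alpha\}_{\alpha\in A}$ with $X_\alpha=\bigcup^a_{i\in I_\alpha}X_{\alpha,i}$, the coproduct is the disjoint admissible union $\coprod_{\alpha}X_\alpha=\bigcup^a_{(\alpha,i)}X_{\alpha,i}$ on the disjoint union of the carriers; the condition $(\star )$ holds trivially between pieces with different $\alpha$ (empty overlap) and is inherited within a fixed $\alpha$, so this is a locally definable space with the evident injections and universal property, concrete since its carrier is the set-theoretic disjoint union. The main obstacle is the equalizer step: one must confirm that the trace $E\cap X_i$ is genuinely \emph{definable} in $X_i$, not merely locally definable, which is precisely where the boundedness of morphisms and the reduction to $\ds(\mc{M},\sigma)$ are indispensable; verifying $(\star )$ for products is routine by comparison, and coproducts present no real difficulty.
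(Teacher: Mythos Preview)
The paper does not supply its own proof of this Fact; it is stated with a bare citation to \cite{Pie3}, Fact 3.4.10 and Thm.\ 3.4.11, and no argument follows in the text. So there is nothing in the paper to compare your proposal against directly.

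Judged on its own terms, your argument is sound and is the standard way one proves such a statement: reduce terminal object, binary products, and equalizers to the affine/definable case via boundedness (morphisms send smops into finite unions of affine pieces, Proposition~\ref{glu}(a)), invoke the previous Fact that $\ds(\mc{M},\sigma)$ is concretely finitely complete, and glue along the admissible covering; coproducts are disjoint admissible unions. Your identification of the only genuinely delicate step---that $E\cap X_i$ is \emph{definable}, secured by factoring $f|_{X_i},g|_{X_i}$ through a finite open union $Y'$ of affine pieces---is exactly right. One small point to tighten: when you write ``$X_i\times Y_j\subseteq M^{m+n}$'' you are implicitly working up to the isomorphism in the definition of an affine definable space; strictly the $X_i$ live in $X$, not in $M^m$, so the product piece should be taken as the concrete product in $\ads(\mc{M},\sigma)$ (which the previous Fact guarantees is affine) rather than literally as a subset of $M^{m+n}$. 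With that cosmetic fix the argument is complete, and it is in the same spirit as the proofs in \cite{Pie3} that the paper is citing.
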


\begin{rem}[cf. \cite{Pie3}, Subsections 3.3 and 3.4] The following facts are well known:
\begin{enumerate}
\item[1)] Concrete finite coproducts exist in $\ds(\mc{M},\sigma)$.
\item[2)] Often also concrete finite coproducts exist in $\ads(\mc{M},\sigma)$, but not always (as in the case of $M$ a singleton).
\item[3)] Even finite products in $\lss(M)$ of affine definable spaces may  not be affine definable spaces.
\item[4)]  A coequalizer  even  in $\ads(\mc{M},\sigma)$ may  not be  concrete.
\end{enumerate}
\end{rem}

\begin{rem}
All of the results from the monograph \cite{DK} about locally semialgebraic spaces and from the paper \cite{Pie1} about locally definable spaces over o-minimal expansions of fields hold true when the definition of a generalized topological space by H. Delfs and M. Knebusch (with axioms i) --- viii))  is replaced with Definition \ref{lss} of a locally small space.
\end{rem}

\textbf{Acknowledgment.}
I thank Krzysztof Nowak for turning my attention to the monograph \cite{BGR}.


\end{document}